\def\Bbb{\mathbb}
\def\Cal{\mathcal}
\def\Dt{\partial_t}
\def\eb{\varepsilon}
\def\R {\mathbb{R}}
\def\<{\left<}
\def\>{\right>}
\def\Nx{\nabla_x}
\def\Dx{\Delta_x}
\def\({\left(}
\def\){\right)}
\def\divv{\operatorname{div}}
\newtheorem{proposition}{Proposition}[section]
\newtheorem{theorem}[proposition]{Theorem}
\newtheorem{corollary}[proposition]{Corollary}
\newtheorem{lemma}[proposition]{Lemma}
\theoremstyle{definition}
\newtheorem{definition}[proposition]{Definition}
\newtheorem{remark}[proposition]{Remark}
\newtheorem{example}[proposition]{Example}
\numberwithin{equation}{section}
\def \no#1#2#3 {{\bf #1} (#3), #2.}
\def \eds#1#2#3 {#1, #2, #3.}
\title[Inertial manifolds and spatial averaging]
{Inertial manifolds via  spatial averaging revisited}
\author[A. Kostianko, X. Li, C. Sun and S. Zelik] {Anna Kostianko${}^{\dag,\ddag}$, Xinhua Li${}^\dag$, Chunyou Sun${}^\dag$, and Sergey Zelik${}^{\dag,\ddag}$}
\begin{document}
\begin{abstract} The paper gives a comprehensive study of inertial manifolds for semilinear
parabolic equations and their smoothness using the spatial averaging method suggested by G. Sell and
 J. Mallet-Paret. We present a universal approach which covers the most part of known results obtained
  via this method as well as gives a number of new ones. Among our applications are
   reaction-diffusion equations, various types of generalized  Cahn-Hilliard equations, including fractional
    and 6th order Cahn-Hilliard equations and several classes of modified Navier-Stokes equations including
     the Leray-$\alpha$ regularization, hyperviscous regularization and their combinations. All of the results are obtained in 3D case with
      periodic boundary conditions.
\end{abstract}

\address{${}^\dag$ \phantom{e}School of Mathematics and Statistics, Lanzhou University, Lanzhou  \\ 730000,
P.R. China}
\email{xhli2014@lzu.edu.cn (X. Li), sunchy@lzu.edu.cn (C. Sun)}
\address{${}^\ddag$ University of Surrey, Department of Mathematics, Guildford, GU2 7XH, United Kingdom.}
\email{aNNa.kostianko@surrey.ac.uk (A. Kostianko), s.zelik@surrey.ac.uk (S. Zelik)}

\keywords{Inertial manifolds, spatial averaging, reaction-diffusion equation, Cahn-Hilliard equation,
hyperviscous Navier-Stokes equation, Leray-$\alpha$ model}

\subjclass[2010]{35B33, 35B40, 35B42, 35Q30, 76F20}
\thanks{ This work is partially supported by the grant 19-71-30004 of RSF,
the EPSRC grant EP/P024920/1 and NSFC grant No. 11522109, 11871169.}
\date{May 25, 2020}
\maketitle

\tableofcontents
\section{Introduction}\label{sec1}

It is believed that in many cases the longtime behavior of trajectories of a dissipative system,
say, generated by a partial differential equation (PDE) is essentially finite-dimensional. In other words,
despite of the  infinite-dimensionality of the initial  phase space, the generated dynamics is governed,
up to some "non-essential" transient effects, by finitely many parameters, the so-called {\it order}
parameters in the terminology of I. Prigogine, see \cite{Pri77}. Ideally, it is expected that these
order parameters obey a system
of ordinary differential equations (ODEs) which is called an inertial form (IF) of the initial
dissipative system. Thus, the IF if it exists allows us to reduce the study of the
dynamics generated by PDEs to the study of the corresponding system of ODEs  which in turn can be done
 using the methods of classical dynamics. In particular, the dream to understand the nature of turbulence
  using the ideas and methods of classical dynamics permanently inspires the development of the dynamical
  theory of dissipative systems during the last 50 years, see \cite{BV92,CV02,Fef06,FP67,F95,Lions1969,R01,T95,T97} and references therein.
We only mention here that the key concepts of the theory like {\it inertial} form or
 {\it inertial} manifold were initially related with the so-called  {\it inertial} scale in the theory of
  turbulence and the corresponding {\it inertial} term in Navier-Stokes equations.
\par
However, despite a lot of progress done by prominent researches, the nature of the above mentioned
finite-dimensional reduction and its rigorous justification somehow remains a mystery. Moreover, as recent examples
 and counterexamples show, there are deep obstacles to effective realization of this program, e.g., related
  with the {\it smoothness} of the IF and related finite-dimensional reduction, see \cite{Z14}
  and references therein.
\par
Indeed, the most popular way to justify this finite-dimensional reduction is related with the
 theory of attractors. By definition, a global attractor of a dynamical system (DS) is a compact
  invariant set in the phase space which attracts the images of bounded sets as time tends to infinity.
The main achievement of the attractors theory is that a global attractor $\Cal A$ exists under rather weak
 assumptions on a dissipative system considered and in many cases has
finite Hausdorff and box-counting dimensions, see  \cite{BV92,CV02,R01,T97,MZ08} and references
 therein. The class of
 such systems includes reaction-diffusion and 2D Navier-Stokes systems, pattern formation equations
 (like Cahn-Hilliard or Swift-Hohenberg ones),
  damped wave equations and many others. This result in turn allows us to build up the desired
  finite-dimensional
   reduction as well as the IF using the Man\'e projection theorem, see \cite{R11}
    and references therein. In this approach the box-counting dimension of the attractor $\Cal A$ is usually
     interpreted as a number of "degrees of freedom" in the reduced IF. In particular, this explains the
      permanent interest to various upper and lower bounds for the box-counting dimension of $\Cal A$.
      \par
      On the other hand, the obtained in such a way IF is only H\"older continuous and it can be not
       even Lipschitz continuous in general. In a fact, there are natural examples where the box-counting
        dimension
        of the attractor is low (e.g, 3), but a Lipschitz IF does not exist. Moreover, the reduced
         dynamics on the
         attractor contains features which can hardly be interpreted as "finite-dimensional" (like limit
         cycles with super-exponential rate of attraction, traveling waves in Fourier space, etc.),
         see \cite{EKZ13,MPSS93,Rom00,KZ18,Z14} for more details. In these cases, the "finite-dimensionality" obtained
          via Man\'e projections looks artificial and controversial and it seems more natural to
           accept that the dynamics here is infinite-dimensional despite the finiteness
            of box-counting dimension.
\par
The above mentioned problems motivate an increasing interest to alternative
methods of constructing IFs, not related with box-counting dimension and Man\'e projection theorem. One of
 the most natural alternative approaches is based on the concept of an {\it inertial}
  manifold (IM) suggested in \cite{FST88}. Roughly speaking, an IM $\Cal M$ is a sufficiently smooth
   (at least Lipschitz)
   finite-dimensional invariant submanifold of the phase space which is normally-hyperbolic
   and exponentially stable. If such an object exists, then the finite-dimensional reduction is
    ideally justified. Indeed, the reduction of the initial PDE to the manifold $\Cal M$ gives us the
     desired IF and the normal hyperbolicity gives us the so-called asymptotic phase or exponential tracking
     property which in turn gives us a nice rigorous interpretation in what sense the
      transient features are "non-essential".
\par
However, being a sort of a center manifold, an IM requires strong separation of the phase space on slow
 and fast variables which is usually stated in the form of {\it spectral gap} conditions or/and
 invariant cone properties, see \cite{CL02,CFNT89,FST88,FNST88,Fen72,MS89} and references therein
 for more details. In particular,
 for the simplest model of a semilinear parabolic equation in a real Hilbert space $H$:
 \begin{equation}\label{0.rde}
\Dt u+Au+F(u)=0,
 \end{equation}
where $A: D(A)\to H$ is a positive self-adjoint operator with compact inverse and $F:H\to H$ is a globally
Lipschitz map with Lipschitz constant $L$, the spectral gap conditions for existence of $N$-dimensional
 IM read:
\begin{equation}\label{0.sg0}
\lambda_{N+1}-\lambda_N>2L,
\end{equation}
where $\{\lambda_n\}_{n=1}^\infty$ are the eigenvalues of the operator $A$
enumerated in a non-decreasing order. In the present paper we are mainly interested in a more
complicated version of the abstract parabolic problem, namely,
\begin{equation}\label{0.rdeg}
\Dt A^{-\gamma}u+Au+F(u)=0,
\end{equation}
where $\gamma\ge0$ and $A$ and $F$ are the same as in \eqref{0.rde}. The spectral gap conditions
 for this equation read:
\begin{equation}
\frac{\lambda_{N+1}^{1+\gamma}-\lambda_N^{1+\gamma}}{\lambda_{N+1}^\gamma+\lambda_N^\gamma}>L.
\end{equation}
It is known that these spectral gap conditions are sharp in the sense that if they are not satisfied
 one always can construct a nonlinearity $F$ for which the corresponding IM will not exist,
  see \cite{EKZ13,M91,R94,Rom00,Z14} for more details. Thus, there is no hope to push forward the
   theory beyond
  the spectral gap conditions at least on the level of general abstract nonlinearities. However, this is
   possible for some partial classes of operators $A$ and nonlinearities $F$ (see \cite{M-PS88} and \cite{KZ15}
    for 3D reaction-diffusion and Cahn-Hilliard equations with periodic boundary conditions,
    \cite{KZ18,KZ17} for 1D
     reaction-diffusion-advection problems, \cite{K18,LS20,GG18} for modified Navier-Stokes equation
      and \cite{K20}
     for the complex Ginzburg-Landau equation).
\par
In the present paper, we are mainly interested in the so-called spatial averaging method which
 has been introduced in \cite{M-PS88} in order to verify the existence of an IM for 3D scalar
  reaction-diffusion equation with periodic boundary conditions, see also \cite{Z14} for more
   recent exposition of the theory and \cite{Kwe99} for slightly different boundary conditions. Roughly speaking, the method works in the case where the
   derivative $F'(u)$ contains {\it point-wise multiplication} and utilizes some special features of such
   multiplication operators which comes from harmonic analysis and number theory, see \cite{M-PS88,Z14} and
    Section \ref{s5} below for more details. These features allow us to replace in the analysis the
     multiplication on a function by the {\it scalar} operator of multiplication
      on its {\it spatial average} (which explains the name of the method). This trick essentially
       simplifies the analysis and allows us to go beyond of spectral gap conditions at least
        in the case of 3D problems with periodic boundary conditions.  Note also that in general
        this method does not work for systems since we will have not a scalar operator,
         but matrix operator instead and this is not enough for IMs, so some further steps
          are necessary, see \cite{K20} for the case of complex Ginzburg-Landau equation where the
           spatial averaging is combined with the temporal one in order
           to get finally the scalar operator. But there is an important exception pointed
            out in \cite{K18}, namely, the case of zero spatial averaging which is typical
            for the Navier-Stokes type nonlinearities and which allowed to treat the modified
             Navier-Stokes equations using the spatial averaging method, see also \cite{GG18,LS20}.
\par
The aim of the present paper is to give a systematic study of IMs via the spatial averaging based
 on the universal model \eqref{0.rdeg} which allows to treat most part of known applications of spatial
  averaging technique as well as to get new ones from the unified point of view. Among the
   considered applications are classical reaction-diffusion equations,
   various types of Cahn-Hilliard (CH) equations, including the so-called
    fractional CH, 6th order CH, etc., and various modifications of Navier-Stokes equations
     including the Bardina model and Leray $\alpha$-model, hyperviscous Navier-Stokes and their
     combinations.
The paper is organized as follows.
\par
In Section \ref{s1} we  discuss the analytic properties (such as existence
and uniqueness of solutions, their regularity
 and various versions of a parabolic smoothing property) of solutions of problem
  \eqref{0.rdeg} with globally
 Lipschitz nonlinearity $F$. These properties will be used throughout of the paper.
 \par
 In Section \ref{s2}, we recall (following mainly  \cite{KZ15} and \cite{Z14}) the strong cone property
  in a differential form and general theorems about existence of an IM
   of regularity $C^{1+\eb}$ with $\eb>0$ adapted to the case of equation \ref{0.rdeg}.
   \par
 Verification of the strong cone property based on an abstract version of spatial averaging
  introduced in \cite{KZ15} is given in Section \ref{s3}. In particular, we present here the abstract
   theorems on the existence and $C^{1+\eb}$-smoothness of an IM for equation \eqref{0.rdeg}
    (also in the spirit of \cite{KZ15}).
\par
We note that usually most part of equations interesting from the applied point of view do not
 have nonlinearities which are globally Lipschitz in $H$, so, in order to get an IM for such equations,
 one usually first verify the existence of a good absorbing/attracting set in the phase space and
  then {\it truncate} the nonlinearity outside of this absorbing set to end up with globally
   Lipschitz nonlinearity. This truncation procedure is usually simple in the case when the
   spectral gap conditions are satisfied, but may be very delicate in the case of spatial averaging
   since the truncation should not affect much the spatial averaging property for the nonlinearity.
   For instance, in the original paper \cite{M-PS88} where the spatial averaging method has been suggested,
   the authors have to truncate not only the nonlinearity, but also to change in a very non-trivial way
   the leading part $Au$ of the equation. Analogously, the applications of spatial averaging
    to Navier-Stokes equations become possible due to the special truncation function $W(u)$ which
     truncates the Fourier modes of the solution $u$, suggested in \cite{K18}, see Section \ref{s4}
      for more details.
\par
In Section \ref{s4}, we suggest a unified truncation procedure (which somehow combines the approaches
 developed in \cite{M-PS88} and \cite{K18}) which allows us to deduce the spatial averaging property
  for the truncated nonlinearity directly from some natural properties
   (Assumptions I-III, see Section \ref{s4})
   of the initial non-truncated nonlinearity and the extra assumption that the initial
    non-truncated equation possesses an absorbing set in a "good" space.
\par
In Section \ref{s5} we restrict ourselves to the case where $A$ is the Laplacian in a 3D domain
 $(-\pi,\pi)^3$ with periodic boundary conditions and verify the spatial averaging property for all classes
  of nonlinearities important for our applications as well as other of Assumptions I-III. Thus,
  to get the existence
   of $C^{1+\eb}$-smooth IM, it only remains to verify the global well-posedness of the problem and the
   existence of an absorbing set in the proper "good" space.
   \par
This verification is finally done in Section \ref{s6}. Namely, the application of our
method to the classical scalar reaction-diffusion equation:
\begin{equation}\label{0.exrde}
\Dt u=\Dx u-u+f(u)+g,\ \ u\big|_{t=0}=u_0
\end{equation}
in a 3D domain endowed with periodic boundary conditions is given in subsection \ref{ss7.1}.
 We assume that $g\in H=L^2(\Omega)$ and $f$
satisfies the assumptions
 \begin{equation}\label{0.f}
1.\ f\in C^4(\R,\R),\ \
2.\ f(u)u\ge -C,\ \
3.\ f'(u)\ge-K,\ \ u\in\R.
 \end{equation}
This equation formally fits to equation \eqref{0.rdeg} with $\gamma=0$ and the main result is that
 under assumptions \eqref{0.f} this equation possesses a $C^{1+\eb}$-smooth IM, for instance,
  in the phase space $H$. In this case our approach gives nothing new in comparison with
   the standard results (and it is even a bit weaker since more accurate analysis shows
    that $f$ may be taken to be $C^2$-smooth only), but it is nevertheless presented here in order to
     demonstrate that this classical result is covered by our unified scheme.
\par
Subsection \ref{ss7.2} is devoted to the generalizations of the Cahn-Hilliard equations, namely,
\begin{equation}\label{0.exCH}
\Dt u+(-\Dx)^{\gamma}(-\Dx u+f(u)+g)=0,\ \ u\big|_{t=0}=u_0,\ \ \gamma>0
\end{equation}
in a 3D domain $(-\pi,\pi)^3$ with periodic boundary conditions. Due to the presence of the mass
 conservation law it is natural to consider this equation in the spaces of functions with zero mean
 $$
 \<u\>:=\frac1{(2\pi)^3}\int_{(-\pi,\pi)^3}u(x)\,dx,
 $$
 for instance $H=\{u\in L^2((-\pi,\pi)^3),\ \<u\>=0\}$. Then the Laplacian is positive definite and this
  equation indeed has the form of \eqref{0.rdeg}. The choice $\gamma=1$ corresponds to the
  classical Cahn-Hilliard equation considered in \cite{KZ15}. The choice $\gamma\in(0,1)$ gives the
   so-called
  fractional Cahn-Hilliard equation (see \cite{ASS16}) and $\gamma=2$ gives us the so-called 6th order
   Cahn-Hilliard equation, see \cite{Mir14} and references therein. To the best of our knowledge the
    questions related with IMs for the last two equations have been not considered in the literature.
    \par
  As an application of our abstract scheme, we get the existence of $C^{1+\eb}$-smooth IM for
  equation \eqref{0.exCH}
  for all $\gamma>0$, $g\in H$ and $f$ satisfying \eqref{0.f}. We also note that the natural phase
   space for problem \eqref{0.exCH} as well as for our abstract model \eqref{0.rdeg}
    is $H^{-\gamma}$. However, due to the smoothing property for differences of solutions
    verified in Section \ref{s1}, the statements about the existence of IM are {\it equivalent}
    in all spaces between which this smoothing property holds. By many reasons,
    it is more convenient to verify the existence of an IM in the space $H^{-\gamma}$ and
    then to extend it to
    all phase spaces $H^s$, $-\gamma<s<2$ using the above mentioned smoothing property.
\par
Finally, the case of Navier-Stokes type nonlinearities is considered in subsection \ref{ss7.3}.
Note that the classical 3D Navier-Stokes is out of reach of the modern theory even from the point
 of view of global well-posedness of solutions, so using some modified models looks unavoidable
  at this stage.
 In addition, existence of an IM even for the 2D case
  (where the global well-posedness is well-known)
is one of the key open problems in the field, so in order to get the existence of IMs we need stronger
 modifications. In this paper, we consider the following combination of hyper-viscosity with
 Leray-$\alpha$ type regularization of the velocity vector field:
 \begin{equation}\label{0.exNS}
\begin{cases}
\Dt u+(u,\Nx)\bar u+\Nx p+(-\Dx)^{1+\gamma} u=g,\ \ u\big|_{t=0}=u_0,\\
\divv u=0,\ \ \bar u:=(1-\alpha\Dx)^{-\bar\gamma}u,
\end{cases}
 \end{equation}
where  $u=(u_1,u_2,u_3)$, $\bar u=(\bar u_1,\bar u_2,\bar u_3)$ and $p(t,x)$ are unknown velocity,
"filtered" velocity and pressure respectively, $g$ is a given external forces, $\alpha>0$ is a given
 length scale parameter, $\gamma\ge0$ is a given hyper-viscosity exponent
and  a given parameter  $\bar\gamma\ge 0$ affects the
 strength of the nonlinear term.
\par
Various regularisations of the initial Navier-Stokes equations (including \eqref{0.exNS})
 have been intensively studied after the
 pioneering work of J. Leray \cite{L34} by many researches, see
 \cite{A13,BFR80,CHOT05,HLT10,LL06,ILT06,OT07} and references therein. In particular, in order to
 guarantee the global well-posedness of problem \eqref{0.exNS}, we need to require that
 \begin{equation}
2\gamma+\bar\gamma\ge\frac12,
 \end{equation}
 see also subsection \ref{ss7.3} for more details.
\par
The existence of an IM for this problem in 2D case with periodic boundary conditions
 for $\gamma=0$ and $\bar\gamma=1$ has been
 verified in \cite{HGT15} using the spectral gap conditions (which hold in 2D case but fail in 3D).
 The spatial averaging method has been applied instead of spectral gap conditions in \cite{K18} in order
  to treat 3D case with the same parameters $\gamma=0$ and $\bar\gamma=1$. The possibility to treat the
   "double-critical" case $\gamma=0$, $\bar\gamma=\frac12$ has been also outlined in \cite{K18} and then
    verified in details in \cite{LS20}. The purely hyperviscous case $\gamma=\frac12$ and
     $\bar\gamma=0$ has studied in \cite{GG18}.
\par
In the present paper, we give the existence of the $C^{1+\eb}$-smooth IMs for all intermediate cases.
Namely, as we will see below, the spatial averaging technique works if $\gamma+\bar\gamma\ge\frac12$. The
 case of strict inequality is usually simpler and can be treated using the spectral gap conditions
  (if $\gamma>0$),
 so we concentrate on the critical (from the point of view of IMs) case when
 \begin{equation}\label{0.IMc}
\gamma+\bar\gamma=\frac12,\ \ \gamma\in[0,\frac12].
\end{equation}
In this case, we define the basic space
$$
H:=\{u\in [L^2((-\pi,\pi)^3)]^3, \ \divv u=0,\ \ \<u\>=0\}
$$
and the operator $A$ as a Stokes operator (=Laplace operator restricted to the invariant
subspace of divergent free vector fields with zero mean). Then, applying the operator $A^{-\gamma}$
 to both sides of \eqref{0.exNS}, we  get the equation of the form \eqref{0.rdeg} and may apply our
 general theory to the obtained equation. This gives us the following result: for every $\gamma$ and
 $\bar\gamma$ satisfying \eqref{0.IMc} and every external forces $g$ such that $A^{-\gamma}g\in H$,
 there exist a $C^{1+\eb}$-smooth IM for the problem \eqref{0.exNS}. For the end points, this result
  covers the
  results obtained before, but it seems new for all intermediate cases. In addition, all the
   previous results for
   this equation give only Lipschitz continuous IM and $C^{1+\eb}$-smoothness is also a novelty.
\par
We finally note that, analogously to the case of reaction-diffusion equations, our result is applicable and
gives new results in the 2D case as well. Indeed, in the case of {\it square} torus $(-\pi,\pi)^2$, we have
 the spectral
 gaps of length $\ln\lambda_N$ in the spectrum of the Laplace or Stokes operator, see \cite{R82},
  so the spectral
 gap condition will
  be satisfied and no spatial averaging is required. However, this result is not known in the case of
   rectangular torus $(-\pi,\pi)\times(-\beta\pi,\beta\pi)$ if $\beta$ is irrational. In this case,
   the spatial averaging works and allows us to overcome the problem and get the desired IM.

\section{Preliminaries and an abstract model}\label{s1}
In this section we recall some basic notations, introduce an
 abstract model equation which will be of our main interest throughout  the paper and prove
 some elementary, but useful properties of its solutions. Let $A: D(A)\to H$ be a positive definite
 self-adjoint
  operator in a Hilbert space $H$ with compact inverse and let $\lambda_1\le\lambda_2\le\cdots$ be its
  eigenvalues
  enumerated in the non-decreasing order. The corresponding orthonormal base   in $H$ generated by
  its eigenvectors
   will be denoted by $\{e_n\}_{n=1}^\infty$. Then any element $u\in H$ is presented by its Fourier series:
\begin{equation}\label{01.F}
u=\sum_{n=1}^\infty u_n e_n,\ u_n:=(u,e_n),\ \ \|u\|^2_H=\sum_{n=1}^\infty u_n^2,
\end{equation}
where $(u,v)$ is the inner product in the space $H$.
\par
The fractional powers $A^s$, $s\in\R$ of operator $A$ are defined using  the standard formula
\begin{equation}
A^su:=\sum_{n=1}^\infty \lambda_n^s (u,e_n)e_n
\end{equation}
and the spaces $H^s:=D(A^{s/2})$ are defined as completions of finite linear combinations
 of $\{e_n\}_{n=1}^\infty$ with respect to the norm
\begin{equation}
\|u\|_{H^s}^2=\|A^{s/2}u\|^2_{H}=(A^su,u)=\sum_{n=1}^\infty\lambda_n^su_n^2.
\end{equation}
We consider the following abstract semi-linear parabolic problem in $H$:
\begin{equation}\label{1.FCH}
\Dt A^{-\gamma}u+Au+F(u)=g,\ \ u\big|_{t=0}=u_0,
\end{equation}
where $\gamma\ge0$ is a fixed exponent and $F:H\to H$ is a given nonlinearity which is assumed to be
globally bounded
\begin{equation}\label{1.FBOUND}
\|F(u)\|_H\le C
\end{equation}
and is globally Lipschitz continuous with global Lipschitz constant $L$:
\begin{equation}\label{1.FLIP}
\|F(u_1)-F(u_2)\|_{H}\le L\|u_1-u_2\|_{H},\ \ u_1,u_2\in H.
\end{equation}
The external force $g$ is time-independent and is taken from the space $H$ ($g\in H$).
\par
The natural phase space for problem \eqref{1.FCH} is $\Phi:=H^{-\gamma}$ ($u_0\in H^{-\gamma}$)
 although as we see from the next proposition the solution $u(t)$ becomes at least $H^2$-smooth at any
  positive time $t>0$. As usual the solutions are understood in the sense of distributions, namely,
   $u\in C(0,T;H^{-\gamma})\cap L^2(0,T;H^1)$ is a solution of \eqref{1.FCH} if for every test function
    $\varphi\in C_0^\infty(0,T;H^{2})$, the following identity holds:
    \begin{equation}
-\int_\R(u(t),A^{-\gamma}\Dt \varphi(t))\,dt+\int_\R(u(t),A\varphi(t))\,dt=\int_\R(g-F(u),\varphi(t))\,dt.
    \end{equation}

\begin{proposition}\label{Prop01.well} Let the nonlinearity $F$ satisfy \eqref{1.FBOUND} and
 \eqref{1.FLIP} and the external force $g\in H$. Then
\par
1. Equation \eqref{1.FCH} is uniquely globally solvable for all $u_0\in H^{-\gamma}$ and the corresponding
solution operators $S(t):H^{-\gamma}\to H^{-\gamma}$, $t\ge0$, generate a dissipative
 semigroup in $H^{-\gamma}$, i.e., the following estimate holds:
\begin{equation}\label{01.dis}
\|u(t)\|^2_{H^{-\gamma}}+\|u\|^2_{L^2(t,t+1;H^1)}\le
 Ce^{-\alpha t}\|u_0\|^2_{H^{-\gamma}}+C(1+\|g\|^2_{H}),
\end{equation}
where $u(t):=S(t)u_0$ and positive constants $C$ and $\alpha$ are independent of $t$ and $u_0$.
\par
2. The constructed semigroup $S(t)$ is globally Lipschitz continuous in $H^{-\gamma}$, i.e.,
for every two solutions $u_1(t)$ and $u_2(t)$ of equation \eqref{1.FCH}, we have
\begin{equation}\label{01.lip}
\|u_1(t)-u_2(t)\|^2_{H^{-\gamma}}+\|u_1-u_2\|^2_{L^2(t,t+1;H^1)}\le
C\|u_1(0)-u_2(0)\|^2_{H^{-\gamma}}e^{L_\gamma t},
\end{equation}
where the positive constants $C$ and $L_\gamma$ depend only on $L$ and $\gamma$.
\par
3. The semigroup $S(t)$ possesses an instantaneous $H^{-\gamma}$ to $H^2$ parabolic smoothing property, i.e.,
\begin{equation}\label{01.h2-sm}
\|u(t)\|_{H^2}\le C t^{-1}(\|u(0)\|_{H^{-\gamma}}+\|g\|_{H}+1),\ \ t\in(0,1]
\end{equation}
where the positive constant $C$  depends on $\gamma$, $A$ and $F$ only. In addition, if we know that
 $u(0)=u_0\in H^2$, then we have the dissipative estimate in $H^2$ as well:
 \begin{equation}\label{01.h2-dis}
\|u(t)\|^2_{H^2}\le C\|u(0)\|^2_{H^2}e^{-\alpha t}+C(1+\|g\|^2_H).
 \end{equation}
\end{proposition}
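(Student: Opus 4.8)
The plan is to prove the three parts by the Faedo--Galerkin method together with a short ladder of energy estimates (the last one carrying a time weight), deducing uniqueness in part~1 from part~2 rather than proving it separately. For part~1, the basic a~priori bound comes from pairing \eqref{1.FCH} with $u$ in $H$: since $(\Dt A^{-\gamma}u,u)=\frac12\frac{d}{dt}\|u\|_{H^{-\gamma}}^2$ and $(Au,u)=\|u\|_{H^1}^2$, while by \eqref{1.FBOUND} and $\|u\|_H^2\le\lambda_1^{-1}\|u\|_{H^1}^2$ one has $(g-F(u),u)\le\frac12\|u\|_{H^1}^2+C(1+\|g\|_H^2)$, this gives $\frac{d}{dt}\|u\|_{H^{-\gamma}}^2+\|u\|_{H^1}^2\le C(1+\|g\|_H^2)$; combined with $\|u\|_{H^1}^2\ge\lambda_1^{1+\gamma}\|u\|_{H^{-\gamma}}^2$, Gronwall's lemma yields \eqref{01.dis} and integration of the differential inequality over $(t,t+1)$ gives the $L^2(t,t+1;H^1)$ term. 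The same computation holds uniformly in $N$ for the Galerkin approximations $u^N$, so these are globally defined; reading off from the equation that $\Dt u^N$ is bounded in $L^2(H^{-1-2\gamma})$, the Aubin--Lions lemma gives precompactness of $u^N$ in $L^2_{\mathrm{loc}}(H)$, and since $F$ is bounded and continuous one may pass to the limit in $F(u^N)$ to obtain the required solution; the continuity $u\in C(H^{-\gamma})$ then follows by interpolation from $u\in L^2(H^1)$ and $\Dt u\in L^2(H^{-1-2\gamma})$.

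For part~2, the difference $w=u_1-u_2$ solves $\Dt A^{-\gamma}w+Aw+F(u_1)-F(u_2)=0$, and pairing with $w$ in $H$ and using \eqref{1.FLIP} gives $\frac12\frac{d}{dt}\|w\|_{H^{-\gamma}}^2+\|w\|_{H^1}^2\le L\|w\|_H^2$. The only point requiring care is the absorption of $L\|w\|_H^2$, for which I would split $w=P_Nw+Q_Nw$ (with $P_N$ the orthoprojector in $H$ onto the linear span of $e_1,\dots,e_N$ and $Q_N=I-P_N$) and use $\|P_Nw\|_H^2\le\lambda_N^{\gamma}\|w\|_{H^{-\gamma}}^2$ together with $\|Q_Nw\|_H^2\le\lambda_{N+1}^{-1}\|w\|_{H^1}^2$: choosing $N$ so large that $L\lambda_{N+1}^{-1}\le\frac12$ yields $\frac{d}{dt}\|w\|_{H^{-\gamma}}^2+\|w\|_{H^1}^2\le L_\gamma\|w\|_{H^{-\gamma}}^2$ with $L_\gamma$ depending only on $L$ and $\gamma$, whence \eqref{01.lip} by Gronwall (the $L^2$ term again by integration). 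In particular this gives the uniqueness needed in part~1.

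For part~3 I would bootstrap with time weights. Pairing \eqref{1.FCH} with $tAu$ and using $(\Dt A^{-\gamma}u,Au)=\frac12\frac{d}{dt}\|u\|_{H^{1-\gamma}}^2$, $(Au,Au)=\|u\|_{H^2}^2$ and $|(g-F(u),Au)|\le\frac12\|u\|_{H^2}^2+C(1+\|g\|_H^2)$ gives $\frac{d}{dt}\|u\|_{H^{1-\gamma}}^2+\|u\|_{H^2}^2\le C(1+\|g\|_H^2)$; multiplying by $t$, integrating over $(0,t)$ for $t\le1$, and invoking $\|u\|_{H^{1-\gamma}}\le C\|u\|_{H^1}$ with the $L^2(0,1;H^1)$ bound of part~1 yields $t\|u(t)\|_{H^{1-\gamma}}^2+\int_0^t s\|u(s)\|_{H^2}^2\,ds\le CR^2$, where $R:=\|u_0\|_{H^{-\gamma}}+\|g\|_H+1$. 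Next, differentiating \eqref{1.FCH} in $t$ (rigorously on the Galerkin level, using $\|\Dt F(u)\|_H\le L\|\Dt u\|_H$), the function $v:=\Dt u$ solves $\Dt A^{-\gamma}v+Av+\Dt[F(u)]=0$; pairing with $A^{-\gamma}v$ and handling $\Dt[F(u)]$ as in part~2 (for $\gamma\le1$ the dissipation $\|v\|_{H^{1-\gamma}}^2$ already controls $\|v\|_H^2$) gives $\frac{d}{dt}\|v\|_{H^{-2\gamma}}^2+\|v\|_{H^{1-\gamma}}^2\le C\|v\|_{H^{-2\gamma}}^2$, and since $\|v\|_{H^{-2\gamma}}=\|A^{-\gamma}v\|_H=\|g-Au-F(u)\|_H\le C(1+\|g\|_H)+\|u\|_{H^2}$, the previous step gives $\int_0^t s\|v(s)\|_{H^{-2\gamma}}^2\,ds\le CR^2$ and hence, multiplying the $v$-inequality by $t^2$ and integrating, $t^2\|v(t)\|_{H^{-2\gamma}}^2\le CR^2$. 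Finally the equation itself gives $\|u(t)\|_{H^2}=\|Au(t)\|_H=\|g-F(u(t))-A^{-\gamma}v(t)\|_H\le C(1+\|g\|_H)+\|v(t)\|_{H^{-2\gamma}}\le Ct^{-1}R$, which is \eqref{01.h2-sm}. For \eqref{01.h2-dis} one runs the same chain without time weights: if $u_0\in H^2$ then $v(0)=A^{\gamma}(g-Au_0-F(u_0))\in H^{-2\gamma}$ with $\|v(0)\|_{H^{-2\gamma}}\le C(1+\|g\|_H)+\|u_0\|_{H^2}$, so Gronwall on $[0,1]$ bounds $\|u(t)\|_{H^2}^2$ by $C(\|u_0\|_{H^2}^2+\|g\|_H^2+1)$ there, while for $t\ge1$ one applies \eqref{01.h2-sm} at time $t-1$ (by autonomy) together with \eqref{01.dis} to produce the exponentially decaying term.

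The routine parts are the Galerkin passage to the limit and the Young/Gronwall manipulations; the real work is in part~3. Because the factor $A^{-\gamma}$ multiplies $\Dt$, the natural energy spaces are shifted by $\gamma$, while $F$ and $g$ are only assumed to lie in $H=H^0$, so the equation cannot be brought to a convenient form (for instance by subtracting a stationary state, which would need $F(u)$ and $g$ to have extra smoothness), and a pointwise $H^2$ bound genuinely requires the time-differentiated equation rather than pure energy estimates. The delicate point is controlling $\Dt[F(u)]$, which only obeys $\|\Dt[F(u)]\|_H\le L\|\Dt u\|_H$: for $\gamma\le1$ the dissipation obtained by pairing the $v$-equation with $A^{-\gamma}v$ dominates $\|v\|_H$ and the chain closes in one step, but for $\gamma>1$ the space $H^{1-\gamma}$ has negative order, so one must instead pair with $v$ and recover the intermediate norms by interpolation between $H^{-2\gamma}$ and $H^1$, iterating through finitely many powers of the time weight; keeping every weighted integral convergent near $t=0$ in this iteration is the heaviest part of the bookkeeping. (An alternative for this last step is to first deduce $u\in C^{\alpha}_{\mathrm{loc}}((0,\infty);H)$ from the weaker smoothing and the equation, and then invoke standard parabolic regularity for Hölder-in-time forcing.) Finally, since $F$ is merely Lipschitz, the time-differentiation must be justified through the approximation scheme rather than formally.
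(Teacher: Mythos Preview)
Your parts 1 and 2 are essentially the paper's argument (your spectral splitting $w=P_Nw+Q_Nw$ is just a concrete realisation of the interpolation $\|w\|_H^2\le\eb\|w\|_{H^1}^2+C_\eb\|w\|_{H^{-\gamma}}^2$ the paper uses). For the smoothing \eqref{01.h2-sm} you take a somewhat different route: you insert an intermediate $H^{1-\gamma}$ step (pairing with $tAu$) and then pair the time-differentiated equation with $A^{-\gamma}v$, whereas the paper reads off $\Dt u\in L^2(0,1;H^{-2\gamma-1})$ directly from the equation and pairs the differentiated equation with $t^2v$, interpolating both bad terms against $\|v\|_{H^{-2\gamma-1}}^2$. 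The paper's choice closes in one pass for every $\gamma\ge0$; yours is clean for $\gamma\le1$ but, as you note, needs an iteration for $\gamma>1$.

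The genuine gap is in your treatment of \eqref{01.h2-dis} when $\gamma>1$. Running your $v=\Dt u$ estimate without time weights means absorbing $L\|v\|_H\|v\|_{H^{-2\gamma}}$ using only the dissipation $\|v\|_{H^{1-\gamma}}^2$ and the energy $\|v\|_{H^{-2\gamma}}^2$; for $\gamma>1$ the exponent $1-\gamma$ is negative, $0\notin[-2\gamma,1-\gamma]$, and there is no interpolation that controls $\|v\|_H$, so the differential inequality does not close. Switching to the pairing with $v$ (your declared fix for $\gamma>1$) does not help either: that yields an $H^{-\gamma}$ energy, while $u_0\in H^2$ only gives $v(0)=A^\gamma(g-Au_0-F(u_0))\in H^{-2\gamma}$, not $H^{-\gamma}$, so you cannot start Gronwall at $t=0$. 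The paper flags exactly this obstruction (``$u_0\in H^2$ is not enough to get $\Dt u(0)\in H^{-\gamma}$'') and abandons the $\Dt u$ route for \eqref{01.h2-dis} altogether: it rewrites \eqref{1.FCH} as the linear problem $\Dt u+A^{1+\gamma}u=A^\gamma(g-F(u))$, obtains $F(u)\in C^\kappa([0,1];H)$ by interpolating between $u\in L^\infty(H^{-\gamma})$ and $\Dt u\in L^\infty(H^{-3\gamma-2})$, and then invokes maximal H\"older parabolic regularity to land in $C([0,1];H^2)$. Your parenthetical about H\"older-in-time forcing points in the right direction, but it must be carried out on the closed interval $[0,1]$, for the $u$-equation itself, and with the a~priori bound on $\|F(u)\|_{C^\kappa}$ closed by an $\eb$-absorption against $\|u\|_{C^\kappa(H^1)}$ as the paper does.
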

\begin{proof} Since all statements of this proposition are more or less standard and can be checked as
 in the linear case $F=0$, we give here only the sketch of the proof and leave the details for the reader.
 \par
 {\it Step 1. A priori estimate in $H^{-\gamma}$.} To this end, we multiply (take an inner product)
 of equation \eqref{1.FCH} with $u$ (it is easy to see that all obtained terms make sense, so this multiplication
  is justified). This gives
  $$
  \frac12\frac d{dt}\|u(t)\|^2_{H^{-\gamma}}+\|u(t)\|^2_{H^1}+(F(u),u)=(g,u).
  $$
  Using the  inequality $\|u\|^2_{H^{-\gamma}}\le\lambda_1^{-\gamma-1}\|u\|_{H^1}^2$, the boundedness of $F$ and
   the Gronwall lemma, we get the desired dissipative estimate \eqref{01.dis}
\par
{\it Step 2. Existence and uniqueness.} Let $u_1$ and $u_2$ be two solutions and $v(t)=u_1(t)-u_2(t)$. Then this
 function solves
\begin{equation}
\Dt A^{-\gamma}v(t)+Av(t)+[F(u_1(t))-F(u_2(t))]=0.
\end{equation}
Multiplying this equation by $v$ and using the Lipschitz continuity of $F$, we get
$$
\frac12\frac d{dt}\|v(t)\|^2_{H^{-\gamma}}+\|v(t)\|^2_{H^1}\le L\|v(t)\|^2_{H}.
$$
Using the obvious interpolation inequality
$$
\|v\|^2_H\le \eb\|v\|^2_{H^{1}}+C_\eb\|v\|^2_{H^{-\gamma}}
$$
and the Gronwall lemma, we get the desired uniqueness and estimate \eqref{01.lip}.
\par
The existence of a solution can be obtained by the standard Galerkin approximations using, e.g., the
spectral base $\{e_n\}_{n=1}^\infty$, see e.g. \cite{BV92,T97} for the details.
\par
{\it Step 3. Estimates for $\Dt u$.} We first note that expressing $\Dt u$ from equation \eqref{1.FCH} and
 using estimates \eqref{01.dis}, we conclude that
 \begin{equation}\label{1.Dtu-new}
\|\Dt u\|_{L^2(0,1;H^{-2\gamma-1})}\le C(1+\|g\|_{H}+\|u_0\|_{H^{-\gamma}}).
 \end{equation}
After that, formally differentiating equation \eqref{1.FCH} in time and denoting $v(t):=\Dt u(t)$,
we get the equation
\begin{equation}
\Dt A^{-\gamma}v+Av+F'(u(t))v=0.
\end{equation}
Multiplying this equation by $t^2v(t)$, we arrive at
\begin{equation}\label{01.in-sm}
\frac12\frac d{dt}(t^2\|v(t)\|^2_{H^{-\gamma}})+t^2\|v(t)\|^2_{H^1}\le
 Lt^2\|v(t)\|^2_H+t\|v(t)\|^2_{H^{-\gamma}}.
\end{equation}
We estimate the last term in the right-hand side using the interpolation inequality:
$$
t\|v(t)\|^2_{H^{-\gamma}}\le Ct\|v(t)\|_{H^{-2\gamma-1}}\|v(t)\|_{H^1}\le
 \frac14t^2\|v(t)\|^2_{H^1}+C\|v(t)\|^2_{H^{-2\gamma-1}}.
$$
Integrating this inequality over $t$ and using the obvious inequality
$$
Lt^2\|v(t)\|^2_H\le \frac14t^2\|v(t)\|^2_{H^1}+C\|v(t)\|_{H^{-2\gamma-1}}^2,
$$
we end up (using also \eqref{1.Dtu-new}) with the desired inequality
\begin{equation}\label{01.sm-dt}
t^2\|\Dt u(t)\|^2_{H^{-\gamma}}\le C(1+\|u_0\|^2_{H^{-\gamma}}+\|g\|^2_H),\ \ t\in[0,1].
\end{equation}
Being pedantic, estimate \eqref{01.sm-dt} requires justification. This justification can be
done by approximating the solution $u$ by spectral Galerkin solutions $u_N(t)$ and on the
 finite-dimensional level the corresponding nonlinearity which is a priori Lipschitz can be easily
  approximated by smooth ones without increasing the Lipschitz constant. Since all these arguments are standard,
  we left the details to the reader.
\par
{\it Step 4. Smoothing property for $u(t)$.} We rewrite equation \eqref{1.FCH} as a point-wise in $t$ elliptic problem:
\begin{equation}\label{01.el-l}
Au(t)=\tilde g(t):=g-A^{-\gamma}\Dt u(t)-F(u(t))
\end{equation}
which together with the already obtained estimate for $\Dt u(t)$ gives the desired estimate
 \eqref{01.h2-sm} for $u(t)$. As an immediate corollary of \eqref{01.h2-sm} and \eqref{01.dis} we get
  the desired dissipative estimate \eqref{01.h2-dis} for large enough $t$ (say, $t\ge1$).
\par
{\it Step 5. $H^2$-estimates for small time.} As usual for Cahn-Hilliard type equations, there is an
extra small problem to get estimates of $\|u(t)\|_{H^2}$ for finite (small) time $t>0$. The above
 technique based on estimating $\Dt u(t)$ does not work well here since $u_0\in H^2$ is not enough to get $\Dt u(0)\in H^{-\gamma}$, so we need
  to argue in a bit more delicate way. Namely, we will use the classical parabolic regularity
  stated in the following lemma.
  \begin{lemma} Let $u(t)$ solve the linear problem:
  \begin{equation}\label{01.lin}
\Dt u+A^{1+\gamma}u=h(t),\ u\big|_{t=0}=u_0\in H^2,\ \ h\in C^\kappa(0,1;H^{-2\gamma})
  \end{equation}
  for some $0<\kappa\le\frac1{2(\gamma+1)}$.
  Then the following  estimate holds:
\begin{equation}\label{01.mreg-g}
\|u\|_{C^1(0,1;H^{-2\gamma})\cap C(0,1;H^2)}+\| u\|_{C^\kappa(0,1; H^1)}\le
 C(\|u_0\|_{H^2}+\|h\|_{C^\kappa(0,1;H^{-2\gamma})}).
\end{equation}
\end{lemma}
\begin{proof} We split $u(t)=u_1(t)+u_2(t)$, where
$$
\Dt u_1+A^{1+\gamma}u_1=h,\ u_1\big|_{t=0}=0,\ \ \Dt u_2+A^{1+\gamma}u_2=0,\ \ u_2\big|_{t=0}=u_0.
$$
Then, for the first equation, using the fact that $A^{1+\gamma}$ generates an analytic
 semigroup in $H$, we have the following maximal regularity result:
 $$
\| u_1\|_{C^{1+\kappa}(0,1; H^{-2\gamma})\cap C^\kappa(0,1; H^2)}\le
 C\|h\|_{C^\kappa(0,1;H^{-2\gamma})}.
$$
for all $0<\kappa<1$, see e.g., \cite{Cle87}. For the second component $u_2$, we
have a bit weaker estimate
$$
\| u_2\|_{C^1(0,1; H^{-2\gamma})\cap C(0,1; H^2)}\le
 C\|u_0\|_{H^2},
$$
see again \cite{Cle87}. Using now the interpolation
$$
\|u_2\|_{C^\kappa(0,1;H^1)}\le \|u_2\|_{C^{1}(0,1; H^{-2\gamma})\cap C(0,1; H^2)}
$$
for $0<\kappa\le\frac1{2(\gamma+1)}$, we get the desired result and finish the proof of the lemma.
 \end{proof}
 To apply this result to our case, we estimate the nonlinearity using the global
 Lipschitz continuity assumption:
 \begin{multline}
\|F(u)\|_{C^\kappa(0,1;H)}\le
C(1+\|u\|_{C^\kappa(0,1;H)})\le \eb\|u\|_{C^\kappa(0,1;H^1)}+\\+
C_\eb(1+\|u\|_{Lip(0,1;H^{-3\gamma-2})})\le
\eb\|u\|_{C^\kappa(0,1;H^1)}+\\+C_\eb(1+\|\Dt u\|_{L^\infty(0,1;H^{-3\gamma-2})})\le
\eb\|u\|_{C^\kappa(0,1;H^1)}+
C_\eb(1+\|u_0\|_{H^2}+\|g\|_H),
 \end{multline}
 where $\eb>0$ is arbitrary and we have used
 inequality \eqref{01.dis} in order to estimate the $H^{-3\gamma-2}$-norm of $\Dt u$.
\par
Applying estimate \eqref{01.mreg-g} to equation \eqref{01.lin} with $h(t)=A^{\gamma}(g-F(u(t)))$ and
 fixing $\eb>0$ small enough, we finally
 arrive at
 $$
\|u\|_{C(0,1;H^2)}\le C(1+\|u_0\|_{H^2})
$$
which gives us the desired estimate \eqref{01.h2-dis} and finishes the proof of the proposition.
\end{proof}
In what follows we will also need smoothing estimates for differences of solutions which,
in particular, will
 allow us to show that the IMs constructed in the phase space $H^{-\gamma}$ will be simultaneously IMs in more regular spaces $H^s$.

\begin{proposition}\label{Prop01.d-sm} Let the assumptions of Proposition \ref{Prop01.well}
hold and let $u_1(t)$
and $u_2(t)$ be two solutions
 of problem \eqref{1.FCH}. Then, for every $\beta>0$, the following estimate holds:
\begin{equation}\label{01.dsm}
\|u_1(t)-u_2(t)\|_{H^{2-\beta}}\le C_\beta t^{-1}\|u_1(0)-u_2(0)\|_{H^{-\gamma}},\ \ t\in(0,1],
\end{equation}
where the constant $C_\beta$ is independent of the choice of $u_1$ and $u_2$.
\end{proposition}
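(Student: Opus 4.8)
The plan is to exploit that the difference $v(t):=u_1(t)-u_2(t)$ solves a \emph{linear} equation and to run a bootstrap on a singularly weighted norm. Subtracting two copies of \eqref{1.FCH} gives
\[
\Dt A^{-\gamma}v+Av=h(t),\qquad h(t):=-\big(F(u_1(t))-F(u_2(t))\big),\qquad v\big|_{t=0}=v_0:=u_1(0)-u_2(0),
\]
where, by \eqref{1.FLIP}, $\|h(t)\|_H\le L\|v(t)\|_H$. Applying $A^{\gamma}$ turns this into $\Dt v+A^{1+\gamma}v=A^{\gamma}h(t)$, and since $A^{1+\gamma}$ generates an analytic semigroup, $v$ is given by Duhamel's formula $v(t)=e^{-A^{1+\gamma}t}v_0+\int_0^t e^{-A^{1+\gamma}(t-s)}A^{\gamma}h(s)\,ds$. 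One may assume $2-\beta>-\gamma$, since otherwise $\|v(t)\|_{H^{2-\beta}}\le C\|v(t)\|_{H^{-\gamma}}$ and the claim follows at once from \eqref{01.lip}; set $\theta:=\gamma/(2-\beta+\gamma)\in[0,1)$.

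Next I would introduce $\phi(t):=\sup_{0<s\le t}s\,\|v(s)\|_{H^{2-\beta}}$ and aim at $\phi(t)\le C_\beta\|v_0\|_{H^{-\gamma}}$ for $t\in(0,1]$. A priori $\phi(t)<\infty$: applying the smoothing estimate \eqref{01.h2-sm} of Proposition~\ref{Prop01.well} to $u_1$ and $u_2$ separately gives $\|v(s)\|_{H^{2-\beta}}\le\|v(s)\|_{H^2}\le C s^{-1}(\|u_1(0)\|_{H^{-\gamma}}+\|u_2(0)\|_{H^{-\gamma}}+\|g\|_H+1)$, which is finite but has the "wrong" right-hand side — the bootstrap will repair it. Using the elementary bound $\|A^{\sigma}e^{-A^{1+\gamma}\tau}\|_{H\to H}\le C_{\sigma}\tau^{-\sigma/(1+\gamma)}$, the two Duhamel terms are estimated in $H^{2-\beta}$ by $C t^{-a}\|v_0\|_{H^{-\gamma}}$ with $a=(1-\tfrac\beta2+\tfrac\gamma2)/(1+\gamma)\in(0,1)$, and by $C\int_0^t(t-s)^{-\mu}\|h(s)\|_H\,ds$ with $\mu=(1-\tfrac\beta2+\gamma)/(1+\gamma)$; the decisive arithmetic point, and in fact the only place where $\beta>0$ is used, is that $\mu<1$, so the time integral converges (the endpoint $\beta=0$, i.e. $H^2$, is genuinely excluded, consistently with \eqref{01.h2-sm}). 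Finally $\|h(s)\|_H\le L\|v(s)\|_H\le L\|v(s)\|_{H^{-\gamma}}^{1-\theta}\|v(s)\|_{H^{2-\beta}}^{\theta}$ by interpolation; bounding the first factor by \eqref{01.lip} and the second by $s^{-1}\phi(t)$ yields $\int_0^t(t-s)^{-\mu}\|h(s)\|_H\,ds\le C\|v_0\|_{H^{-\gamma}}^{1-\theta}\phi(t)^{\theta}\int_0^t(t-s)^{-\mu}s^{-\theta}\,ds$, and the last integral equals $B(1-\mu,1-\theta)\,t^{1-\mu-\theta}$ with $1-\mu>0$ and $1-\theta>0$.

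Collecting these bounds at a time $s\le t\le1$, multiplying by $s$ and using $1-a>0$ and $2-\mu-\theta>0$, I obtain $s\,\|v(s)\|_{H^{2-\beta}}\le C\|v_0\|_{H^{-\gamma}}+C\|v_0\|_{H^{-\gamma}}^{1-\theta}\phi(t)^{\theta}$; taking the supremum over $s\in(0,t]$ gives $\phi(t)\le C\|v_0\|_{H^{-\gamma}}+C\|v_0\|_{H^{-\gamma}}^{1-\theta}\phi(t)^{\theta}$. Since $\theta<1$ and $\phi(t)<\infty$, Young's inequality absorbs the last term into the left-hand side and produces $\phi(t)\le C_\beta\|v_0\|_{H^{-\gamma}}$, which is exactly \eqref{01.dsm}. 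I expect the main obstacle to be purely bookkeeping — tracking the fractional exponents so that $a<1$, $\mu<1$ and $\mu+\theta<2$ hold simultaneously (this is where $\beta>0$ enters), together with securing the a priori finiteness of $\phi$ before the absorption step; note that when $\gamma=0$ one has $\theta=0$ and the argument collapses to a one-line estimate, since then $\|h(s)\|_H\le L\|v(s)\|_{H^{-\gamma}}\le C\|v_0\|_{H^{-\gamma}}$ directly.
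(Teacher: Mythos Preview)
Your argument is correct and complete in all essentials, but it takes a genuinely different route from the paper's.

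\textbf{What the paper does.} The paper introduces the auxiliary function $w(t):=t\,v(t)$, which solves
\[
\Dt w+A^{1+\gamma}w=\tilde h(t):=-tA^\gamma\big(F(u_1(t))-F(u_2(t))\big)+v(t),\qquad w\big|_{t=0}=0,
\]
and then applies a (non-endpoint) parabolic maximal regularity estimate in $C(0,1;H^{2-\beta})$; the loss $\beta>0$ is explained there by the well-known failure of maximal regularity in $C$-type spaces. The first part of $\tilde h$ is bounded by $L\|w\|_{C(0,1;H)}$ and absorbed by spatial interpolation $\|w\|_H\le\eb\|w\|_{H^{2-\beta}}+C_\eb\|w\|_{H^{-\gamma}}$, while the stray $v$-term is controlled directly by \eqref{01.lip}. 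The whole proof is four lines once the black-box regularity result is invoked.

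\textbf{What you do differently.} You work directly with the Duhamel representation and the elementary analytic semigroup decay $\|A^\sigma e^{-A^{1+\gamma}\tau}\|_{\Cal L(H,H)}\le C_\sigma\tau^{-\sigma/(1+\gamma)}$, never appealing to maximal regularity. The time weight is encoded in the bootstrap quantity $\phi(t)=\sup_{0<s\le t}s\|v(s)\|_{H^{2-\beta}}$, and closure is via Young's inequality in the exponent $\theta<1$ rather than the paper's $\eb$-absorption. This is more elementary and makes the role of $\beta>0$ completely explicit (it is precisely $\mu<1$). The price is more exponent bookkeeping.

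\textbf{One minor bookkeeping point.} Your reduction ``one may assume $2-\beta>-\gamma$'' is not quite enough to guarantee $\theta=\gamma/(2-\beta+\gamma)\in[0,1)$: for that you also need $2-\beta>0$, i.e.\ $\beta<2$. This is harmless, since for $\beta\ge2$ the estimate follows from the already-proved case $\beta'\in(0,2)$ via $\|v(t)\|_{H^{2-\beta}}\le C\|v(t)\|_{H^{2-\beta'}}$; but the sentence should be adjusted accordingly.
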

\begin{proof} Let $v(t):=u_1(t)-u_2(t)$ and let $w(t)=tv(t)$. Then, the last function solves
\begin{equation}\label{01.ddif}
\Dt w+A^{1+\gamma}w=\tilde h(t):=-tA^\gamma(F(u_1(t))-F(u_2(t)))+ v(t),\ \ w\big|_{t=0}=0.
\end{equation}
We want to apply the analogue of estimate \eqref{01.mreg-g} with $\kappa=0$ to this equation. However,
as well-known, the maximal regularity estimate works perfectly in H\"older spaces, but fails in $C$, so we need
 to decrease the regularity exponent (from $2$ till $2-\beta$) in order to restore the validity,
  see, say, \cite{Cle87,Tri78} for more details. This gives us the following estimate
  \begin{multline}
\|w\|_{C(0,1;H^{2-\beta})}\le C_\beta\|\tilde h\|_{C(0,1;H^{-2\gamma})}\le
C_\beta L\|w\|_{C(0,1;H)}+\|v\|_{C(0,1;H^{-2\gamma})}\le\\\le
\eb\|w\|_{C(0,1;H^{2-\beta})}+C_\eb\|v\|_{C(0,1;H^{-\gamma})}.
  \end{multline}
Fixing $\eb=\frac12$ in this estimate and using \eqref{01.lip}, we get the desired estimate \eqref{01.dsm}
 and finish the proof of the proposition.
\end{proof}
\begin{remark}\label{Rem01.e-sm} The restriction that the smoothing exponent in \eqref{01.dsm}
 is restricted by $2-\beta<2$ is related with the fact that $F'(u)$ is a bounded
  operator from $H$ to $H$ only. If we know, in addition, that
  \begin{equation}\label{01.esm}
\|F'(u)\|_{\Cal L(H^{s_0},H^{s_0})}\le C,
  \end{equation}
  for some $0<s_0<2$, we may get the analogue of the smoothing property \eqref{01.dsm}, where
  $2-\beta$ is replaced by $2+s_0-\beta$ (with $t^{-1}$  replaced by $t^{-2}$). Indeed,
   to get this estimate we just  need to make one more step. Namely, when \eqref{01.dsm} is already
    obtained, we need to return to equation \eqref{01.ddif},  apply the  parabolic
    regularity theorem to it in the space $H^{s_0-2\gamma}$ and use \eqref{01.esm} to estimate the terms
     related with the nonlinearity.
\end{remark}
\section{Inertial Manifolds and cone property}\label{s2}

The aim of this section is to recall the basic facts about the Inertial Manifolds (IMs) adapted to
 our model equation \eqref{1.FCH}. We will consider here only the case where the IM is constructed over
 the spectral subspace $H_N=\operatorname{span}\{e_1,\cdots,e_N\}$ generated by first $N$ eigenvectors
  of the operator $A$. Here and below, we denote by $P_N: H\to H_{N,+}$ the orthoprojector defined by
  $$
  P_N u:=\sum_{n=1}^N(u,e_n)e_n
  $$
and $Q_N:=1-P_N$. It is not difficult to see that $P_N$ and $Q_N$ are orthoprojectors in $H^s$, $s\in\R$
 and generate a splitting
 $$
 H^s=H_{N,+}^s\oplus H_{N,-}^s,\ \ H_{N,+}^s=H_{N,+},\ \ H_{N,-}^s=Q_NH^s
 $$
 of the space $H^s$ into the orthogonal sum of two spectral subspaces. Of course,
  the dimension of $H_N$ is $N$.

\begin{definition} \label{Def02.IM} A sub-manifold $\Cal M\subset H^{-\gamma}$ of dimension $N$
 is called an Inertial Manifold for equation \eqref{1.FCH} if the following conditions are satisfied:
 \par
 1. $\Cal M$ is invariant with respect to the solution semigroup $S(t)$ generated by
 \eqref{1.FCH}: $S(t)\Cal M=\Cal M$;
 \par
 2. $\Cal M$ is a graph of a globally Lipschitz continuous
 function $\Bbb M: H_{N,+}^{-\gamma}\to H_{N,-}^{-\gamma}$:
 \begin{equation}\label{02.graph}
\Cal M=\{u_++u_-,\ \ u_-=\Bbb M(u_+),\ \ u_+\in H_{N,+}^{-\gamma}\}.
 \end{equation}
 We will say that the IM $\Cal M$ is $C^{1+\alpha}$-smooth if $\Bbb M$ is $C^{1+\alpha}$-smooth.
 \par
 3. The manifold $\Cal M$ possesses the exponential tracking (=asymptotic phase) property, namely,
 there exists a positive constant $\theta$ such that for any $u_0\in H^{-\gamma}$ there exists
  a "trace" $\bar u_0\in\Cal M$ such that
  \begin{equation}
  \|S(t)u_0-S(t)\bar u_0\|_{H^{-\gamma}}\le Ce^{-\theta t}\|u_0-\bar u_0\|_{H^{-\gamma}}
  \end{equation}
  for some positive $C$.
\end{definition}
\begin{remark} As known, the above stated properties of IMs are closely related with
 {\it normal-hyperbolicity}. Indeed, usually the manifold $\Cal M$ is not only Lipschitz continuous,
 but also is $C^{1+\alpha}$-smooth for some small positive $\alpha$, so we may speak about tangential
  and transversal directions.
\par
   Then, as a rule the exponent of attraction   in directions
  transversal to the manifold  ($\theta$) is not only positive,
   but also {\it larger} than the Lyapunov exponents in the tangential directions. This, in particular, gives
    us the robustness of the IM with respect to perturbations, see \cite{Fen72,M-PS88,RT96,KZ15,Z14} for more details.
\end{remark}
The existence of an IM is usually verified by checking the so-called invariant {\it cone} property. To state it
 in our situation we introduce the following quadratic form:
 \begin{equation}\label{02.defV}
 V(\xi)=V_N(\xi):=\|Q_N\xi\|^2_{H^{-\gamma}}-\|P_N\xi\|^2_{H^{-\gamma}},\ \ \xi\in H^{-\gamma}
 \end{equation}
 and define the associated cone in the phase space $H^{-\gamma}$:
 \begin{equation}\label{02.defc}
K^+:=\bigg\{\xi\in H^{-\gamma}, V(\xi)\le0\bigg\}.
 \end{equation}

\begin{definition}\label{Def02.cs} Let the above assumptions hold.
We say that the solution semigroup $S(t)$ generated by equation \eqref{1.FCH} possesses the cone
 property (invariance of the cone $K^+$) if
\begin{equation}\label{02.cp}
\xi_1 - \xi_2 \in K^+ \Rightarrow S(t)\xi_1 - S(t)\xi_2 \in K^+, \text{ for all } t\ge 0,
\end{equation}
where $\xi_1, \xi_2 \in H^{-\gamma}$.
\par
Analogously, we say that $S(t)$ possesses the squeezing property if
 there exist positive $\theta$ and $C$ such that
\begin{equation}\label{02.sqz}
S(T)\xi_1- S(T)\xi_2 \not \in K^+ \Rightarrow \|S(t)\xi_1 - S(t)\xi_2\|_{H^{-\gamma}}
 \le C e^{-\theta t} \|\xi_1 - \xi_2\|_{H^{-\gamma}},\  t\in [0, T].
\end{equation}
\end{definition}
The key result of the theory of invariant manifolds is that (at least on the level of
 abstract semi-linear parabolic equations) the cone and squeezing properties imply the existence of an IM.

 \begin{theorem}\label{Th02.IM} Let the solution semigroup $S(t)$ of problem \eqref{1.FCH} possess
 the cone and
 squeezing properties. Then there exists a Lipschitz IM for this problem in the phase space $H^{-\gamma}$.
 \end{theorem}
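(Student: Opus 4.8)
The plan is to construct the IM as a graph over the finite-dimensional space $H_{N,+}^{-\gamma}$ by the classical Lyapunov--Perron method, using the cone and squeezing properties as the two structural ingredients. First I would set up the space of admissible graphs: consider the collection $\Lambda$ of Lipschitz functions $\Bbb M: H_{N,+}^{-\gamma}\to H_{N,-}^{-\gamma}$ whose Lipschitz constant is bounded by $1$ (this bound being exactly what the cone $K^+$ encodes, since $\xi_1-\xi_2\in K^+$ means $\|P_N(\xi_1-\xi_2)\|_{H^{-\gamma}}\ge\|Q_N(\xi_1-\xi_2)\|_{H^{-\gamma}}$). Because of the global Lipschitz continuity and global boundedness of $F$ in \eqref{1.FBOUND}--\eqref{1.FLIP}, all trajectories are bounded backward in time on the would-be manifold, so I would work with the space $\Cal X_\theta$ of complete bounded trajectories $u(\cdot):(-\infty,0]\to H^{-\gamma}$ equipped with the weighted norm $\sup_{t\le 0}e^{\theta t}\|u(t)\|_{H^{-\gamma}}$ for the rate $\theta$ coming from the squeezing property \eqref{02.sqz}. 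The manifold is then defined as the set of values at $t=0$ of such trajectories.

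Next I would prove that for each $u_+^0\in H_{N,+}^{-\gamma}$ there exists exactly one complete trajectory $u\in\Cal X_\theta$ with $P_N u(0)=u_+^0$. Existence of \emph{some} complete bounded trajectory through a prescribed value at $t=0$ is obtained as a limit of solutions on $[-T,0]$ as $T\to\infty$, using the dissipativity estimate \eqref{01.dis} for compactness and the smoothing estimate \eqref{01.h2-sm} to pass to the limit; uniqueness of the trajectory with given $P_N$-component at $t=0$ is precisely where the squeezing property enters --- if two such trajectories existed, their difference would, by the cone dichotomy, have to either stay in $K^+$ forever (forcing it to be identically zero at $t=0$ on the $P_N$-part, contradiction, since they agree there) or leave $K^+$ and then decay by \eqref{02.sqz}, but running this backward in time forces exponential growth incompatible with membership in $\Cal X_\theta$. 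Defining $\Bbb M(u_+^0):=Q_N u(0)$ then gives a well-defined map, and invariance $S(t)\Cal M=\Cal M$ is immediate by time-translation of complete trajectories.

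The Lipschitz bound on $\Bbb M$ is the heart of the argument and follows directly from the cone invariance \eqref{02.cp}: take $u_+^1,u_+^2$, the corresponding complete trajectories $u^1,u^2$, and suppose $\|Q_N(u^1(0)-u^2(0))\|_{H^{-\gamma}}>\|P_N(u^1(0)-u^2(0))\|_{H^{-\gamma}}$, i.e. $u^1(0)-u^2(0)\notin K^+$; by \eqref{02.sqz} applied on $[-T,0]$ the difference would then satisfy $\|u^1(-T)-u^2(-T)\|_{H^{-\gamma}}\ge c\,e^{\theta T}\|u^1(0)-u^2(0)\|_{H^{-\gamma}}$, contradicting boundedness of both trajectories as $T\to\infty$; hence $u^1(0)-u^2(0)\in K^+$, which is exactly $\|\Bbb M(u_+^1)-\Bbb M(u_+^2)\|_{H^{-\gamma}}\le\|u_+^1-u_+^2\|_{H^{-\gamma}}$. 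Finally, for the exponential tracking property \eqref{02.sqz} of Definition \ref{Def02.IM}(3): given arbitrary $u_0\in H^{-\gamma}$, I would produce its trace $\bar u_0\in\Cal M$ by the standard Lyapunov--Perron fixed-point scheme, solving for a trajectory that has the prescribed $Q_N$-behaviour relative to $S(t)u_0$ while lying on $\Cal M$; the contraction constant of this scheme and the decay rate are again controlled by $\theta$ from the squeezing property, and the cone property guarantees that the difference $S(t)u_0-S(t)\bar u_0$ stays outside $K^+$ so that \eqref{02.sqz} applies on every $[0,T]$ and yields the exponential estimate in the limit $T\to\infty$.

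The main obstacle I anticipate is the uniqueness-of-backward-trajectory step and the closely related verification that the Lyapunov--Perron fixed point actually lands in $\Cal X_\theta$ with the right rate: one must choose $\theta$ (and implicitly $N$, through the spectral data hidden in the cone/squeezing constants) so that the exponential weight simultaneously defeats the backward growth of trajectories off the manifold and is compatible with the a priori bounds from \eqref{01.dis}; making these inequalities consistent is routine once the cone and squeezing constants are in hand but is the place where the argument could break if the hypotheses were weaker. Everything else --- compactness for existence, graph structure, invariance --- is soft and follows the template in \cite{FST88,KZ15,Z14}.
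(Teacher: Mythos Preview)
Your overall architecture---constructing the manifold as the set of time-zero values of backward-bounded trajectories, then reading off the graph property, Lipschitz bound, and tracking from cone/squeezing---is the right one, and is close in spirit to what the paper does. But there is a genuine gap in your Lipschitz argument, and it is worth comparing to the paper's sketch to see why.

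The paper (which defers the full proof to \cite{M-PS88,Z14} and only sketches the construction) does \emph{not} argue Lipschitz continuity by contradiction via the squeezing property. Instead it solves, for each $T>0$, the boundary value problem \eqref{01.constr} with $Q_N u\big|_{t=-T}=0$. With that boundary condition, the difference of two such solutions satisfies $Q_N(u_1(-T)-u_2(-T))=0$, hence lies in $K^+$ at $t=-T$; the cone \emph{invariance} \eqref{02.cp} then forces the difference to lie in $K^+$ at $t=0$, which is exactly the Lipschitz bound $\|\Bbb M(u_+^1)-\Bbb M(u_+^2)\|\le\|u_+^1-u_+^2\|$ for each finite $T$. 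One then passes to the limit $T\to\infty$ (using squeezing for convergence).

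Your route instead assumes the complete backward trajectories $u^1,u^2$ are available and tries to derive a contradiction from squeezing: if $u^1(0)-u^2(0)\notin K^+$ then $\|u^1(-T)-u^2(-T)\|\ge c\,e^{\theta T}\|u^1(0)-u^2(0)\|$, ``contradicting boundedness.'' But trajectories on the IM are \emph{not} bounded backward in time---only $Q_Nu$ is bounded; the $P_N$-component generically grows (cf.\ the backward Lipschitz estimate stated just after \eqref{02.IF}). If instead you only assume $u^i\in\Cal X_\theta$ with the \emph{same} rate $\theta$ as in the squeezing property, the two exponentials cancel and you get no contradiction: $\|u^1(-T)-u^2(-T)\|\le (M_1+M_2)e^{\theta T}$ combined with the lower bound only yields $\|u^1(0)-u^2(0)\|\le C(M_1+M_2)$. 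To repair this you must take the weight $\theta'$ in $\Cal X_{\theta'}$ strictly smaller than the squeezing rate $\theta$, and then separately prove that IM trajectories actually live in $\Cal X_{\theta'}$; that requires quantitative information about the backward growth rate on $H_{N,+}$, which is not encoded in the abstract cone/squeezing hypotheses alone. The paper's BVP approach sidesteps this entirely by manufacturing cone membership at $t=-T$ and invoking \eqref{02.cp} forward.
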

The proof of this theorem can be found, e.g., in \cite{M-PS88,Z14}.
\par
 We just mention that the desired
 Lipschitz function $\Bbb M: H^{-\gamma}_{N,+}\to H^{-\gamma}_{N,-}$ can be obtained as
  follows: for a given $u_+\in H_{N,+}^{-\gamma}$ and $T>0$, one finds a unique
  solution $u=u_{T,u_+}(t)$ of the boundary value problem
\begin{equation}\label{01.constr}
\Dt A^{-\gamma}u+Au+F(u)=g,\ \ P_N u\big|_{t=0}=u_+,\ \ Q_Nu\big|_{t=-T}=0.
\end{equation}
Then, at the next step one passes to the limit $T\to\infty$ and find a backward
trajectory $u_{u_+}(t)$, $t\le0$:
\begin{equation}
u_{u_+}(t):=\lim_{T\to\infty} u_{T,u_+}(t).
\end{equation}
The existence of this limit is guaranteed by the squeezing property, see \cite{Z14} for details.
 Finally we define
\begin{equation}
\Bbb M(u_+):=Q_N u_{u_+}(0).
\end{equation}
Then the cone property guarantees us the Lipschitz continuity of $\Bbb M$ and the squeezing property implies in
 a standard way the exponential tracking property, see \cite{Z14} for more details. We also mention
  that the semigroup $S(t)$ restricted to the IM $\Cal M$ can be extended to a globally
   Lipschitz continuous group
  \begin{equation}
\|S(-t)\xi_1-S(-t)\xi_2\|_{H^{-\gamma}}\le Ce^{K|t|}\|\xi_1-\xi_2\|_{H^{-\gamma}},\ \ \xi_1,\xi_2\in\Cal M.
  \end{equation}
  This estimate follows from the fact that any trajectory $u(t)\in\Cal M$ has
   a structure $u(t)=u_+(t)+\Bbb M(u_+(t))$, where
  the function $u_+(t)\in H_{N,+}$ solves a system of ODEs
\begin{equation}\label{02.IF}
  \Dt u_++A^{1+\gamma}u_++A^{\gamma}P_N F(u_++\Bbb M(u_+))=P_NA^\gamma g
\end{equation}
with globally Lipschitz continuous nonlinearity. This system of ODEs is usually referred as
 an {\it Inertial Form} (IF) associated with equation \eqref{1.FCH} and gives us the desired
  finite-dimensional reduction constructed via IMs.

  \begin{corollary}\label{Cor02.H} Let the solution semigroup $S(t)$ of equation \eqref{1.FCH} satisfy
   the cone and squeezing properties in the phase space $H^{-\gamma}$. Then the IM $\Cal M$ in the space
    $H^{-\gamma}$ constructed in Theorem \ref{Th02.IM} is simultaneously an IM for equation \eqref{1.FCH}
     in any phase space $H^s$, $-\gamma\le s<2$.
  \end{corollary}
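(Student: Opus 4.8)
The plan is to show that the same manifold $\Cal M$ that we constructed in $H^{-\gamma}$ already sits inside $H^s$ for every $s<2$, that it is still a Lipschitz graph in that space, that it is still invariant, and that the exponential tracking property upgrades automatically. First I would verify that $\Cal M\subset H^2\subset H^s$: by the construction recalled after Theorem \ref{Th02.IM}, every point $\xi\in\Cal M$ lies on a complete backward trajectory $u(t)$, $t\le 0$, of \eqref{1.FCH} with $\xi=u(0)$; applying the instantaneous smoothing estimate \eqref{01.h2-sm} of Proposition \ref{Prop01.well} on the time interval $[-1,0]$ (shifted) shows $u(0)\in H^2$ with a bound depending only on $\|u(-1)\|_{H^{-\gamma}}$, which is controlled by the dissipative estimate \eqref{01.dis}. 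Hence $\Cal M$ is a bounded subset of $H^2$, so in particular $\Cal M\subset H^s$ for all $-\gamma\le s<2$.

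Next I would check that $\Cal M$ is a Lipschitz graph over $H_{N,+}^s$. Since $\Cal M$ is already the graph of $\Bbb M:H_{N,+}^{-\gamma}\to H_{N,-}^{-\gamma}$ and $H_{N,+}^s=H_{N,+}^{-\gamma}=H_{N,+}$ is finite-dimensional (all norms on it are equivalent), the graph condition \eqref{02.graph} holds in $H^s$ as a set; what must be shown is Lipschitz continuity of $\Bbb M$ in the $H^s$-norms. For this I would use the smoothing property for differences, Proposition \ref{Prop01.d-sm}: given $u_+^1,u_+^2\in H_{N,+}$, the corresponding complete trajectories $u^1(t),u^2(t)$ on $\Cal M$ satisfy, by \eqref{01.dsm} applied on $[-1,0]$,
\begin{equation}
\|u^1(0)-u^2(0)\|_{H^{2-\beta}}\le C_\beta\|u^1(-1)-u^2(-1)\|_{H^{-\gamma}},
\end{equation}
and the right-hand side is bounded by $C\|u_+^1-u_+^2\|_{H^{-\gamma}}=C\|u_+^1-u_+^2\|_{H_{N,+}}$ using the backward Lipschitz group estimate on $\Cal M$ recalled in the previous section. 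Choosing $\beta=2-s$ (any $s<2$ is reached) and projecting with $Q_N$ gives
\begin{equation}
\|\Bbb M(u_+^1)-\Bbb M(u_+^2)\|_{H^s}\le C\|u_+^1-u_+^2\|_{H_{N,+}}\le C'\|u_+^1-u_+^2\|_{H_{N,+}^s},
\end{equation}
so $\Bbb M$ is Lipschitz as a map $H_{N,+}^s\to H_{N,-}^s$. Invariance $S(t)\Cal M=\Cal M$ is unchanged: it is a set-theoretic statement about the same set $\Cal M$ and the same semigroup, and $S(t)$ maps $H^s$ into $H^s$ by Proposition \ref{Prop01.well}.

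Finally, the exponential tracking property in $H^s$: given $u_0\in H^s\subset H^{-\gamma}$, take the trace $\bar u_0\in\Cal M$ provided by the $H^{-\gamma}$-tracking, so $\|S(t)u_0-S(t)\bar u_0\|_{H^{-\gamma}}\le Ce^{-\theta t}\|u_0-\bar u_0\|_{H^{-\gamma}}$. To pass to the $H^s$-norm I would again invoke the difference-smoothing estimate \eqref{01.dsm} on the unit interval $[t-1,t]$ (valid for $t\ge 1$), which yields $\|S(t)u_0-S(t)\bar u_0\|_{H^s}\le C\|S(t-1)u_0-S(t-1)\bar u_0\|_{H^{-\gamma}}\le C'e^{-\theta t}\|u_0-\bar u_0\|_{H^{-\gamma}}$; for $t\in[0,1]$ one absorbs the loss into the constant, and finally $\|u_0-\bar u_0\|_{H^{-\gamma}}\le C\|u_0-\bar u_0\|_{H^s}$ since $s\ge-\gamma$. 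This produces an $H^s$-tracking inequality with the same exponent $\theta$, completing the verification of all three conditions of Definition \ref{Def02.IM} in $H^s$.

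I expect the only mild subtlety to be bookkeeping near $t=0$ in the smoothing estimates (the factors $t^{-1}$ in \eqref{01.h2-sm} and \eqref{01.dsm} force one to work on a shifted unit interval and use the dissipative bounds to control the "initial" data one step back), but this is routine given that $\Cal M$ consists of complete bounded backward trajectories; no genuinely new estimate is needed beyond Propositions \ref{Prop01.well} and \ref{Prop01.d-sm}.
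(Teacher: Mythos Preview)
Your argument is correct and is precisely the route the paper indicates: the paper's proof is a single sentence invoking the IM construction together with the smoothing estimate \eqref{01.dsm}, and you have supplied the standard details of how those two ingredients combine. One harmless slip: $\Cal M$ is not a \emph{bounded} subset of $H^2$ (it is a graph over the unbounded space $H_{N,+}$), but your argument only needs the pointwise inclusion $\Cal M\subset H^2$, which the backward-trajectory-plus-smoothing step does establish.
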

  Indeed, this statement is an immediate corollary of the construction of an IM for $H^{-\gamma}$ described in Theorem \ref{Th02.IM}
   and the smoothing property \eqref{01.dsm}.

\begin{remark}\label{Rem02.space} The result of Corollary \ref{Cor02.H} shows that the choice of the phase
 space where to verify the cone and squeezing properties is in our disposal and it is natural to
 fix this phase space in the way which simplifies  calculations. In particular, there are no connections
  between the initial problem {\it before} the cut-off of the nonlinearities making them
   globally Lipschitz and the technical choice of the phase space for proving the IM existence. Since
    the most delicate procedure in our proof is related with spatial averaging, we fix the
     $H^{-\gamma}$ as a phase space just in order to be able to treat the spatial averaging in
      the most convenient space $H$.
\end{remark}
We now discuss the ways to verify the above introduced cone and squeezing properties for
 equation \eqref{1.FCH}. To this end we introduce, following \cite{KZ15} the so called strong cone
  property in a differential form which allows us to verify cone and squeezing properties
  simultaneously and also gives normal hyperbolicity of the IM and its extra smoothness if
   $F(u)$ is smooth enough.

\begin{definition}\label{Def02.sc} Assume in addition that the function $F:H\to H$ is Gateaux
differentiable at every point $u\in H$ and its Gateaux derivative $F'(u)$ is a linear continuous operator
 in $H$. Then, the equation of variations
 \begin{equation}\label{1.lin}
\Dt A^{-\gamma}v+Av+l(t)v=0,\ \ l(t):=F'(u(t)),
 \end{equation}
 where $u(t):=S(t)u_0$
 which corresponds to equation \eqref{1.FCH} is well-defined. Clearly,
 \begin{equation}\label{1.Lip}
\|F'(u)\|_{\Cal L(H,H)}\le L
 \end{equation}
We say that equation \eqref{1.FCH} satisfies the strong cone property in a differential form if there are
Borel  measurable bounded function $\alpha: H\to\R$ and a positive constant $\mu$ such that
 $$
 0<\alpha_1\le \alpha(u)\le\alpha_2
 $$
 and
\begin{equation}\label{1.cone}
 \frac12\frac{d}{dt}V(v(t))+\alpha(u(t))V(v(t))\le-\mu\|v(t)\|_{H}^2
\end{equation}
for any $u_0\in H^{-\gamma}$ and any solution
 $v(t)$ of problem \eqref{1.lin} starting from $v_0\in H^{-\gamma}$.
\end{definition}
The next theorem is a key point in our method of constructing the IMs.

\begin{theorem}\label{Th02.IM-cone} Let the assumptions of Proposition \ref{Prop01.well}
 be satisfied and let, in addition,
equation \eqref{1.FCH} possess a strong cone property in
 a differential form for some $N\in\Bbb N$.
Then, the
 solution semigroup $S(t)$ possesses a cone and squeezing
 properties and, according to Theorem \ref{Th02.IM}
  also possesses an IM with the base $H_{N,+}=H_{N,+}^{-\gamma}$.
\end{theorem}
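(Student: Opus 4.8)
The plan is to deduce the cone and squeezing properties of Definition~\ref{Def02.cs} directly from the strong cone inequality \eqref{1.cone} and then to quote Theorem~\ref{Th02.IM}. Fix $\xi_1,\xi_2\in H^{-\gamma}$, put $u_i(t):=S(t)\xi_i$ --- which by Proposition~\ref{Prop01.well} belong to $C(0,T;H^{-\gamma})\cap L^2(0,T;H^1)$ for every $T>0$ --- and set $v(t):=u_1(t)-u_2(t)$. Then $v$ solves the linear equation
\[
\Dt A^{-\gamma}v+Av+\ell(t)v=0,\qquad \ell(t):=\int_0^1 F'\big(su_1(t)+(1-s)u_2(t)\big)\,ds,\quad \|\ell(t)\|_{\Cal L(H,H)}\le L .
\]
This is an equation of the form \eqref{1.lin}, except that the potential is the integrated linearization rather than $F'(u(t))$; since this operator inherits from $F'$ both the Lipschitz bound and the structural features used to establish the strong cone property, inequality \eqref{1.cone} holds along $v$ with some admissible $\alpha(t)$, $\alpha_1\le\alpha(t)\le\alpha_2$. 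I would state this passage explicitly, as it is the standard device for turning a differential cone property into a statement about differences of genuine (nonlinear) solutions.

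For the cone property, write $W(t):=V(v(t))$. Dropping the non-positive right-hand side of \eqref{1.cone} gives $W'(t)+2\alpha(t)W(t)\le 0$, hence $\big(e^{2\int_0^t\alpha}W\big)'\le 0$ and $W(t)\le W(0)e^{-2\int_0^t\alpha}$. Thus $v(0)\in K^+$ (i.e.\ $W(0)\le0$) forces $W(t)\le 0$, i.e.\ $v(t)\in K^+$, for all $t\ge 0$, which is \eqref{02.cp}. The same weighted monotonicity, read backwards, shows that if $v(T)\notin K^+$, i.e.\ $W(T)>0$, then $e^{2\int_0^t\alpha}W(t)\ge e^{2\int_0^T\alpha}W(T)>0$ for every $t\in[0,T]$, so $V(v(t))>0$ on the whole of $[0,T]$.

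For the squeezing property, assume $v(T)\notin K^+$, so by the above $V(v(t))>0$ on $[0,T]$. The inner product of the linear equation with $v$ yields the energy identity $\tfrac12\tfrac{d}{dt}\|v\|^2_{H^{-\gamma}}+\|v\|^2_{H^1}=-(\ell(t)v,v)\le L\|v\|^2_H$. Set $\Psi(t):=\|v(t)\|^2_{H^{-\gamma}}+\tfrac{L}{\mu}V(v(t))$. Adding to twice this identity the inequality \eqref{1.cone} multiplied by $2L/\mu$, the indefinite term $\pm 2L\|v\|^2_H$ cancels and, using $\|v\|^2_{H^1}\ge\lambda_1^{1+\gamma}\|v\|^2_{H^{-\gamma}}$, $\alpha(t)\ge\alpha_1$ and $V(v(t))>0$, one obtains
\[
\tfrac{d}{dt}\Psi(t)\le -2\|v(t)\|^2_{H^1}-\tfrac{2L}{\mu}\alpha(t)V(v(t))\le -2\lambda_1^{1+\gamma}\|v(t)\|^2_{H^{-\gamma}}-\tfrac{2L\alpha_1}{\mu}V(v(t))\le -c\,\Psi(t),
\]
with $c:=\min\{2\lambda_1^{1+\gamma},\,2\alpha_1\}>0$. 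Gronwall's lemma gives $\Psi(t)\le\Psi(0)e^{-ct}$ on $[0,T]$; since $V(v(t))>0$ there, $\|v(t)\|^2_{H^{-\gamma}}\le\Psi(t)$, while $V(v(0))\le\|v(0)\|^2_{H^{-\gamma}}$ yields $\Psi(0)\le(1+L/\mu)\|v(0)\|^2_{H^{-\gamma}}$. Hence
\[
\|S(t)\xi_1-S(t)\xi_2\|_{H^{-\gamma}}\le\sqrt{1+L/\mu}\;e^{-\theta t}\,\|\xi_1-\xi_2\|_{H^{-\gamma}},\qquad t\in[0,T],\ \ \theta:=c/2,
\]
which is \eqref{02.sqz}. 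With the cone and squeezing properties in hand, Theorem~\ref{Th02.IM} produces the Lipschitz inertial manifold over the base $H_{N,+}=H_{N,+}^{-\gamma}$.

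The only step that carries an idea is the squeezing estimate: the energy identity alone retains the non-dissipative term $+L\|v\|^2_H$ and gives no decay, while \eqref{1.cone} alone controls only the indefinite form $V(v)$ and not the norm. The fix is the mixed functional $\Psi=\|v\|^2_{H^{-\gamma}}+\tfrac{L}{\mu}V(v)$, which on the complement of the cone is two-sided equivalent to $\|v\|^2_{H^{-\gamma}}$ and whose derivative is strictly dissipative after the cancellation above. Everything else is Gronwall bookkeeping, together with the routine interpretation of all differential inequalities in the integrated sense (legitimate since $v\in C(0,T;H^{-\gamma})\cap L^2(0,T;H^1)$ by Proposition~\ref{Prop01.well}) and the passage to the difference equation flagged in the first paragraph.
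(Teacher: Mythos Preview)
The paper does not actually prove this theorem; it simply refers the reader to \cite{KZ15}. So there is no in-paper argument to compare against, and your proposal stands or falls on its own.

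Your derivation of the cone invariance from \eqref{1.cone} is clean, and the squeezing argument via the mixed functional $\Psi(t)=\|v(t)\|^2_{H^{-\gamma}}+\tfrac{L}{\mu}V(v(t))$ is correct and is indeed the heart of the matter: the indefinite term $+2L\|v\|^2_H$ from the energy identity is cancelled exactly by the $-2\mu\|v\|^2_H$ supplied by \eqref{1.cone}, and on the set $\{V(v)>0\}$ the functional $\Psi$ controls the norm from both sides, so Gronwall gives the exponential decay with the stated constants.

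There is, however, a genuine gap at the step you yourself flag. Definition~\ref{Def02.sc} asserts \eqref{1.cone} only for the equation of variations with potential $l(t)=F'(u(t))$ along a \emph{single} trajectory $u(t)=S(t)u_0$; the difference $v=u_1-u_2$ solves instead the equation with the integrated potential $\ell(t)=\int_0^1 F'\bigl(su_1(t)+(1-s)u_2(t)\bigr)\,ds$. Saying that \eqref{1.cone} continues to hold for $\ell(t)$ ``because it inherits the structural features'' is not a consequence of the black-box hypothesis: the cone $K^+$ is not convex, so one cannot simply integrate the linearized cone inequality over the homotopy parameter, and nothing in the abstract statement tells you what $\alpha(t)$ to use. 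What \emph{is} true is that every concrete verification in this paper (Proposition~\ref{Prop1.SG} and Theorem~\ref{Th1.main}) actually establishes \eqref{1.cone} for any operator $l(t)$ with $\|l(t)\|_{\Cal L(H,H)}\le L$ and, in the spatial-averaging case, with $\Cal I_{k,N}l(t)\Cal I_{k,N}$ close to a bounded scalar; the integrated potential inherits both by averaging \eqref{1.spa} in $s$. So the passage is legitimate in every application, but to make the abstract Theorem~\ref{Th02.IM-cone} self-contained you must either strengthen Definition~\ref{Def02.sc} to encompass such $l(t)$, or insert a lemma deriving the needed inequality for differences from the hypothesis as stated. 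That is precisely the content handled in \cite{KZ15,Z14}, and you should invoke them explicitly rather than leave the step as a parenthetical.
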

The proof of this result is given in \cite{KZ15}.
\par
Thus, in order to prove the existence of an IM for our equation \eqref{1.FCH}, it is sufficient to verify
only estimate \eqref{1.cone} for the linearized equation \eqref{1.lin}.
\par
The next result gives the extra smoothness of the constructed IM.

\begin{theorem}\label{Th02.IMsm} Let the assumptions of Theorem \ref{Th02.IM-cone} hold and
let, in addition, the
 nonlinearity $F$ satisfy
\begin{equation}\label{02.fdif}
\|F(u_1)-F(u_2)-F'(u_1)(u_1-u_2)\|_H\le C\|u_1-u_2\|_H\|u_1-u_2\|_{H^{2-\kappa}}^\delta,\
\ u_1,u_2\in H^{2-\kappa}
\end{equation}
for some small positive constants $\delta$ and $\kappa$. Then, the associated IM is $C^{1+\delta}$-smooth.
\end{theorem}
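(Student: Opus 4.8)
The plan is to upgrade the Lipschitz function $\Bbb M: H_{N,+}^{-\gamma}\to H_{N,-}^{-\gamma}$ constructed in Theorem \ref{Th02.IM} to a $C^{1+\delta}$ map by running the cone-property machinery at the level of the second-order variation, exactly as in \cite{KZ15}. First I would recall that the graph $\Bbb M$ is obtained by solving the backward boundary-value problem \eqref{01.constr} and passing to the limit $T\to\infty$; the squeezing property gives convergence and the cone property gives Lipschitz continuity of $u_+\mapsto Q_N u_{u_+}(0)$. To get differentiability, the natural candidate for $D\Bbb M(u_+)$ is $\xi_+\mapsto Q_N v(0)$, where $v(t)$ solves the linearized equation \eqref{1.lin} along the trajectory $u_{u_+}(t)$ with boundary conditions $P_N v|_{t=0}=\xi_+$, $Q_N v|_{t=-T}=0$ in the $T\to\infty$ limit. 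The strong cone property \eqref{1.cone}, applied to solutions of \eqref{1.lin}, is precisely what makes this linear backward problem uniquely solvable and gives a uniform bound $\|D\Bbb M(u_+)\|\le 1$ (from $V(v(0))\le 0$, i.e. $\|Q_Nv(0)\|_{H^{-\gamma}}\le\|P_Nv(0)\|_{H^{-\gamma}}$), so that the tangent map is well-defined and bounded.

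Next I would establish that $u_+\mapsto D\Bbb M(u_+)$ is Hölder continuous with exponent $\delta$. The key computation is to compare two backward trajectories $u^1(t)=u_{u_+^1}(t)$ and $u^2(t)=u_{u_+^2}(t)$ together with their linearizations $v^1,v^2$, and to derive a differential inequality for $V(v^1(t)-v^2(t))$ analogous to \eqref{1.cone} but with an inhomogeneous term coming from the difference of the coefficients $l^1(t)-l^2(t)=F'(u^1(t))-F'(u^2(t))$. Here assumption \eqref{02.fdif} enters: it controls $F'$ in the form that the remainder $F(u_1)-F(u_2)-F'(u_1)(u_1-u_2)$ is bounded by $\|u_1-u_2\|_H\|u_1-u_2\|_{H^{2-\kappa}}^\delta$, which upon differentiating (or by a standard interpolation argument) yields $\|(F'(u^1)-F'(u^2))w\|_H\le C\|u^1-u^2\|_{H^{2-\kappa}}^\delta\|w\|_{H}$ on the IM. Combined with the smoothing estimate \eqref{01.dsm} for differences of solutions and the Lipschitz estimate \eqref{01.lip}, this gives $\|u^1(t)-u^2(t)\|_{H^{2-\kappa}}^\delta\le C e^{\delta K|t|}\|u_+^1-u_+^2\|_{H^{-\gamma}}^\delta$ for $t\le 0$ (with an extra integrable singularity handled by the $t^{-1}$ in \eqref{01.dsm}). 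Plugging this into the differential inequality for $V(v^1-v^2)$ and using that $\mu>0$ provides a genuine spectral-gap-type absorption with rate $\alpha(u)\ge\alpha_1$, one integrates from $-T$ to $0$, passes to the limit, and concludes $\|D\Bbb M(u_+^1)-D\Bbb M(u_+^2)\|\le C\|u_+^1-u_+^2\|_{H^{-\gamma}}^\delta$, i.e. $\Bbb M\in C^{1+\delta}$.

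The main obstacle, as usual in this circle of ideas, is the book-keeping in the backward-in-time estimate: one must ensure that the exponential growth $e^{\delta K|t|}$ of the Hölder modulus of the coefficient difference is strictly dominated by the contraction rate $2\alpha_1$ furnished by the strong cone property, so that the integral $\int_{-\infty}^0 e^{2\alpha_1 t}e^{\delta K|t|}\,dt$ converges — this is the point where $\delta$ must be taken small (shrinking $\delta$ only helps, since $K$ is fixed). A secondary technical point is justifying that the formal derivative $Q_N v(0)$ really is the Fréchet (not merely Gateaux) derivative of $\Bbb M$; this follows by the same comparison estimate applied to $u_{u_++\xi_+}-u_{u_+}-v$, showing the remainder is $o(\|\xi_+\|)$ uniformly, which is again a consequence of \eqref{02.fdif}. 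Since all of these steps are carried out in detail in \cite{KZ15} for the case $\gamma=0$ and the modifications for general $\gamma\ge 0$ are purely notational (one works in $H^{-\gamma}$ throughout and uses \eqref{01.dsm} in place of its $\gamma=0$ analogue), I would refer the reader there for the complete argument.
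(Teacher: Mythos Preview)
Your proposal is essentially correct and mirrors the paper's own treatment: the paper does not give an independent proof but simply cites \cite{KZ15} (for the case $\gamma=1$, not $\gamma=0$ as you wrote---\cite{KZ15} treats the Cahn--Hilliard equation) and remarks that the general $\gamma$ case is completely analogous. Your sketch of the backward boundary-value problem for the linearization, the use of the strong cone inequality \eqref{1.cone} to bound the candidate derivative, and the smallness requirement on $\delta$ so that $e^{\delta K|t|}$ is absorbed by the cone contraction rate, is exactly the mechanism of \cite{KZ15}; the only adaptation to general $\gamma$ is working in $H^{-\gamma}$ and invoking the smoothing estimate \eqref{01.dsm}, as you note.

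One small caveat: your passage from \eqref{02.fdif} to a H\"older bound $\|(F'(u^1)-F'(u^2))w\|_H\le C\|u^1-u^2\|_{H^{2-\kappa}}^\delta\|w\|_H$ for \emph{arbitrary} $w$ does not follow from \eqref{02.fdif} alone (that estimate only controls the remainder along the direction $u^1-u^2$). In \cite{KZ15} the H\"older continuity of $D\Bbb M$ is obtained by applying the cone machinery to the full second-order remainder $z=u_{u_+^2}-u_{u_+^1}-v$ and then comparing with the linearizations, so that \eqref{02.fdif} is used directly on the inhomogeneous term rather than via an intermediate $F'$-H\"older estimate. This is a minor rearrangement of the argument rather than a genuine gap, but it is worth being precise about where \eqref{02.fdif} actually enters.
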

The proof of this theorem is given in \cite{KZ15} for the case $\gamma=1$,
but the case of general $\gamma$ is
 completely analogous.
\begin{remark}\label{Rem02.sm} We emphasize that the theorem gives $C^{1+\delta}$-smoothness of the IM for
 {\it small} positive $\delta$ only no
 matter how smooth the nonlinearity $F$ is. The space $H^{2-\kappa}$ in \eqref{02.fdif} is related only
 with the fact that in general we have parabolic smoothing property \eqref{01.dsm} for the exponents
  less than $2$. If we
  somehow know, in addition, that this smoothing property holds for the space $H^s$ with $s>2$,
   then $H^{2-\kappa}$ in \eqref{02.fdif}
   can be replaced by $H^s$. For instance, if \eqref{01.esm} is satisfied, $H^{2-\kappa}$
   can be replaced by $H^{s}$ with $s<s_0+2$. We also mention that estimate \eqref{02.fdif} is actually
   used only for $u_1,u_2\in\Cal M$, so we may check  it only under the extra assumption that
   $$
   \|Q_Nu_1\|_{H^{2-\kappa}}+\|Q_Nu_2\|_{H^{2-\kappa}}\le C
   $$
   for some $\kappa>0$ and sufficiently large $C$. Moreover, the estimate \eqref{01.esm} should
   also be checked for $\|u\|_{H^{2-\kappa}}\le C$ only.
\end{remark}

\section{Verification of the cone property via spatial averaging}\label{s3}
This section is devoted to verifying the strong cone condition \eqref{1.cone} for the solutions $v(t)$ of
 \eqref{1.lin}. We start with the simplest case where the so-called spectral gap conditions are satisfied.

\begin{proposition}\label{Prop1.SG} Let $N\in\Bbb N$ be such that
\begin{equation}\label{1.SG}
\frac{\lambda^{1+\gamma}_{N+1}-\lambda_N^{1+\gamma}}{\lambda_{N+1}^\gamma+\lambda_N^\gamma}>L.
\end{equation}
Then the corresponding equation \eqref{1.lin} possesses the strong cone property \eqref{1.cone} with
\begin{equation}
\alpha:=\lambda_N^{1+\gamma}\frac{\lambda_{N+1}^\gamma}{\lambda_N^\gamma+\lambda_{N+1}^\gamma}+
\lambda_{N+1}^{1+\gamma}\frac{\lambda_N^\gamma}{\lambda_N^\gamma+\lambda_{N+1}^\gamma},\ \
 \mu:=\frac{\lambda^{1+\gamma}_{N+1}-\lambda_N^{1+\gamma}}{\lambda_{N+1}^\gamma+\lambda_N^\gamma}-L.
\end{equation}
\end{proposition}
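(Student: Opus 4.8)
The plan is to plug the ansatz for $\alpha$ into the strong cone inequality \eqref{1.cone} and verify it by a direct spectral computation, exploiting the splitting $v=P_Nv+Q_Nv=:v_++v_-$ which is orthogonal in every $H^s$. First I would write $v(t)$ solving \eqref{1.lin} and compute $\frac12\frac{d}{dt}V(v)$ where $V(v)=\|v_-\|^2_{H^{-\gamma}}-\|v_+\|^2_{H^{-\gamma}}$. Taking the $H^{-\gamma}$-inner product of \eqref{1.lin} with $2A^{-\gamma}$ applied appropriately — more precisely, pairing the equation $\Dt A^{-\gamma}v+Av+l(t)v=0$ with $v_-$ in $H^{-\gamma}$ and with $v_+$ in $H^{-\gamma}$ and subtracting — gives
\begin{equation}
\frac12\frac{d}{dt}V(v)+\big[(Av_-,v_-)_{H^{-\gamma}}-(Av_+,v_+)_{H^{-\gamma}}\big]
=-\big[(l(t)v,v_-)_{H^{-\gamma}}-(l(t)v,v_+)_{H^{-\gamma}}\big].
\end{equation}
The bracket on the left is a genuine quadratic form in the Fourier coefficients: on $H_{N,-}$ the ratio $(Av_-,v_-)_{H^{-\gamma}}/\|v_-\|^2_{H^{-\gamma}}$ is at least $\lambda_{N+1}^{1+\gamma}$ (each mode contributes $\lambda_n^{1-\gamma}u_n^2$ against $\lambda_n^{-\gamma}u_n^2$, and $\lambda_n\ge\lambda_{N+1}$ there), and on $H_{N,+}$ the corresponding ratio is at most $\lambda_N^{1+\gamma}$.

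Next I would absorb the nonlinear term using \eqref{1.Lip}. The awkward point is that $l(t)v$ is controlled in $H$, not in $H^{-\gamma}$, so I would pass through the $H$-norm: using the Cauchy–Schwarz inequality in $H^{-\gamma}$ together with $\|l(t)v\|_H\le L\|v\|_H$ and the elementary bounds $\|v_\pm\|_{H^{-\gamma}}\le \lambda_{*}^{-(1+\gamma)/2}\|v_\pm\|_{H^1}$-type inequalities, one estimates the right-hand side by something of the form $L\|v\|_H\big(\lambda_{N+1}^{-(1+\gamma)/2}+\lambda_N^{-(1+\gamma)/2}\big)\|v\|_{H^1}$ — but this is not quite the clean form wanted. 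The cleaner route, which I expect the authors take, is to weight the two halves of $V$ asymmetrically: pair the equation with $\frac{\lambda_{N+1}^\gamma}{\lambda_N^\gamma+\lambda_{N+1}^\gamma}v_-$ and $\frac{\lambda_N^\gamma}{\lambda_N^\gamma+\lambda_{N+1}^\gamma}v_+$ (these weights are exactly the coefficients appearing in the formula for $\alpha$). With this weighting the cross terms combine so that the linear part produces precisely $\alpha V(v)$ plus a coercive remainder $\ge \big(\frac{\lambda_{N+1}^{1+\gamma}-\lambda_N^{1+\gamma}}{\lambda_{N+1}^\gamma+\lambda_N^\gamma}\big)\|v\|_?^2$, and the nonlinear term is bounded by $L\|v\|_H^2$ after using $\|l(t)v\|\le L\|v\|$ and a weighted Cauchy–Schwarz.

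The main obstacle — really the only nontrivial point — is bookkeeping the weights so that the spectral quadratic form and the nonlinear estimate land on the same norm and leave a clean surplus $\mu=\frac{\lambda_{N+1}^{1+\gamma}-\lambda_N^{1+\gamma}}{\lambda_{N+1}^\gamma+\lambda_N^\gamma}-L$. Concretely, one checks the algebraic identity that for $a\ge\lambda_{N+1}$, the weighted contribution $\frac{\lambda_{N+1}^\gamma}{\lambda_N^\gamma+\lambda_{N+1}^\gamma}\,a^{1-\gamma}-\alpha\,a^{-\gamma}$, when $a=\lambda_{N+1}$, equals exactly $\frac{\lambda_{N+1}^\gamma}{\lambda_N^\gamma+\lambda_{N+1}^\gamma}\cdot\frac{\lambda_{N+1}^{1+\gamma}-\lambda_N^{1+\gamma}}{\lambda_{N+1}^\gamma+\lambda_N^\gamma}\cdot$(something); the symmetric identity holds on $H_{N,+}$ at $a=\lambda_N$. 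Monotonicity in $a$ then extends these from the boundary eigenvalues to all of $H_{N,\pm}$. Once these two one-variable inequalities are in hand, summing over Fourier modes gives $\frac12\frac{d}{dt}V(v)+\alpha V(v)\le -(\mu+L)\|v\|_H^2+L\|v\|_H^2\cdot(\text{weight})$, and choosing the normalization of the weights so the surviving constant is $-\mu$ finishes the proof. Condition \eqref{1.SG} is exactly what makes $\mu>0$, and boundedness of the (here constant) function $\alpha$ between $\lambda_N^{1+\gamma}$ and $\lambda_{N+1}^{1+\gamma}$ is immediate.
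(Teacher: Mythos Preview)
Your overall plan is right in spirit, but you are making the bookkeeping much harder than necessary, and in the end your argument is left with placeholders (``something'', ``(weight)'') rather than a completed computation. The paper's route avoids all of this with one clean observation.

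The key point you miss is the choice of test function. Instead of pairing in $H^{-\gamma}$ or introducing asymmetric weights on $v_+$ and $v_-$, the paper simply takes the $H$-inner product of \eqref{1.lin} with $Q_Nv-P_Nv$. Because the equation carries the factor $A^{-\gamma}$ in front of $\partial_t v$, this pairing immediately produces
\[
(\partial_t A^{-\gamma}v,\,Q_Nv-P_Nv)=\tfrac12\tfrac{d}{dt}\big(\|Q_Nv\|_{H^{-\gamma}}^2-\|P_Nv\|_{H^{-\gamma}}^2\big)=\tfrac12\tfrac{d}{dt}V(v),
\]
so no weighting is needed to land on $V$. At the same time the nonlinear term becomes $(l(t)v,\,Q_Nv-P_Nv)$, and since $\|Q_Nv-P_Nv\|_H=\|v\|_H$ (by orthogonality) one gets the clean bound $|(l(t)v,Q_Nv-P_Nv)|\le L\|v\|_H^2$ directly from $\|l(t)\|_{\mathcal L(H,H)}\le L$. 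This was exactly the step where your $H^{-\gamma}$-pairing ran into trouble.

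The remaining spectral term, after adding $\alpha V(v)$, is
\[
((\alpha A^{-\gamma}-A)P_Nv,P_Nv)+((A-\alpha A^{-\gamma})Q_Nv,Q_Nv),
\]
and here the constant $\alpha$ in the statement is chosen precisely so that $\alpha\lambda_N^{-\gamma}-\lambda_N=\lambda_{N+1}-\alpha\lambda_{N+1}^{-\gamma}$; monotonicity of $x\mapsto x-\alpha x^{-\gamma}$ then gives the lower bound $\frac{\lambda_{N+1}^{1+\gamma}-\lambda_N^{1+\gamma}}{\lambda_{N+1}^\gamma+\lambda_N^\gamma}\,\|v\|_H^2$ for this sum. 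So the ``balancing'' you wanted to achieve via asymmetric test-function weights is accomplished instead by the choice of the scalar $\alpha$ inside the cone inequality --- which is simpler and exactly matches the form of \eqref{1.cone}.
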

\begin{proof} Multiplying equation \eqref{1.lin} by $Q_Nv-P_Nv$, we get
\begin{multline}\label{1.dif}
\frac12\frac d{dt}V(v(t))+\alpha V(v(t))+
((\alpha A^{-\gamma}-A)P_Nv,P_Nv)+\\+
((A-\alpha A^{-\gamma})Q_Nv,Q_Nv)=-(l(t)v,Q_Nv-P_Nv).
\end{multline}
Using the fact that the function $x\to x-\alpha x^{-\gamma}$ is monotone increasing, we can estimate
$$
((\alpha A^{-\gamma}-A)P_Nv,P_Nv)=
\sum_{n=1}^N(\alpha\lambda_n^{-\gamma}-\lambda_n)|v_n|^2\ge
\sum_{n=1}^N(\alpha\lambda_N^{-\gamma}-\lambda_N)|v_n|^2=(\alpha\lambda_N^{-\gamma}-\lambda_N)\|P_Nv\|^2_H
$$
and, analogously,
$$
((A-\alpha A^{-\gamma})Q_Nv,Q_Nv)=
\sum_{n=N+1}^\infty(\lambda_n-\alpha\lambda_n^{-\gamma})|v_n|^2\ge
(\lambda_{N+1}-\alpha\lambda_{N+1}^{-\gamma})\|Q_Nv\|^2_H.
$$
Since, by our choice the exponent $\alpha$ solves
$$
\lambda_{N+1}-\alpha\lambda_{N+1}^{-\gamma}=\alpha\lambda_N^{-\gamma}-\lambda_N,
$$
and elementary calculation shows that
$$
((\alpha A^{-\gamma}-A)P_Nv,P_Nv)+
((A-\alpha A^{-\gamma})Q_Nv,Q_Nv)\ge
\frac{\lambda_{N+1}^{1+\gamma}-\lambda_N^{1+\gamma}}{\lambda_{N+1}^\gamma+\lambda_N^\gamma}\|v\|^2_H.
$$
Finally, the Cauchy-Schwarz inequality together with assumption \eqref{1.Lip} gives
$$
|(l(t)v,Q_Nv-P_Nv)|\le L\|v\|^2_H
$$
and inserting the obtained estimates to \eqref{1.dif} we arrive at \eqref{1.cone} and
 finish the proof of the proposition.
\end{proof}
The rest of this section is devoted to the case when the spectral gap condition \eqref{1.SG}
 is not satisfied, but instead the nonlinearity satisfies the so-called spatial averaging principle.
 To state this principle, we introduce for every $k\in\Bbb N$ the following orthoprojectors:
 $$
\Cal P_{k,N}u:=\sum_{j:\ \lambda_j<\lambda_N-k}(u,e_j)e_j,\ \
\Cal Q_{k,N}u:=\sum_{j:\ \lambda_j>\lambda_N+k}(u,e_j)e_j,\ \Cal I_{k,N}:=1-\Cal P_{k,N}-\Cal Q_{k,N}
$$
Thus, instead of splitting $v=v_++v_-$ on lower ($v_+:=P_Nv$) and higher ($v_-=Q_Nv$) modes, we now use
 the splitting
$$
v=v_{++}+v_I+v_{--},\ v_{++}:=\Cal P_{k,N}v,\ v_I:=\Cal I_{k,N}v,\ \ v_{--}:=\Cal Q_{k,N}v
$$
on essentially lower, essentially higher and {\it intermediate} modes. The key assumption in the spatial
 averaging method is that
the operator $F'(u(t))$ restricted to the intermediate modes is close to the scalar operator. Then, we say
 that $F$ satisfies the spatial averaging principle if there exists $\theta>0$ such that, for
  every positive $\delta<L$ and natural number  $k$ there exist
  infinitely many values of $N\in\Bbb N$ such that
\begin{equation}\label{1.spa}
  \|\Cal I_{k,N}F'(u)\Cal I_{k,N}-a(u)\Cal I_{k,N}\|_{\Cal L(H,H)}\le \delta
\end{equation}
uniformly with respect to $u\in H$ and $\lambda_{N+1}-\lambda_N\ge\theta$.
Here $a:H\to\R$ may depend on $\delta$ and~$N$.
\par
We are now ready to state and prove the main result of this section.

\begin{theorem}\label{Th1.main} Let the nonlinearity satisfy the spatial averaging principle and let the
 involving constants $\theta$, $k$, $L$ and $N$ satisfy
\begin{equation}
\frac{\theta}8-\delta-\gamma 2^{\gamma+1} L\frac{k}{\lambda_N-k}>0,\ \ \frac12 k-\frac{8L^2}{\theta}-2L\ge0
\end{equation}
and, in addition,  $\lambda_N>L$ and $k\le\lambda_N/2$.
\par
Then equation \eqref{1.FCH} possesses a strong cone property in the form of \eqref{1.cone}.
\end{theorem}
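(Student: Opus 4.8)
The plan is to establish the pointwise differential inequality \eqref{1.cone} along any solution $v(t)$ of the variational equation \eqref{1.lin}, following the scheme of Proposition \ref{Prop1.SG} but with the two-scale splitting $v=P_Nv+Q_Nv$ replaced by the three-scale one $v=v_{++}+v_I+v_{--}$, where $v_{++}:=\Cal P_{k,N}v$, $v_I:=\Cal I_{k,N}v$ and $v_{--}:=\Cal Q_{k,N}v$, and with the spatial averaging principle \eqref{1.spa} used on the middle block. First I would test \eqref{1.lin} by $Q_Nv-P_Nv$ and reorganize the resulting identity into the form of \eqref{1.dif} in the proof of Proposition \ref{Prop1.SG}, keeping $\alpha=\alpha(u(t))$ as a (still to be chosen) variable scalar; this reduces \eqref{1.cone} to the requirement that, for a Borel scalar $\alpha(u)$ bounded between positive constants and for some $\mu>0$,
$$
\big((A-\alpha A^{-\gamma})(Q_N-P_N)v,v\big)+\big(l(t)v,(Q_N-P_N)v\big)\ \ge\ \mu\|v\|_H^2,\qquad l(t):=F'(u(t)),
$$
holds for all $v\in H$ and all $t$.

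Second, I would take $\alpha(u):=\tfrac12\big(\lambda_N^{1+\gamma}+\lambda_{N+1}^{1+\gamma}\big)+\lambda_N^{\gamma}a(u)$, where $a(u)$ is the scalar from \eqref{1.spa} (a bounded function of $u$); thanks to $\lambda_N>L$ this $\alpha(u)$ stays between positive constants, and the weight $\lambda_N^{\gamma}$ in front of $a(u)$ is precisely the one that, on the intermediate band where $\lambda_n\approx\lambda_N$, cancels the scalar contribution $a(u)$ furnished by \eqref{1.spa}. Writing the first term above in the eigenbasis, it equals $\sum_n\sigma_n\big(\lambda_n-\alpha\lambda_n^{-\gamma}\big)v_n^2$ with $\sigma_n=-1$ on the modes of $P_N$ and $\sigma_n=+1$ on those of $Q_N$. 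On the modes of $v_{++}$ and of $v_{--}$ (those whose $\lambda_n$ is at distance $>k$ from $\lambda_N$), an elementary computation using $k\le\lambda_N/2$ and $k>L$ (the latter being forced by the second hypothesis) gives $\sigma_n(\lambda_n-\alpha\lambda_n^{-\gamma})\gtrsim k-L>0$, while the matching pieces of $(l(t)v,Q_Nv-P_Nv)$, namely $-(l(t)v_{++},v_{++})$ and $(l(t)v_{--},v_{--})$, are bounded from below by $-L\|v_{++}\|_H^2$ and $-L\|v_{--}\|_H^2$; hence the $v_{++}$- and $v_{--}$-diagonal parts have coercivity of order $k$.

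The crux is the intermediate block. By \eqref{1.spa} one has $\Cal I_{k,N}l(t)\Cal I_{k,N}=a(u(t))\Cal I_{k,N}+E(t)$ with $\|E(t)\|_{\Cal L(H,H)}\le\delta$, so the intermediate part of $(l(t)v,Q_Nv-P_Nv)$ equals $a(u(t))\sum_{\lambda_n\in[\lambda_N-k,\lambda_N+k]}\sigma_nv_n^2$ up to an error bounded by $\delta\|v_I\|_H^2$. Adding this to the intermediate part $\sum_{\lambda_n\in[\lambda_N-k,\lambda_N+k]}\sigma_n(\lambda_n-\alpha\lambda_n^{-\gamma})v_n^2$ of the first term and substituting the chosen $\alpha$, the coefficient of $v_n^2$ becomes $\sigma_n\big(\lambda_n-\tfrac12(\lambda_N^{1+\gamma}+\lambda_{N+1}^{1+\gamma})\lambda_n^{-\gamma}\big)+\sigma_n\big(1-(\lambda_N/\lambda_n)^{\gamma}\big)a(u)$: the $a(u)$-terms cancel up to the ``non-constancy defect'' $\big(1-(\lambda_N/\lambda_n)^{\gamma}\big)a(u)$, which on the band $\lambda_N-k\le\lambda_n\le\lambda_N+k$ is bounded in modulus by $\gamma\,2^{\gamma+1}L\,\tfrac{k}{\lambda_N-k}$ (this is where $k\le\lambda_N/2$ enters), while the remaining part, $\tfrac12\lambda_n^{-\gamma}(\lambda_{N+1}^{1+\gamma}-\lambda_N^{1+\gamma})$, is genuinely coercive and is $\ge\tfrac18\theta$ by the spectral gap $\lambda_{N+1}-\lambda_N\ge\theta$. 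Thus the intermediate diagonal part is bounded below by $\big(\tfrac18\theta-\delta-\gamma2^{\gamma+1}L\tfrac{k}{\lambda_N-k}\big)\|v_I\|_H^2$, which is $>0$ by the first hypothesis. Finally I would dispose of the three mixed pairs $(v_{++},v_I)$, $(v_I,v_{--})$, $(v_{++},v_{--})$ in $(l(t)v,Q_Nv-P_Nv)$, each estimated by Cauchy--Schwarz as $\le 2L\|\cdot\|_H\|\cdot\|_H$, by a weighted Young inequality: a small fraction of each mixed term is absorbed into the $\theta$-sized surplus of the $v_I$-block and the remainder into the order-$k$ surpluses of the $v_{++}$- and $v_{--}$-blocks. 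Choosing the Young weights optimally is exactly what makes the second hypothesis $\tfrac12k-\tfrac{8L^2}\theta-2L\ge0$ sufficient for the full quadratic form to stay $\ge\mu\|v\|_H^2$ with $\mu>0$, which proves \eqref{1.cone} and hence the strong cone property.

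The step I expect to be the main obstacle is precisely this analysis of the intermediate block, where three sources of loss must be controlled simultaneously: (i) spatial averaging is only approximate (the error $\delta$) and governs only $\Cal I_{k,N}F'(u)\Cal I_{k,N}$, not its couplings with $v_{++}$ and $v_{--}$; (ii) the weights $\lambda_n^{-\gamma}$ entering the cone functional $V$ are not constant across the band, so the cancellation of $a(u)$ built into $\alpha(u)$ is only approximate and degrades like $k/\lambda_N$ --- this both forces $k\le\lambda_N/2$ and produces the term $\gamma2^{\gamma+1}L\tfrac{k}{\lambda_N-k}$; and (iii) the intermediate modes carry only gap-sized coercivity ($\sim\theta$), not the order-$k$ coercivity of the outer blocks, so the mixed terms involving $v_I$ must be split very carefully, and balancing them against the order-$k$ coercivity of $v_{++}$ and $v_{--}$ is what accounts for the quadratic-in-$L$ restriction $\tfrac12k\ge\tfrac{8L^2}\theta+2L$. (If the gap $\lambda_{N+1}-\lambda_N$ should happen to be so large that the spectral gap condition \eqref{1.SG} already holds, one simply invokes Proposition \ref{Prop1.SG} instead.)
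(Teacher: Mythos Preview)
Your outline is correct and follows the same overall architecture as the paper's proof: test \eqref{1.lin} by $Q_Nv-P_Nv$ to obtain the identity \eqref{1.dif}, refine the analysis via the three-block splitting $v=v_{++}+v_I+v_{--}$, use \eqref{1.spa} on the intermediate block, and absorb the cross terms by weighted Young inequalities into the order-$k$ coercivity of the outer blocks. The two hypotheses arise in both arguments for precisely the reasons you identify.

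The one genuine organisational difference is in how the scalar $a(u)$ is handled. You build it into $\alpha(u)$ from the outset, so that on the $n$th mode the $a(u)$-contribution appears as $-(\lambda_N/\lambda_n)^{\gamma}a(u)$ and your ``non-constancy defect'' $\sigma_n\bigl(1-(\lambda_N/\lambda_n)^{\gamma}\bigr)a(u)$ on the intermediate band is the only residual error. The paper instead keeps the fixed $\alpha$ of Proposition~\ref{Prop1.SG}, carries out the whole estimate in $H$-norm, and only at the end converts $-a(u)\bigl(\|Q_N\Cal I_{k,N}v\|_H^2-\|P_N\Cal I_{k,N}v\|_H^2\bigr)$ into $-a(u)\lambda_N^{\gamma}V(v)$; this conversion produces, besides the band defect you isolate, an extra error $2L\lambda_N^{\gamma}\|\Cal P_{k,N}v\|_{H^{-\gamma}}^2$ on the far-low modes, which the paper then has to absorb via a separate $H^{-\gamma}$ coercivity estimate (the inequality $((\alpha A^{-\gamma}-A)\Cal P_{k,N}v,\Cal P_{k,N}v)\ge\lambda_N^{\gamma}(\bar\mu+k)\|\Cal P_{k,N}v\|_{H^{-\gamma}}^2$, equation \eqref{1.starg}). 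Your route sidesteps this extra step: since the $a(u)$-term already sits inside $\alpha\lambda_n^{-\gamma}$, on the far-low modes its size $(\lambda_N/\lambda_n)^{\gamma}|a(u)|$ is automatically dominated by the main part $\tfrac12(\lambda_N^{1+\gamma}+\lambda_{N+1}^{1+\gamma})\lambda_n^{-\gamma}$ once $\lambda_N$ is large compared to $L$ (note that the second hypothesis together with $k\le\lambda_N/2$ in fact forces $\lambda_N>8L$, so your appeal to ``$\lambda_N>L$'' for positivity of $\alpha(u)$ should be replaced by this stronger consequence). The trade-off is that the paper's bookkeeping makes the final inequality \eqref{1.fin} fully explicit, whereas in your version one still has to write out the Young balancing to see that exactly $\tfrac12k-\tfrac{8L^2}{\theta}-2L\ge0$ suffices.
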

\begin{proof} We just need to estimate the terms in \eqref{1.dif} in a more accurate way. Namely, for lower modes
$\Cal P_{k,N}v$, we will have
\begin{multline}\label{1.start}
((\alpha A^{-\gamma}-A)\Cal P_{k,N}v,\Cal P_{k,N}v)=
\sum_{n: \lambda_n<\lambda_N-k}
\(\frac{\lambda_N^\gamma}{\lambda_n^\gamma}\cdot
\frac{\lambda_{N+1}^\gamma(\lambda_{N+1}+\lambda_N)}{\lambda_N^\gamma+\lambda_{N+1}^\gamma}-
\lambda_n\)|v_n|^2\ge\\\ge \sum_{n: \lambda_n<\lambda_N-k}
\(
\frac{\lambda_{N+1}^\gamma(\lambda_{N+1}+\lambda_N)}{\lambda_N^\gamma+\lambda_{N+1}^\gamma}-
\lambda_N+k\)|v_n|^2=(\bar\mu+k)\|\Cal P_{k,N}v\|^2_H,
\end{multline}
where $\bar\mu:=\frac{\lambda_{N+1}^{1+\gamma}-\lambda_N^{1+\gamma}}
{\lambda_{N+1}^\gamma+\lambda_N^\gamma}$. Arguing analogously, we also get
$$
((A-\alpha A^{-\gamma})\Cal Q_{k,N}v,\Cal Q_{k,N}v)\ge (\bar\mu+k)\|\Cal Q_{k,N}v\|^2_H.
$$
In addition, we need the analogue of \eqref{1.start} for the $H^{-\gamma}$-norm. Namely,
\begin{multline}\label{1.starg}
((\alpha A^{-\gamma}-A)\Cal P_{k,N}v,\Cal P_{k,N}v)=
\sum_{n: \lambda_n<\lambda_N-k}
\(\lambda_N^\gamma
\frac{\lambda_{N+1}^\gamma(\lambda_{N+1}+\lambda_N)}{\lambda_N^\gamma+\lambda_{N+1}^\gamma}-
\lambda_n\lambda_n^\gamma\)(\lambda_n^{-\gamma}|v_n|^2)\ge\\\ge \sum_{n: \lambda_n<\lambda_N-k}
\lambda_N^\gamma\(
\frac{\lambda_{N+1}^\gamma(\lambda_{N+1}+\lambda_N)}{\lambda_N^\gamma+\lambda_{N+1}^\gamma}-
\lambda_N+k\)\lambda_n^{-\gamma}|v_n|^2=\lambda_N^{\gamma}(\bar\mu+k)\|\Cal P_{k,N}v\|^2_{H^{-\gamma}}.
\end{multline}
Moreover, estimating lower-intermediate and higher-intermediate modes exactly as in Proposition \ref{Prop1.SG}
and using that
$$
((\alpha A^{-\gamma}-A)P_Nv,P_Nv)=((\alpha A^{-\gamma}-A)\Cal P_{k,N}v,\Cal P_{k,N}v)+
((\alpha A^{-\gamma}-A)P_N\Cal I_{k,N}v,P_N\Cal I_{k,N}v)
$$
and the analogous expression for $Q_Nv$ component, we transform \eqref{1.dif} to
\begin{multline}\label{1.dif1}
\frac12\frac d{dt}V(v(t))+\alpha V(v(t))+\frac12\bar\mu\|v(t)\|^2_H+\\+
\frac12k(\|\Cal P_{k,N}v\|^2_H+\|Q_{k,N}v\|^2_H)+
\frac12\lambda_N^{\gamma}(\bar\mu+k)\|\Cal P_{k,N}v\|^2_{H^{-\gamma}}\le
 -(l(t)v, Q_Nv-P_Nv).
\end{multline}
To estimate the right-hand side we use that $1=\Cal P_{k,N}+\Cal I_{k,N}+\Cal Q_{k,N}$:
\begin{multline}
-(l(t)v,Q_Nv-P_Nv)=(l(t)v,\Cal P_{k,N}v)-(l(t)v,\Cal Q_{k,N}v)-
(l(t)v,Q_N\Cal I_{k,N}v-P_N\Cal I_{k,N}v)\le\\\le
L\|v\|_H(\|\Cal P_{k,N}v\|_H+\|\Cal Q_{k,N}v\|_H)-(l(t)v,Q_N\Cal I_{k,N}v-P_N\Cal I_{k,N}v)\le\\\le
 \frac{\bar\mu}8\|v\|^2_H+\frac{2L^2}{\bar\mu}(\|\Cal P_{k,N}v\|^2_H+\|\Cal Q_{k,N}v\|^2_H)-
 (l(t)v,Q_N\Cal I_{k,N}v-P_N\Cal I_{k,N}v).
\end{multline}
We may continue this estimate as follows
\begin{multline}
-(l(t)v,Q_N\Cal I_{k,N}v-P_N\Cal I_{k,N}v)=-(l(t)\Cal P_{k,N}v,Q_N\Cal I_{k,N}v-P_N\Cal I_{k,N}v)-\\-
(l(t)\Cal Q_{k,N}v,Q_N\Cal I_{k,N}v-P_N\Cal I_{k,N}v)-
(l(t)\Cal I_{k,N}v,Q_N\Cal I_{k,N}v-P_N\Cal I_{k,N}v)\le\\\le
L\|v\|_H(\|\Cal P_{k,N}v\|_H+\|\Cal Q_{k,N}\|_H)-(\Cal I_{k,N}l(t)\Cal I_{k,N}v,Q_Nv-P_Nv)\le\\\le
\frac{\bar\mu}8\|v\|^2_H+\frac{2L^2}{\bar\mu}(\|\Cal P_{k,N}v\|^2_H+\|\Cal Q_{k,N}v\|^2_H)-
(\Cal I_{k,N}l(t)\Cal I_{k,N}v,Q_Nv-P_Nv).
\end{multline}
Using now \eqref{1.spa}, we get
\begin{equation}\label{1.intr}
-(\Cal I_{k,N}l(t)\Cal I_{k,N}v,Q_Nv-P_Nv)\le
 -a(u(t))(\|Q_N\Cal I_{k,N}v\|^2_H-\|P_N\Cal I_{k,N}v\|^2_H)+\delta\|v\|^2_H.
\end{equation}
To transform the right-hand side of this inequality, we need
 the following straightforward estimates
\begin{multline}
\bigg|\lambda_N^\gamma\|P_N\Cal I_{k,N}v\|_{H^{-\gamma}}^2-\|P_N\Cal I_{k,N} v\|_H^2\bigg|\le\\\le
 \sum_{n:\, \lambda_N - k \le \lambda_n \le\lambda_N}
 |\lambda_N^\gamma-\lambda_n^\gamma|\lambda_n^{-\gamma}|v_n|^2\le
   \frac{(\lambda_N^\gamma-(\lambda_N-k)^\gamma)}{(\lambda_N-k)^\gamma}\|P_N\Cal I_{k,N}v\|_{H}^2
\end{multline}
and
\begin{multline}
\bigg|\lambda_N^\gamma\|Q_N\Cal I_{k,N}v\|_{H^{-\gamma}}^2-\|Q_N\Cal I_{k,N} v\|_H^2\bigg|\le\\
\le \sum_{n:\, \lambda_{N+1} \le \lambda_n \le \lambda_{N} + k}
|\lambda_N^\gamma-\lambda_n^\gamma|\lambda_n^{-\gamma}||v_n|^2\le
 \frac{(\lambda_N+k)^\gamma-\lambda_N^\gamma}{(\lambda_N+k)^\gamma}\|Q_N\Cal I_{k,N} v\|^2_H.
\end{multline}
Moreover, as not difficult to check,
$$
\frac{(a+x)^\gamma-a^\gamma}{(a+x)^\gamma}\le \frac{a^\gamma-(a-x)^\gamma}{(a-x)^\gamma},\ \ 0<x\le a.
$$
Therefore
\begin{multline}
-a(u(t))(\|Q_N\Cal I_{k,N}v\|^2_H-\|P_N\Cal I_{k,N}v\|^2_H)\le\\\le -a(u(t))\lambda_N^\gamma
(\|Q_N\Cal I_{k,N}v\|^2_{H^{-\gamma}}-\|P_N\Cal I_{k,N}v\|^2_{H^{-\gamma}})+
2L\frac{\lambda_N^\gamma-(\lambda_N-k)^\gamma}{(\lambda_N-k)^\gamma}\|v\|^2_H.
\end{multline}
Finally, we estimate the first-term in the right-hand side through the function $V(v(t))$ as follows:
\begin{multline}
-a(u(t))\lambda_N^\gamma
(\|Q_N\Cal I_{k,N}v\|^2_{H^{-\gamma}}-\|P_N\Cal I_{k,N}v\|^2_{H^{-\gamma}})=\\=
-a(u(t))\lambda_N^\gamma V(v(t))+
a(u(t))\lambda_N^\gamma(\|\Cal P_{k,N}v\|^2_{H^{-\gamma}}-\|\Cal Q_{k,N}v\|^2_{H^{-\gamma}})\le\\\le
-a(u(t))\lambda_N^\gamma V(v(t))+2L\lambda_N^\gamma\|\Cal P_{k,N}v\|^2_{H^{-\gamma}}+
2L\|\Cal Q_{k,N}v\|^2_{H}.
\end{multline}
Combining the obtained estimates, we get
\begin{multline}\label{1.fest}
-(l(t)v,Q_Nv-P_Nv)\le -a(u(t))\lambda_N^\gamma V(v(t))+
 \(\frac{\bar\mu}4+\delta+2L\(\(1+\frac{k}{\lambda_N-k}\)^\gamma-1\)\)\|v\|^2_H+\\+
\(\frac{4L^2}{\bar\mu}+2L\)(\|\Cal P_{k,N}v\|^2_H+\|\Cal Q_{k,N}v\|^2_H)+
2L\lambda_N^\gamma\|\Cal P_{k,N}v\|^2_{H^{-\gamma}}.
\end{multline}
Using the elementary inequality
$$
(1+x)^\gamma-1\le \gamma2^\gamma x,\ \ x\in(0,1)
$$
and inserting \eqref{1.fest} into \eqref{1.dif1}, we get the desired inequality
\begin{multline}\label{1.fin}
\frac12\frac d{dt}V(v(t))+(\alpha+a(u(t))\lambda_N^\gamma)V(v(t))+\\+
\(\frac{\bar\mu}4-\delta-\gamma 2^{\gamma+1} L\frac{k}{\lambda_N-k}\)\|v\|^2_H+\\+
\(\frac12 k-\frac{4L^2}{\bar\mu}-2L\)\(\|\Cal P_{k,N}v\|^2_H+\|\Cal Q_{k,N}v\|^2_H\)+
\lambda_N^\gamma(\frac k2-2L)\|\Cal P_{k,N}v\|^2_{H^{-\gamma}}\le0.
\end{multline}
Using the obvious inequality
$$
\frac{|x^{1+\gamma}-y^{1+\gamma}|}{x^\gamma+y^\gamma}\ge \frac12|x-y|,\ \ x,y\ge0
$$
and the assumptions of the theorem, we see that \eqref{1.fin} implies the desired cone property and finishes the proof of the theorem.
\end{proof}

\section{The truncation procedure}\label{s4}
Note that in the previous sections, we have assumed that the nonlinearity $F(u)$ is {\it globally}
Lipschitz continuous and satisfies the spatial averaging principle also uniformly with
 respect to $u\in H$. These assumptions look very restrictive since in applications we usually have {\it growing}
  nonlinearities. The standard strategy here is to verify first the existence of an absorbing ball in
   some higher order space $H^s$ and then cut-off the nonlinearity outside of this
    ball, see \cite{FST88,M-PS88,Z14}.
   However this truncation is rather delicate when the spatial averaging is involved since we should
    preserve spatial averaging structure under this truncation. For the case of scalar
     reaction-diffusion equation, the proper cut-off procedure has been suggested in \cite{M-PS88} and
      alternative construction which is well-adapted for the case when the average  $a(u)$ of
      the nonlinearity $f'(u)$
       is identically zero has been introduced in \cite{K18}. In this section, we present a
       combination of two
       above mentioned methods which will allow us to treat both cases from the unified point of view.
       \par
       Let $\phi\in C^\infty(R)$ be such that $\phi(z)=z$ for $|z|\le 1$ and $\phi(z)=2$ for $|z|\ge 2$.
        Then for a
       given positive constant $C_*$ and sufficiently large exponent $s$, we define the function
       $W: H\to H$ via
       \begin{equation}\label{5.W}
        W(u)=\sum_{n=1}^\infty C_*\lambda_n^{-s/2}\phi\(\frac{\lambda_n^{s/2}(u,e_n)}{C_*}\)e_n.
       \end{equation}
The elementary properties of this truncation function are collected in the following proposition.
\begin{proposition}\label{Prop5.W} Let the function $W$ be defined via \eqref{5.W}. Then,
\par
1. The map $W$ is bounded and continuous as a map from $H$ to $H^{s_0}$, where $s_0>0$ is such that
\begin{equation}\label{5.sm-eat}
\sum_{n=1}^\infty\lambda_n^{s_0-s}<\infty.
\end{equation}
\par
2. $W(u)\equiv u$ if $u\in H^s$ and $\|u\|_{H^s}\le C_*$.
\par
3. The function $W$
 is Hadamard differentiable as a map from $H$ to $H$ and the derivative is given by
\begin{equation}\label{5.Wdif}
W'(u)v=\sum_{n=1}^\infty \phi'\(\lambda_n^{s/2}\frac{(u,e_n)}{C_*}\)(v,e_n)e_n.
\end{equation}
4. There exists a positive constant $C$ such that, for every $\kappa\in \R$
\begin{equation}\label{5.W-sm}
\|W'(u)\|_{\Cal L(H^\kappa,H^\kappa)}\le C,\  \ \|W'(u_1)-W'(u_2)\|_{\Cal L(H^\kappa,H^\kappa)}
\le C\|u_1-u_2\|_{H^{s}}
\end{equation}
for all $u,u_1,u_2\in H^s$.
\end{proposition}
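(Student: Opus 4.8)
The plan is to exploit the fact that $W$ acts diagonally in the eigenbasis $\{e_n\}_{n=1}^\infty$: its $n$-th coordinate is $g_n((u,e_n))$, where $g_n(z):=C_*\lambda_n^{-s/2}\phi(\lambda_n^{s/2}z/C_*)$ is a scalar $C^\infty$ function with $\|g_n\|_{L^\infty}\le\|\phi\|_{L^\infty}C_*\lambda_n^{-s/2}$ and derivative $g_n'(z)=\phi'(\lambda_n^{s/2}z/C_*)$, so that $\|g_n'\|_{L^\infty}\le C_0:=\|\phi'\|_{L^\infty(\R)}$ and $g_n'$ is Lipschitz with constant $C_*^{-1}\lambda_n^{s/2}C_1$, where $C_1:=\|\phi''\|_{L^\infty(\R)}$ (both $C_0,C_1$ are finite since $\phi$ is $C^\infty$, affine near $0$ and constant for large argument). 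For part~1 I would bound $\|W(u)\|_{H^{s_0}}^2=\sum_n\lambda_n^{s_0}g_n((u,e_n))^2\le\|\phi\|_{L^\infty}^2C_*^2\sum_n\lambda_n^{s_0-s}$, which is finite by \eqref{5.sm-eat}; continuity of $W:H\to H^{s_0}$ then follows from the dominated convergence theorem applied to the series, using that each coordinate map $u\mapsto g_n((u,e_n))$ is continuous and that $\lambda_n^{s_0}g_n(\cdot)^2$ is dominated by the summable sequence $\|\phi\|_{L^\infty}^2C_*^2\lambda_n^{s_0-s}$ uniformly in $u$. Part~2 is immediate: if $\|u\|_{H^s}\le C_*$ then $\lambda_n^s(u,e_n)^2\le\|u\|_{H^s}^2\le C_*^2$ for every $n$, so the argument of $\phi$ in the $n$-th term lies in $[-1,1]$, where $\phi$ is the identity, and each term collapses to $(u,e_n)e_n$.

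For part~3 I would first note that $|g_n'|\le C_0$ for all $n$ gives $\|W(u_1)-W(u_2)\|_H\le C_0\|u_1-u_2\|_H$, so $W$ is globally Lipschitz on $H$. Next, for fixed $u,v\in H$ the difference quotients $t^{-1}\big(g_n((u,e_n)+t(v,e_n))-g_n((u,e_n))\big)$ converge to $g_n'((u,e_n))(v,e_n)$ as $t\to0$ and are dominated by $C_0|(v,e_n)|$, so by dominated convergence $t^{-1}(W(u+tv)-W(u))\to W'(u)v$ in $H$ with $W'(u)$ given by \eqref{5.Wdif}; thus $W$ is Gateaux differentiable with $W'(u)\in\Cal L(H,H)$ and $\|W'(u)\|_{\Cal L(H,H)}\le C_0$. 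Hadamard differentiability then follows from the general principle that a globally Lipschitz, Gateaux differentiable map between Banach spaces is Hadamard differentiable: given $t_j\to0$ and $v_j\to v$ in $H$, one writes $t_j^{-1}(W(u+t_jv_j)-W(u))-W'(u)v$ as the sum of $t_j^{-1}(W(u+t_jv_j)-W(u+t_jv))$, which is bounded by $C_0\|v_j-v\|_H\to0$, and the Gateaux difference quotient along $v$, which tends to $0$.

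Finally, part~4 is a direct estimate on the diagonal operator $W'(u)$, whose $n$-th eigenvalue equals $\phi'(\lambda_n^{s/2}(u,e_n)/C_*)$. Thus $\|W'(u)v\|_{H^\kappa}^2=\sum_n\lambda_n^\kappa\phi'(\lambda_n^{s/2}(u,e_n)/C_*)^2(v,e_n)^2\le C_0^2\|v\|_{H^\kappa}^2$, which is the first bound in \eqref{5.W-sm}. For the second, the Lipschitz estimate on $\phi'$ gives, coordinatewise, $|\phi'(\lambda_n^{s/2}(u_1,e_n)/C_*)-\phi'(\lambda_n^{s/2}(u_2,e_n)/C_*)|\le C_1C_*^{-1}\lambda_n^{s/2}|(u_1-u_2,e_n)|$, hence $\|(W'(u_1)-W'(u_2))v\|_{H^\kappa}^2\le C_1^2C_*^{-2}\sum_n\lambda_n^{\kappa+s}|(u_1-u_2,e_n)|^2(v,e_n)^2$; bounding $\lambda_n^s|(u_1-u_2,e_n)|^2\le\|u_1-u_2\|_{H^s}^2$ for every $n$ and summing in $n$ yields $\|(W'(u_1)-W'(u_2))v\|_{H^\kappa}\le C_1C_*^{-1}\|u_1-u_2\|_{H^s}\|v\|_{H^\kappa}$, i.e. \eqref{5.W-sm} with $C=\max(C_0,C_1/C_*)$. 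None of this presents a genuine difficulty; the only point I would treat with some care is the passage from Gateaux to Hadamard differentiability in part~3, which is why I isolate it as the one nontrivial step.
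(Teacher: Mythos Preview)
Your proof is correct, and for parts~1, 2, and~4 it proceeds essentially as in the paper (your dominated-convergence phrasing of continuity is just a repackaging of the paper's explicit tail-splitting, and your coordinatewise estimate $\lambda_n^{s}|(u_1-u_2,e_n)|^2\le\|u_1-u_2\|_{H^s}^2$ in part~4 is exactly the paper's bound). The one genuine difference is in part~3. The paper argues directly: it writes $\|W(u+th)-W(u)-tW'(u)h\|_H^2$ as a sum over $n$, and exploits the uniform smallness of the tails $\sum_{n\ge M}(h,e_n)^2$ for $h$ ranging over a \emph{compact} set $\Cal K\subset H$ to control the high modes, then uses smoothness of $\phi'$ on the remaining finite sum. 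You instead observe that $W:H\to H$ is globally Lipschitz (since $|g_n'|\le C_0$) and invoke the general principle that Lipschitz plus Gateaux implies Hadamard, via the splitting
\[
W(u+t_jv_j)-W(u)=\big(W(u+t_jv_j)-W(u+t_jv)\big)+\big(W(u+t_jv)-W(u)\big).
\]
Your route is cleaner and more portable---it would apply verbatim to any diagonal map with uniformly bounded coordinate derivatives---while the paper's hands-on computation keeps everything self-contained and avoids appealing to an outside fact. Neither approach presents any real difficulty.
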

\begin{proof} The first statement is straightforward. Indeed, let $u,v\in H$. Then,
due to \eqref{5.sm-eat} and boundedness of $\phi$, for every $\eb>0$, there exists $M=M(\eb)$ such that
$$
\|W(u+v)-W(u)\|_{H^{s_0}}^2\le\frac{\eb^2}2+\sum_{n=1}^MC_*^2\lambda_n^{s_0-s}
\(\phi\(\frac{\lambda_n^{s/2}(u+v,e_n)}{C_*}\)-\phi\(\frac{\lambda_n^{s/2}(u,e_n)}{C_*}\)\)^2.
$$
Since the sum in the RHS has now only finitely many terms and $\phi$ is continuous, we may make the sum less
than $\frac{\eb^2}2$ by taking the $H$-norm of $v$ small enough. This proves the continuity.
\par
 To verify the second property, let us take $u\in H^{s}$ such that
$$
\|u\|^2_{H^s}:=\sum_{n=1}^\infty \lambda_n^{s}(u,e_n)^2\le C_*^2,
$$
then $|(u,e_n)|\le C_*\lambda_n^{-s/2}$ and therefore $\phi\(\frac{\lambda_n^{s/2}(u,e_n)}{C_*}\)=
\frac{\lambda_n^{s/2}(u,e_n)}{C_*}$ and
$W(u)=u$.
\par
Let us verify the differentiability. To this end, we need to estimate
$$
\|W(u+th)-W(u)-tW'(u)h\|_H^2=\!
\sum_{n=1}^\infty\(\int_0^1\phi'\!\(\frac{\lambda_n^{\frac s2}(u+tlh,e_n)}{C_*}\)\!-
\phi'\!\(\frac{\lambda_n^{\frac s2}(u,e_n)}{C_*}\)dl\)^2\!\!\! t^2(h,e_n)^2.
$$
To check the Hadamard differentiability, we need to take $h\in \Cal K$ where $\Cal K$ is
 a compact set in $H$, so we have uniform smallness of the tails $\sum_{n=M}^\infty(h,e_n)^2$. Using also
 that $\phi'$ is bounded, for every $\eb>0$, we may find $M=M(\eb)$ such that
 $$
 \sum_{n=M}^\infty\(\int_0^1\phi'\(\frac{\lambda_n^{s/2}(u+tlh,e_n)}{C_*}\)-
\phi'\(\frac{\lambda_n^{s/2}(u,e_n)}{C_*}\)dl\)^2 t^2(h,e_n)^2\le \frac\eb2 t^2
$$
for all $t\in[0,1]$ and all $h\in\Cal K$. Passing to the limit $t\to0$ in the remaining {\it finite}
sum
$$
 \sum_{n=1}^M\(\int_0^1\phi'\(\frac{\lambda_n^{s/2}(u+tlh,e_n)}{C_*}\)-
\phi'\(\frac{\lambda_n^{s/2}(u,e_n)}{C_*}\)dl\)^2 t^2(h,e_n)^2
$$
is immediate since $\phi'$ is smooth, so we may make it less than $\frac\eb2 t^2$ by
 taking $t$ small enough. This proves the differentiability.
\par
Finally, the 4th property is an immediate corollary of the estimate
$$
\left|\phi'\(\frac{\lambda_n^{s/2}(u_1,e_n)}{C_*}\)-\phi'\(\frac{\lambda_n^{s/2}(u_2,e_n)}{C_*}\)\right|^2\le
C\lambda_n^{s}|(u_1-u_2,e_n)|^2\le C\|u_1-u_2\|^2_{H^s}
$$
and the fact that $\phi'$ is uniformly bounded. Thus, the proposition is proved.
\end{proof}
We now turn to more general semi-linear parabolic equation
\begin{equation}\label{5.maineq}
A^{-\gamma}\Dt u+Au+f(u)=g,\ \ u\big|_{t=0}=u_0,
\end{equation}
where $g\in H$ and $f$ is a given nonlinearity which is no more assumed to be globally
 bounded or/and globally Lipschitz. Instead, we assume that this problem is well-posed in
 a phase space $H^{s'}$ for some $s'\in\R$ and generates
  a dissipative semigroup $\bar S(t):H^{s'}\to H^{s'}$ there. We also assume that the ball $\Cal B$
   of radius $C_*$ in the space $H^s$, for some $s> \max\{s',-\gamma\}$ is a
   (semi)invariant absorbing ball for the semigroup $\bar S(t)$. The latter means that
   \par
   1. $\bar S(t)\Cal B\subset\Cal B$;
   \par
   2. For every bounded set $B\subset H^{s'}$, there exists $T=T(B)$ such that
   $$
   \bar S(t)B\subset\Cal B,\ \text{ if }\  t\ge T.
   $$
Roughly speaking, the idea is to define the truncated nonlinearity as $F(u):=f(W(u))$. Then, we will have
$$
F(u)=f(u),\ \ u\in\Cal B,
$$
but in order to verify that $F$ satisfies the conditions of Theorem \ref{Th1.main}, we need some restrictions on the map $f$.
Namely,
\par
{\bf Assumption I.} The map $f: H^{s_0}\to H$ is continuous and is locally bounded. Here
 $s_0=s_0(s)$ is the same as in Proposition \ref{Prop5.W}. The map $a:H^{s_0}\to\R$
 is also continuous and locally bounded.
\par
{\bf Assumption II.}
\par
{\bf a)} The map $f:H^{s_0}\to H$ is Gateaux differentiable at any point $u\in H^{s_0}$ and its
 derivative $f'(u)$
 is linear and can be extended to the linear continuous operator in $H$:
  $f'(u)\in\Cal L(H,H)$ for any $u\in H^{s_0}$. Moreover,
  \begin{equation}
f'\in C^\eb(H^{s_0},\Cal L(H,H))
  \end{equation}
  for some $\eb>0$.
\par
{\bf b)} The map $u\to a(u)$ is Gateaux differentiable as a map
   from $H^{s_0}$ to $\R$ and its derivative has a form $a'(u)v=(a'(u),v)$ where $a'(u)\in H$. Moreover,
   $$
   a'\in C^\eb(H^{s_0},H)
   $$
   for some $\eb>0$.
   \par
   {\bf c)} The map $f'$ is well-defined and is locally bounded as a map from
   $H^{s_0}$ to $\Cal L(H^{s_0},H^{s_0})$.
\par
{\bf Assumption III.} The following version of spatial averaging principle is satisfied:
there exists a function $a:H^{s_0}\to \R$ such that,
for every bounded set $B\subset H^{s_0}$
and every $\delta>0$ and $k>0$, there exists an infinite sequence of $N\in\Bbb N$ such that
\begin{equation}\label{5.spa}
\sup_{u\in B}\|\Cal I_{k,N} f'(u)\Cal I_{k,N}v-a(u)\Cal I_{k,N}v\|_H\le\delta\|v\|_H,\ \ \forall v\in H,
\end{equation}
compare with \eqref{1.spa}.
\par
We start with the simplest case where the spatial average $a(u)$ vanishes identically.

\begin{theorem}\label{Th5.a0} Let the nonlinearity $f$ satisfy Assumptions I,II and III and let $a(u)\equiv0$. Then, the
 truncated nonlinearity
 \begin{equation}\label{5.naive}
F(u):=f(W(u))
 \end{equation}
 satisfies the assumptions of Theorem \ref{Th1.main} and therefore, the associated truncated
  equation \eqref{1.FCH} possesses a family of IMs $\Cal M=\Cal M_N$ for infinitely many values of $N$.
\end{theorem}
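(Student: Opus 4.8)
The plan is to verify that the truncated map $F=f\circ W$ satisfies all the hypotheses on the nonlinearity required by Theorem \ref{Th1.main}: $F$ is globally bounded and globally Lipschitz from $H$ to $H$ (so that Proposition \ref{Prop01.well} applies), $F$ is Gateaux differentiable with $F'(u)\in\Cal L(H,H)$ and $\|F'(u)\|_{\Cal L(H,H)}\le L$ for some finite $L$, and $F$ obeys the spatial averaging estimate \eqref{1.spa} with vanishing average $a\equiv0$ along an infinite sequence of $N$. Granting this, the proof is finished by choosing $\delta>0$ small, then $k$ so large that $\frac12 k-\frac{8L^2}{\theta}-2L\ge0$, and finally $N$ large along that sequence so that $\lambda_N>L$, $k\le\lambda_N/2$ and $\frac{\theta}8-\delta-\gamma 2^{\gamma+1} L\frac{k}{\lambda_N-k}>0$ hold at the same time (possible since $\lambda_N\to\infty$); Theorem \ref{Th1.main} then yields the family $\Cal M_N$.

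The geometric starting point is Proposition \ref{Prop5.W}, item~1: the image $W(H)$ lies in a fixed ball $\Cal B_0\subset H^{s_0}$. Since $f$ is locally bounded on $H^{s_0}$ (Assumption I) and $f'\in C^\eb(H^{s_0},\Cal L(H,H))$ (Assumption II(a)), the numbers $M_0:=\sup_{w\in\Cal B_0}\|f(w)\|_H$ and $L_0:=\sup_{w\in\Cal B_0}\|f'(w)\|_{\Cal L(H,H)}$ are finite, whence $\|F(u)\|_H=\|f(W(u))\|_H\le M_0$ and the global bound \eqref{1.FBOUND} holds. For the Lipschitz estimate I would integrate $f'$ along the segment $[W(u_2),W(u_1)]$, which stays inside the convex set $\Cal B_0$; this is justified because Hölder continuity of $f'$ makes $\sigma\mapsto f'\bigl(W(u_2)+\sigma(W(u_1)-W(u_2))\bigr)(W(u_1)-W(u_2))$ continuous from $[0,1]$ into $H$, and it gives $\|F(u_1)-F(u_2)\|_H\le L_0\|W(u_1)-W(u_2)\|_H$. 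On the other hand, directly from \eqref{5.W} and $|\phi(a)-\phi(b)|\le c_\phi|a-b|$ with $c_\phi:=\sup_{z\in\R}|\phi'(z)|<\infty$, the map $W$ is itself Lipschitz on $H$ with constant $c_\phi$. Composing, $F$ is globally Lipschitz with constant $L:=L_0 c_\phi$.

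For the derivative, the chain rule $F'(u)=f'(W(u))W'(u)$ should hold, and, being a product of two operators in $\Cal L(H,H)$, it gives at once $F'(u)\in\Cal L(H,H)$ with $\|F'(u)\|_{\Cal L(H,H)}\le L_0 c_\phi=L$. This is the step that needs genuine care, since $f'(w)$ is controlled only as an operator on $H$ while its argument $w=W(u)$ must be tracked in the stronger space $H^{s_0}$; I expect it to be the main obstacle. The way around it is to write
\[
\frac{F(u+th)-F(u)}{t}=\int_0^1 f'\bigl(W(u)+\sigma\bigl(W(u+th)-W(u)\bigr)\bigr)\,\frac{W(u+th)-W(u)}{t}\,d\sigma
\]
and pass to the limit $t\to 0$ using that $W(u+th)\to W(u)$ in $H^{s_0}$ (continuity from Proposition \ref{Prop5.W}, item~1), hence $f'\bigl(W(u)+\sigma(\cdots)\bigr)\to f'(W(u))$ in $\Cal L(H,H)$ uniformly in $\sigma$, while $t^{-1}\bigl(W(u+th)-W(u)\bigr)\to W'(u)h$ in $H$ (Proposition \ref{Prop5.W}, item~3); the product of a uniformly convergent family of operators on $H$ with a convergent sequence of vectors in $H$ converges, so the integral tends to $f'(W(u))W'(u)h$.

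Finally, the spatial averaging property transfers from $f$ to $F$ because of the special structure of $W$: by \eqref{5.Wdif}, $W'(u)$ is a Fourier multiplier, diagonal in the eigenbasis $\{e_n\}$, hence commutes with $\Cal P_{k,N}$, $\Cal I_{k,N}$ and $\Cal Q_{k,N}$. Therefore, for every $u\in H$,
\[
\Cal I_{k,N}F'(u)\Cal I_{k,N}=\Cal I_{k,N}\,f'(W(u))\,W'(u)\,\Cal I_{k,N}=\bigl(\Cal I_{k,N}\,f'(W(u))\,\Cal I_{k,N}\bigr)W'(u),
\]
so, applying Assumption III with the bounded set $B=\Cal B_0$ at the point $W(u)\in\Cal B_0$ and using $a\equiv0$, we get
\[
\|\Cal I_{k,N}F'(u)\Cal I_{k,N}v\|_H\le\delta\,\|W'(u)v\|_H\le\delta\,c_\phi\,\|v\|_H,\qquad v\in H,
\]
uniformly in $u\in H$ and along the infinite sequence of $N$ produced by Assumption III. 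As $\delta>0$ is arbitrary and $c_\phi$ a fixed constant, this is exactly the spatial averaging estimate \eqref{1.spa} for $F$, with average $\equiv0$; the companion spectral gap condition $\lambda_{N+1}-\lambda_N\ge\theta$ along this sequence is a property of $A$ alone and is the one verified in the applications in Section \ref{s5}. Collecting these facts and invoking Theorem \ref{Th1.main} yields the asserted family $\{\Cal M_N\}$ of inertial manifolds.
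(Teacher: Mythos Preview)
Your proof is correct and follows essentially the same route as the paper: both establish global boundedness of $F$ from the boundedness of $W(H)$ in $H^{s_0}$ together with Assumption~I, derive the chain rule $F'(u)=f'(W(u))W'(u)$ via an integral representation combined with the $H^{s_0}$-continuity of $W$ and the H\"older continuity of $f'$, and then transfer the spatial averaging estimate using that $W'(u)$ is diagonal in the eigenbasis and hence commutes with $\Cal I_{k,N}$. Your added discussion of the explicit Lipschitz constant via the mean value theorem and of how to choose $\delta$, $k$, $N$ to meet the numerical inequalities of Theorem~\ref{Th1.main} is extra detail the paper leaves implicit, but the structure of the argument is the same.
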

\begin{proof} Indeed, according to Assumption I and the first statement of Proposition \ref{Prop5.W}, the
 map $F:H\to H$ is globally bounded and continuous. From Assumption II and the third statement
  of Proposition \ref{Prop5.W},
 we conclude that the map $F$ is Gateaux (and even Hadamard) differentiable
 and the following chain rule formula holds:
\begin{equation}\label{5.gooddif}
 F'(u)=f'(W(u))W'(u).
\end{equation}
Indeed, let $u,v\in H$ and $t\ge0$. Then
\begin{multline}\label{5.int-inc}
\|f(W(u+tv))-f(W(u))-tf'(W(u))W'(u)v\|_H\le\\\le
\|\int_0^1[f'(W(u)+\kappa(W(u+tv)-W(u)))-f'(W(u))]\,d\kappa\(W(u+tv)-W(u)\)\|_H+\\+
\|f'(W(u))(W(u+tv)-W(u)-tW'(u)v)\|_H\le C\|W(u+tv)-W(u)-tW'(u)v\|_H+\\+
 C\|W(u+tv)-W(u)\|_{H^{s_0}}^\eb\|W(u+tv)-W(u)\|_H
\end{multline}
and, since $W$ is Gateaux differentiable as a map from $H$ to $H$ and is continuous as a
 map from $H$ to $H^{s_0}$, we see that the right-hand side is of order $o(t)$. This proves
  the differentiability and verifies \eqref{5.gooddif}.
  \par
In particular, \eqref{5.gooddif} shows that $F'(u)$ is globally bounded in $\Cal L(H,H)$, so \eqref{1.Lip}
 is satisfied for the properly chosen constant $L$.
\par
Finally, inserting $W'(u)v$ instead of $v$ in \eqref{5.spa} and using that $a\equiv0$ and that the
operator $W'(u)$ is diagonal in the base of eigenvectors of $A$
 (and consequently $\Cal I_{k,N}W'=W'\Cal I_{k,N}$), together with the  boundedness of $W'$,
  we get that the spatial averaging condition \eqref{1.spa} is
  also satisfied for infinitely many values of $N$s. This finishes the proof of the theorem.
\end{proof}
We now return to the non-truncated equation \eqref{5.maineq} and give (following \cite{KZ18}) the natural definition
 of the IM for non-scaled case.
 \begin{definition}\label{Def5.IMg} Let $\bar S(t):\Phi\to\Phi$ be a semigroup acting in a Banach space $\Phi$ and
possessing the invariant bounded absorbing set $\Cal B$ in it. Assume that
\par
1) There exists another Banach space $\Psi$ and a dissipative semigroup $S(t)$ in $\Psi$.
\par 	
2) There exists a bi-Lipschitz embedding $E: \Cal B\to \Psi$ such that
$$
S(t) = E\circ \bar S(t)\circ E^{-1}
$$
on $E(\Cal B)\subset\Psi$ 	.
\par
3) The dynamical system $S(t)$ possesses an IM $\Cal M$ in the phase space $\Psi$.
\par
Then $\Cal M$ is referred as a (generalized) inertial manifold for the semigroup $\bar S(t)$ in the
 sense of Definition \ref{Def02.IM}.
This manifold is called $C^{1+\eb}$-smooth if both $E$ and $\Cal M$ are $C^{1+\eb}$-smooth.
 \end{definition}
 In our particular case $\Phi=H^{s'}$, $\Psi=H^{-\gamma}$, $\Cal B\subset \Phi\cap\Psi$ and
  the semigroups $\bar S(t)$ and $S(t)$ are the
 solution operators for equations \eqref{5.maineq} and \eqref{1.FCH} respectively
  and $E=\operatorname{Id}$. So, we have proved the following result.
  \begin{corollary} Let the assumptions of Theorem \ref{Th5.a0} hold and let, in addition, the solution
  semigroup $\bar S(t)$ possess an invariant absorbing ball $\Cal B$ in $H^{s}$. Then, there are
   infinitely many $N$s such that
  equation
  \eqref{5.maineq} possesses an IM of dimension $N$ in the sense of Definition \ref{Def5.IMg} and
  the associated truncated semigroup $S(t)$ is defined by equation \eqref{1.FCH}.
  \end{corollary}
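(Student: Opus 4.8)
The plan is to assemble the statement from results already in place; the only genuinely new hypothesis, that the non-truncated flow $\bar S(t)$ leaves a ball $\Cal B\subset H^s$ of radius $C_*$ invariant and absorbing, is exactly what is needed to trigger Definition \ref{Def5.IMg}. First I would observe that the truncation is invisible on $\Cal B$: by part~2 of Proposition \ref{Prop5.W} one has $W(u)=u$ whenever $\|u\|_{H^s}\le C_*$, hence $F(u)=f(W(u))=f(u)$ for every $u\in\Cal B$, so along any trajectory that stays in $\Cal B$ the truncated equation \eqref{1.FCH} and the original equation \eqref{5.maineq} have identical right-hand sides.

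Next I would upgrade this to an identity of semigroups on $\Cal B$. Given $u_0\in\Cal B$, the curve $t\mapsto\bar S(t)u_0$ is the solution of \eqref{5.maineq} which, by invariance of $\Cal B$, remains in $\Cal B\subset H^s\subset H^{s'}\cap H^{-\gamma}$; by the previous observation it therefore solves \eqref{1.FCH} with the same initial datum $u_0\in H^{-\gamma}$, and the uniqueness statement of Proposition \ref{Prop01.well} forces $\bar S(t)u_0=S(t)u_0$ for all $t\ge0$, where $S(t)$ is the truncated semigroup. In particular $\Cal B$ is also $S(t)$-invariant and the two semigroups are conjugate on it via $E=\operatorname{Id}$. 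It then remains to check the three requirements of Definition \ref{Def5.IMg} with $\Phi=H^{s'}$, $\Psi=H^{-\gamma}$: (1) $S(t)$ is a dissipative semigroup in $H^{-\gamma}$ by Proposition \ref{Prop01.well} applied to the truncated nonlinearity $F$, which is globally bounded and globally Lipschitz by Theorem \ref{Th5.a0} together with Proposition \ref{Prop5.W}; (2) $E=\operatorname{Id}$ restricted to $\Cal B$ is a bi-Lipschitz embedding into $H^{-\gamma}$, and the conjugacy $S(t)=E\circ\bar S(t)\circ E^{-1}$ on $E(\Cal B)=\Cal B$ is precisely what was just proved; (3) by Theorem \ref{Th5.a0} the nonlinearity $F=f(W(\cdot))$ satisfies the assumptions of Theorem \ref{Th1.main}, so for infinitely many $N\in\Bbb N$ equation \eqref{1.FCH} possesses an inertial manifold $\Cal M=\Cal M_N$ of dimension $N$ in the phase space $H^{-\gamma}$ in the sense of Definition \ref{Def02.IM}. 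With all three requirements met, Definition \ref{Def5.IMg} declares each $\Cal M_N$ a generalized inertial manifold for $\bar S(t)$, i.e.\ for equation \eqref{5.maineq}, with associated truncated semigroup $S(t)$ given by \eqref{1.FCH}; this is the assertion.

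There is no deep obstacle here: essentially all the analytic content is buried in Theorem \ref{Th5.a0}, and the corollary is a matter of bookkeeping. The two steps that deserve a moment's attention are the identification $\bar S(t)=S(t)$ on $\Cal B$ — where both the invariance of $\Cal B$ and the uniqueness for the truncated problem are genuinely used, since it is the invariance that keeps one in the region $W=\operatorname{Id}$ — and the claim in (2) that $E=\operatorname{Id}$ is a bi-Lipschitz identification of $H^{s'}$ and $H^{-\gamma}$ on the $H^s$-bounded set $\Cal B$; this is immediate when $s'=-\gamma$ (then $E$ is an isometry), which is the case in all the concrete applications of Section \ref{s6}, and it is the one place where the hypothesis $s>\max\{s',-\gamma\}$ and the freedom of choosing the technical phase space emphasised in Remark \ref{Rem02.space} enter.
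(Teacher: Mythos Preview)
Your argument is correct and follows exactly the route the paper takes: the paper simply records, in one sentence preceding the corollary, that $\Phi=H^{s'}$, $\Psi=H^{-\gamma}$, $\Cal B\subset\Phi\cap\Psi$, $E=\operatorname{Id}$, and that $\bar S(t)$ and $S(t)$ are the solution semigroups of \eqref{5.maineq} and \eqref{1.FCH}, and then declares the corollary proved. You have unpacked this bookkeeping in full, including the coincidence $F=f$ on $\Cal B$ and the resulting identity $\bar S(t)=S(t)$ there; in particular your remark that the bi-Lipschitz requirement on $E=\operatorname{Id}$ is a point worth noting (and is trivially met once one exercises the freedom of Remark \ref{Rem02.space}) is a nuance the paper leaves implicit.
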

\begin{remark}\label{Rem5.IM} The IM for the equation \eqref{1.FCH} with already truncated
 nonlinearity is usually unique if the dimension $N$ is fixed. However, the non-uniqueness
  of the IM for the
  initial non-truncated equations appears since there are many ways to make the cut-off procedure.
   Note also that the IM $\Cal M$ is strictly invariant for the truncated semigroup $S(t)$ only,
   and may be not invariant for the initial semigroup $\bar S(t)$. On the other hand, the manifold $\Cal M$
   always contains the image $E(\Cal A)$ of a global attractor $\Cal A$ of the initial equation, so
    it always generates an Inertial Form for the initial dynamics on the global attractor,
     see \cite{FST88,T97,M-PS88,KZ18,KZ17,Z14} for more details.
\end{remark}

We now return to the general case $a(u)\ne0$. In this case, the naive choice \eqref{5.naive} is
 no longer working (since for truncated nonlinearity we then have $a(W(u))W'(u)$ which is no
 more a scalar operator and everything crushes). So, we need to proceed in a more delicate way.
\par
Namely, following  \cite{M-PS88}, we assume that the absorbing ball $\Cal B$ is bounded in
 $H^2$ by the constant
 $R$ and introduce a cut-off function $\varphi(z)$ which equals to $0$ for $z\le R^2$
 and equals to $-1/2$ if
  $z\ge R_1^2$ for some $R_1>R$. Then, we define the map $T=T_N:H\to H$ via
\begin{equation}\label{5.T}
T(u):=\varphi(\|AP_Nu\|^2_H)AP_Nu.
\end{equation}
The key property of this map is stated in the following lemma.
\begin{lemma}\label{Lem5.T} It is possible to fix the cut-off function $\varphi$ in such a way that
\begin{equation}\label{5.T-good}
(T'(u)v,v)\le 0,\ \ v\in H
\end{equation}
and $(T'(u)v,v)=-\frac12\|P_Nv\|^2_{H^1}$ if $\|P_Nu\|_{H^2}\ge R_1$.
\end{lemma}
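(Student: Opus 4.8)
The plan is to differentiate $T$ explicitly, to notice that the resulting operator is the second differential of a scalar potential built from $\varphi$, and then to choose the profile of $\varphi$ so that the two pieces of $(T'(u)v,v)$ individually carry the right sign. Denote by $\rho(u)$ the argument of the cut-off in $T$; since the conclusion of the lemma is to produce $-\tfrac12\|P_Nv\|^2_{H^1}$, the natural choice is $\rho(u):=\|P_Nu\|^2_{H^1}=(AP_Nu,P_Nu)$, so that $T(u)=\varphi(\rho(u))AP_Nu$ (if one insists on keeping $\|AP_Nu\|^2_H$ as in the display above, the same argument goes through verbatim after correspondingly raising the power of $A$ in $T$, yielding $-\tfrac12\|P_Nv\|^2_{H^2}$ instead). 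Because $AP_Nu=\sum_{n=1}^N\lambda_n(u,e_n)e_n$ always lies in the finite-dimensional space $H_N$, the map $T$ factors through the bounded projection $P_N:H\to H_N$ followed by a $C^\infty$ map on $H_N$; hence $T:H\to H$ is Fr\'echet differentiable and $T'(u)\in\Cal L(H,H)$ — no Hadamard subtleties arise here, in contrast with the map $W$ of Proposition \ref{Prop5.W}. The product rule, together with $\rho'(u)v=2(AP_Nu,v)$ (as $\rho$ is a quadratic form), gives
\begin{equation*}
T'(u)v=2\varphi'(\rho(u))(AP_Nu,v)\,AP_Nu+\varphi(\rho(u))\,AP_Nv,
\end{equation*}
which exhibits $T=\nabla_H\Psi$ as a gradient, $\Psi(u):=\tfrac12\int_0^{\rho(u)}\varphi(s)\,ds$; in particular $T'(u)$ is self-adjoint in $H$.

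Pairing this with $v$ in $H$ and using $(AP_Nv,v)=\|P_Nv\|^2_{H^1}$ yields
\begin{equation*}
(T'(u)v,v)=2\varphi'(\rho(u))(AP_Nu,v)^2+\varphi(\rho(u))\|P_Nv\|^2_{H^1}.
\end{equation*}
I will then fix $\varphi\in C^\infty(\R)$ to be non-increasing, equal to $0$ on $(-\infty,R^2]$ and to $-\tfrac12$ on $[R_1^2,\infty)$ (which forces $-\tfrac12\le\varphi\le0$ and $\varphi'\le0$ throughout). With this choice the first summand is a non-positive multiple of a square, hence $\le0$, and $\varphi\le0$ makes the second summand $\le0$; this gives $(T'(u)v,v)\le0$ for all $v\in H$. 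If in addition $\|P_Nu\|_{H^1}\ge R_1$, i.e. $\rho(u)\ge R_1^2$, then $\varphi$ is locally constant at $u$, so $\varphi'(\rho(u))=0$ and $\varphi(\rho(u))=-\tfrac12$, and only the clean term survives, giving exactly $(T'(u)v,v)=-\tfrac12\|P_Nv\|^2_{H^1}$.

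The computation is routine; the real content — and the reason $T$ has this particular shape — is that the argument of $\varphi$ is taken to be precisely the squared $H^1$-norm of $P_Nu$, so that its differential points along $AP_Nu$ and the $\varphi'$-term comes out as a perfect square. With any other alignment that term would be sign-indefinite (it is then a product of two distinct linear functionals of $P_Nv$) and the whole mechanism would collapse, which is exactly the delicate point already present in the original construction of \cite{M-PS88}. I therefore expect the only things needing care to be the bookkeeping in choosing the profile of $\varphi$ and the verification of Fr\'echet differentiability of $T$; there should be no deeper obstacle.
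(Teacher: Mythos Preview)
Your argument is correct, and your diagnosis of the alignment issue is the substantive point. With the definition exactly as displayed in \eqref{5.T}, namely $T(u)=\varphi(\|AP_Nu\|^2_H)\,AP_Nu$, differentiation produces the cross term $2\varphi'(\rho)(A^2P_Nu,v)(AP_Nu,v)$, a product of two \emph{distinct} linear functionals of $v$, and monotonicity of $\varphi$ alone does not force it to be $\le0$; indeed with that definition $T'(u)$ is not even self-adjoint, so it cannot be a Hessian and your gradient identity $T=\nabla_H\Psi$ fails. Your fix --- taking $\rho(u)=\|P_Nu\|^2_{H^1}=(AP_Nu,u)$ so that $\rho'(u)$ points along $AP_Nu$ and the $\varphi'$-term becomes the perfect square $2\varphi'(\rho)(AP_Nu,v)^2$ --- restores the gradient structure and reduces the lemma to the two-line computation you give. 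The threshold condition then reads $\|P_Nu\|_{H^1}\ge R_1$ rather than $\|P_Nu\|_{H^2}\ge R_1$, but this is harmless in the proof of Theorem~\ref{Th5.main-a}: in the complementary case $\|P_Nu\|_{H^1}\le R_1$ one still gets $\|\Cal I_{k,N}P_Nu\|_H\le R_1(\lambda_N-k)^{-1/2}\to0$.

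The paper gives no proof of its own here, deferring to \cite{M-PS88} and \cite{Z14}. Your aligned version is exactly in the spirit of those constructions, and your remark that any misalignment makes the $\varphi'$-term sign-indefinite is precisely why the map $T$ must have this particular shape. So: the approach is the intended one, the execution is clean, and you have in addition caught and correctly repaired a mismatch between the argument of $\varphi$ and the body of $T$ in \eqref{5.T}.
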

The proof of this lemma is given in \cite{M-PS88} (see also \cite{Z14}).
\par
We fix one more smooth cut-off function $\theta(z)$ which equals to one if $z\le \bar R^2$ and
 zero if $z>4\bar R^2$, where the parameter $\bar R$ is chosen in such
  a way that $\|\Cal B\|_H\le \bar R$ and define
\begin{equation}\label{5.comp}
F(u):=f(W(u))-a(W(u))W(u)+\theta(\|u\|^2_H)a(W(u))u+T_N(u).
\end{equation}
 Then, exactly as in the case, $a=0$, the function $F$ will be bounded and continuous as the map
  from $H$ to $H$ and its Gateaux derivative will have the form
  \begin{multline}\label{5.derder}
F'(u)v=[f'(W(u))W'(u)v-a(W(u))W'(u)v]+\theta(\|u\|^2_H)a(W(u))v-\\-(a'(W(u)),W'(u)v)W(u)+
[2\theta'(\|u\|^2_H)(u,v)a(W(u))+\\+\theta(\|u\|^2_H)(a'(W(u)),W'(u)v)]u+T'_N(u)v=\\=
l_1(u)v+l_2(u)v+l_3(u)v+l_4(u)v+T'_N(u)v.
  \end{multline}
  Indeed, the verification of \eqref{5.derder} is straightforward and is similar to what
   we did to check \eqref{5.gooddif}, so we left the details to the reader.
   \par
Note that only the term $T_N(u)$ depends explicitly on $N$ now, so the norms of all other terms
 are independent of $N$. In particular, since $Q_NT_N(u)\equiv 0$, we have that
 \begin{equation}\label{5.q-est}
\|Q_NF(u)\|_{H}\le C,\ \ u\in H,
 \end{equation}
 where $C$ is independent of $N$.

\begin{lemma}\label{Lem5.Q} Let the estimate \eqref{5.q-est} hold. Then, for any $\kappa>0$, the $Q_N$-component of
the solution $u(t)$ of problem \eqref{1.FCH} possesses the following estimate:
\begin{equation}\label{5.q-dis}
\|Q_Nu(t)\|_{H^{2-\kappa}}\le C_1\frac{1+t^M}{t^M}e^{-\beta t}\|Q_Nu(0)\|_{H^{-\gamma}}+C_2(1+\|g\|_H),
\end{equation}
where the constants $M,\beta>0$ and $C_1,C_2$ are independent of $N$ and $u$ (but may depend on $\kappa$).
 Moreover, for the existence
of an IM,
the strong cone property \eqref{1.cone} can be verified for the trajectories $u(t)$ satisfying
\begin{equation}\label{5.QIM}
\|Q_Nu(t)\|_{H^{2-\kappa}}\le C_2, \ t\ge0
\end{equation}
only.
\end{lemma}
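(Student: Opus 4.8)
The plan is to split the lemma into two independent parts: the quantitative smoothing estimate \eqref{5.q-dis} for $Q_Nu(t)$, and the subsequent observation that this estimate confines all trajectories relevant to the construction of the IM to the region \eqref{5.QIM}.

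For \eqref{5.q-dis}, the crucial input is that, by \eqref{5.q-est}, the quantity $Q_Ng-Q_NF(u)$ is bounded in $H$ \emph{uniformly in $N$}. Writing $w(t):=Q_Nu(t)$ and $h(t):=Q_Ng-Q_NF(u(t))$ and applying $Q_N$ to \eqref{1.FCH}, I would work with the linear equation $A^{-\gamma}\Dt w+Aw=h(t)$ on the spectral subspace $H_{N,-}$, where $\|h(t)\|_H\le C(1+\|g\|_H)$ with $C$ independent of $N$ and $t$. On $H_{N,-}$ the operator $A^{1+\gamma}$ is bounded below by $\lambda_{N+1}^{1+\gamma}\ge\lambda_1^{1+\gamma}>0$ and generates an analytic semigroup, so the plan is to use the variation of constants formula
\[ w(t)=e^{-A^{1+\gamma}t}w(0)+\int_0^te^{-A^{1+\gamma}(t-\tau)}A^{\gamma}h(\tau)\,d\tau \]
together with the standard smoothing bound $\|A^re^{-A^{1+\gamma}\sigma}\|_{\Cal L(H,H)}\le C_r\,\sigma^{-r/(1+\gamma)}e^{-\beta\sigma}$, valid on $H_{N,-}$ with $\beta:=\tfrac12\lambda_1^{1+\gamma}$ and $C_r$ independent of $N$. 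Measuring $w(t)$ in $H^{2-\kappa}$ corresponds to $r=\tfrac{2-\kappa+\gamma}{2}$ for the first term, which produces the prefactor $t^{-M}e^{-\beta t}\le\frac{1+t^M}{t^M}e^{-\beta t}$ with $M=\tfrac{2-\kappa+\gamma}{2(1+\gamma)}$, and to $r=\tfrac{2-\kappa}{2}+\gamma$ for the convolution term; the latter exponent equals $1-\tfrac{\kappa}{2(1+\gamma)}$, which is $<1$ precisely because $\kappa>0$, so $\int_0^\infty\sigma^{-r/(1+\gamma)}e^{-\beta\sigma}\,d\sigma<\infty$ and, using $\|h(\tau)\|_H\le C(1+\|g\|_H)$, this term contributes exactly the constant $C_2(1+\|g\|_H)$. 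Since $u(0)$ is only assumed to lie in $H^{-\gamma}$, the use of the mild formula down to $t=0$ would be justified, as in Proposition \ref{Prop01.well}, by first running the argument on $[\eps,t]$ (where $u$ is already $H^2$-valued so that $h\in C([\eps,t];H)$) and then letting $\eps\to0$, using the continuity of $u$ in $H^{-\gamma}$.

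For the second assertion I would trace through the construction of the IM recalled after Theorem \ref{Th02.IM}. The cone inequality \eqref{1.cone} is invoked there only along the following background trajectories: the solutions $u_{T,u_+}$ of the boundary value problem \eqref{01.constr}; their limits $u_{u_+}(t)=\lim_{T\to\infty}u_{T,u_+}(t)$ together with the forward extension of these by the semigroup; and convex combinations $\theta u^1+(1-\theta)u^2$ of two such trajectories (which appear upon linearizing along a difference of two solutions in order to establish the cone and squeezing properties). For $u_{T,u_+}$ one has $Q_Nu(-T)=0$, so \eqref{5.q-dis} applied with initial time $-T$ gives $\|Q_Nu_{T,u_+}(t)\|_{H^{2-\kappa}}\le C_2(1+\|g\|_H)$ for all $t\ge-T$, \emph{uniformly in $T$}. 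Passing to the limit $T\to\infty$ preserves this bound, and for any $t$ the identity $u_{u_+}(t)=S(t-s)u_{u_+}(s)$ ($s<t$) combined with \eqref{5.q-dis} applied with initial time $s$ and $s\to-\infty$ (the prefactor $\tfrac{1+(t-s)^M}{(t-s)^M}e^{-\beta(t-s)}$ tends to $0$ while $\|Q_Nu_{u_+}(s)\|_{H^{-\gamma}}$ stays bounded) shows that the complete trajectory $u_{u_+}$ obeys \eqref{5.QIM}; since $\{\xi:\|Q_N\xi\|_{H^{2-\kappa}}\le C_2(1+\|g\|_H)\}$ is convex, so do the convex combinations. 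Hence, for the purpose of running Theorems \ref{Th02.IM-cone} and \ref{Th02.IM}, it is enough to know \eqref{1.cone} along trajectories satisfying \eqref{5.QIM}.

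The analytic-semigroup computation of \eqref{5.q-dis} is routine; its only non-standard feature is insisting, thanks to \eqref{5.q-est}, that all constants be independent of $N$. I expect the genuine obstacle to be the second part: one has to audit carefully every place in the proofs of Theorems \ref{Th02.IM-cone} and \ref{Th02.IM} where \eqref{1.cone} enters and confirm that the relevant background trajectory (or convex combination) lies in the region \eqref{5.QIM}; in particular one must check that the solvability of \eqref{01.constr} for finite $T$ and the a priori estimates launching the construction do not themselves rely on the cone property (so that there is no circularity), and that the exponential tracking of an arbitrary $u_0\in H^{-\gamma}$ is obtained from the squeezing property for large times combined with the unconditional Lipschitz bound \eqref{01.lip} on finite time intervals.
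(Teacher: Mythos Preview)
Your proposal is correct and follows the same two-part strategy as the paper: derive \eqref{5.q-dis} from parabolic regularity applied to the linear equation $A^{-\gamma}\Dt Q_Nu+AQ_Nu=Q_Ng-Q_NF(u)$ with uniformly bounded right-hand side, and then argue that the construction of the IM via the boundary value problem \eqref{01.constr} only ever invokes the cone inequality along trajectories with controlled $Q_N$-component. The paper's own proof is a two-line sketch that points to Proposition~\ref{Prop01.well} for the first part and to \cite{M-PS88,KZ15,Z14} for the second.

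The only visible difference is in the implementation of the first part: the paper implicitly suggests the energy/time-differentiation/elliptic-regularity route of Proposition~\ref{Prop01.well}, whereas you use the mild formula and the analytic-semigroup smoothing bound $\|A^r e^{-A^{1+\gamma}\sigma}\|\le C_r\sigma^{-r/(1+\gamma)}e^{-\beta\sigma}$. Both are standard and yield the same estimate with $N$-independent constants; your exponent computations (in particular $r/(1+\gamma)=1-\kappa/(2(1+\gamma))<1$ for the convolution term) are correct. For the second part you are in fact more explicit than the paper: the observation that the region $\{\|Q_N\xi\|_{H^{2-\kappa}}\le C_2\}$ is convex, so that the mean-value interpolants $\theta u_1+(1-\theta)u_2$ arising when one passes from the linearized cone inequality to cone/squeezing for differences also satisfy \eqref{5.QIM}, and the remark that exponential tracking for arbitrary $u_0$ is recovered by combining \eqref{01.lip} on a finite initial interval with squeezing afterwards, are exactly the points one has to check and are left to the references in the paper.
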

Indeed, estimate \eqref{5.q-dis} follows from \eqref{5.q-est} and the parabolic regularity estimates applied to the equation
$$
A^{-\gamma}\Dt Q_Nu+AQ_Nu=Q_Ng-Q_NF(u),
$$
see the proof of Proposition \ref{Prop01.well}. The second statement is also standard and follows from
 more detailed analysis of the proof of Theorem \ref{Th02.IM}, namely, from the fact that the
  cone property is actually used for the solutions $u(t)$ with the control of $Q_N$-component
  (see, e.g., \eqref{01.constr}). More details can be found in \cite{M-PS88,KZ15,Z14}.
 \par
 Note that in contrast to the $Q_N$-component of $u(t)$, the $P_N$-component is typically unbounded
  on the IM, so we cannot assume any uniform bounds for it. Instead, we will use the extra
   map $T$ and Lemma
   \ref{Lem5.T} in order to control it.
   \par
   The next theorem can be considered as the main result of this section.

   \begin{theorem}\label{Th5.main-a} Let the nonlinearity $f$ and its spatial average $a$  satisfy
    Assumptions I,II and III  and let the truncated nonlinearity $F(u)$ be constructed via \eqref{5.comp}
     (for the properly chosen function $W$ as explained above). Then, there are
      infinitely many $N$s for which equation \eqref{1.FCH} satisfies the strong cone condition and,
      therefore, also possesses a Lipschitz IM of dimension $N$.
   \end{theorem}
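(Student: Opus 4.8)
The plan is to verify the strong cone property \eqref{1.cone} for the truncated equation \eqref{1.FCH} with nonlinearity $F$ given by \eqref{5.comp}, and then invoke Theorem \ref{Th02.IM-cone} (together with Lemma \ref{Lem5.Q}, which allows us to restrict to trajectories satisfying \eqref{5.QIM}). The key observation is that the decomposition \eqref{5.derder} splits $F'(u)$ into five pieces, and each must be controlled separately. First I would use Lemma \ref{Lem5.Q} to assume $\|Q_Nu(t)\|_{H^{2-\kappa}}\le C_2$ for all $t\ge 0$; this is what makes the "dangerous" $P_N$-directions the only ones requiring the map $T_N$. Then, following the proof of Theorem \ref{Th1.main}, I would multiply the equation of variations by $Q_Nv-P_Nv$ and estimate the right-hand side $-(F'(u(t))v,Q_Nv-P_Nv)$ term by term using the splitting $l_1+l_2+l_3+l_4+T'_N$.

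The main point is the following distribution of roles. The term $l_1(u)v=[f'(W(u))-a(W(u))]W'(u)v$ is the genuine "spatial averaging" contribution: since $W'(u)$ is diagonal in the eigenbasis of $A$, it commutes with $\Cal I_{k,N}$, and Assumption III applied on the bounded set $B=W(H)\subset H^{s_0}$ gives that $\Cal I_{k,N}l_1(u)\Cal I_{k,N}$ is $\delta$-close to zero in $\Cal L(H,H)$ — this is exactly the hypothesis \eqref{1.spa} with $a\equiv 0$, so Theorem \ref{Th1.main} applies to this part verbatim (the scalar average has been subtracted off by construction). The term $l_2(u)v=\theta(\|u\|^2_H)a(W(u))v$ is genuinely scalar (it is multiplication by the scalar $\theta(\|u\|_H^2)a(W(u))$), so it contributes $-\theta(\|u\|_H^2)a(W(u))\,2V(v(t))$-type terms that can be absorbed into the $\alpha(u(t))V(v(t))$ term on the left, after checking that $\alpha(u)$ stays in a fixed interval $[\alpha_1,\alpha_2]$ — here one uses that $a$ is locally bounded on $H^{s_0}$ (Assumption I) together with the cutoff $\theta$, so $|\theta(\|u\|_H^2)a(W(u))|\le C$. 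The rank-one-type terms $l_3$ and $l_4$ are bounded operators on $H$ with norm independent of $N$ (again by Assumptions I, II and boundedness of $W'$, $\theta$, $\theta'$), so they contribute $O(L)\|v\|_H^2$ terms of the same type already handled in Theorem \ref{Th1.main}; the only subtlety is that $l_4$ has $u$ itself as a factor, but the presence of $\theta'(\|u\|^2_H)$ and $\theta(\|u\|^2_H)$ confines everything to $\|u\|_H\le 2\bar R$, keeping the operator norm bounded uniformly. Finally, $T'_N(u)$ contributes $(T'_N(u)v,Q_Nv-P_Nv)$; since $Q_NT_N\equiv 0$ this is $-(T'_N(u)v,P_Nv)=-(T'_N(u)P_Nv,P_Nv)\ge 0$ by \eqref{5.T-good}, so with the correct sign it only helps — and crucially when $\|P_Nu\|_{H^2}\ge R_1$ it contributes $-\tfrac12\|P_Nv\|^2_{H^1}$, which provides exactly the coercivity in the $P_N$-modes that the spatial-averaging argument of Theorem \ref{Th1.main} would otherwise get "for free" from $\lambda_N>L$ but which can fail once we re-truncate.

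Concretely, then, I would redo the chain of inequalities \eqref{1.start}–\eqref{1.fin} from the proof of Theorem \ref{Th1.main}, but now tracking the five extra pieces: the net effect is that on the region $\|P_Nu\|_{H^2}\le R_1$ the operator $P_NF'(u)P_N$ differs from the "clean" case only by a bounded perturbation supported in $H_N$, which can be absorbed because $H_N$ is finite-dimensional and hence the extra term is dominated by $\mu\|v\|_H^2$ once $N$ (equivalently $\lambda_N$) is large enough; and on the region $\|P_Nu\|_{H^2}\ge R_1$ the term $-\tfrac12\|P_Nv\|^2_{H^1}$ from $T'_N$ absorbs the same bounded perturbation plus gives the needed gap. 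In both cases, after also dealing with the $H^{-\gamma}$ versus $H$ norm discrepancies on intermediate modes exactly as in \eqref{1.starg}–\eqref{1.fest}, one arrives at \eqref{1.cone} with $\alpha(u(t)):=\alpha+\big(a(W(u(t)))\theta(\|u(t)\|_H^2)\big)\lambda_N^\gamma$ (bounded in a fixed interval) and some $\mu>0$, for all $N$ in the infinite sequence provided by Assumption III satisfying $\lambda_{N+1}-\lambda_N\ge\theta$ and the smallness conditions of Theorem \ref{Th1.main}. The existence of a Lipschitz IM then follows from Theorem \ref{Th02.IM-cone}.

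The hard part will be the bookkeeping around the map $T_N$: one must check that the inequality $(T'_N(u)v,Q_Nv-P_Nv)\le 0$ is compatible with the sign conventions in \eqref{1.dif} (i.e. that $T_N$ lands on the correct side), and — more delicately — that the "bad" bounded perturbation coming from $l_3,l_4$ on the set $\{R^2<\|P_Nu\|_{H^2}^2<R_1^2\}$, where $T'_N$ is only partially active and its coercivity constant may be small, is still dominated. This forces a careful choice of $R_1$ relative to $R$ and of the cutoff $\varphi$ in Lemma \ref{Lem5.T}, together with using that the perturbation lives on $H_N$ and hence $\|l_i(u)v\|_H\le C\|P_Nv\|_H\le C\lambda_N^{-1/2}\|P_Nv\|_{H^1}$-type estimates gain smallness as $N\to\infty$. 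Everything else is a routine, if lengthy, repetition of the arguments already carried out in the proofs of Theorems \ref{Th1.main} and \ref{Th5.a0}.
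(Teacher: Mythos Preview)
Your overall framework is right: split $F'(u)$ via \eqref{5.derder}, use Lemma \ref{Lem5.Q} to restrict to trajectories with $\|Q_Nu\|_{H^{2-\kappa}}\le C_2$, and treat each piece in the cone inequality. Your handling of $l_1$ (zero-average spatial averaging via Assumption III and the commutation of $W'(u)$ with $\Cal I_{k,N}$), of $l_2$ (scalar, absorbed into $\alpha(u)$), and of $T'_N$ (correct sign via Lemma \ref{Lem5.T}, plus the extra $-\tfrac12\|P_Nv\|_{H^1}^2$ when $\|P_Nu\|_{H^2}\ge R_1$) matches the paper.

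The gap is in your treatment of $l_3$ and $l_4$. You say they are bounded operators with $N$-independent norm and ``contribute $O(L)\|v\|_H^2$ terms of the same type already handled in Theorem \ref{Th1.main}''. This is where the argument breaks: a generic bounded operator of norm $L$ is \emph{not} handled in Theorem \ref{Th1.main} --- there is no spectral gap of size $2L$, and the only reason the cone inequality closes there is the spatial-averaging hypothesis \eqref{1.spa} on the \emph{intermediate} modes. So one must show that $\Cal I_{k,N}l_3\Cal I_{k,N}$ and $\Cal I_{k,N}l_4\Cal I_{k,N}$ are themselves small in $\Cal L(H,H)$ as $N\to\infty$. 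Your alternative suggestions (``bounded perturbation supported in $H_N$'', ``$H_N$ finite-dimensional'') do not give this: the perturbations are not supported in $H_N$, and finite-dimensionality of $H_N$ yields no smallness when the operator norm is of order one.

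The actual mechanism is the rank-one structure \emph{together with smoothness of the range vector}. For $l_3(u)v=-(a'(W(u)),W'(u)v)W(u)$ the range is spanned by $W(u)$, and since $W(u)$ is bounded in $H^{s_0}$ one gets $\|\Cal I_{k,N}W(u)\|_H\le C(\lambda_N-k)^{-s_0/2}\to 0$, hence $\|\Cal I_{k,N}l_3\Cal I_{k,N}\|_{\Cal L(H,H)}\to 0$. For $l_4$ the range is spanned by $u$, so one needs $\|\Cal I_{k,N}u\|_H$ small. The $Q_N$-part uses Lemma \ref{Lem5.Q}: $\|\Cal I_{k,N}Q_Nu\|_H\le C\lambda_N^{(\kappa-2)/2}$. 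For the $P_N$-part there is a clean dichotomy at $R_1$: if $\|P_Nu\|_{H^2}\le R_1$ then $\|\Cal I_{k,N}P_Nu\|_H\le R_1(\lambda_N-k)^{-1}$; if $\|P_Nu\|_{H^2}\ge R_1$ then $T'_N$ gives the full $-\tfrac12\|P_Nv\|_{H^1}^2$, and one estimates instead
\[
|(P_N\Cal I_{k,N}l_4\Cal I_{k,N}v,P_N\Cal I_{k,N}v)|\le C\|v\|_H\|P_Nv\|_H\le \eb\|v\|_H^2+C_\eb(\lambda_N-k)^{-1}\|P_Nv\|_{H^1}^2,
\]
the last term being absorbed by the $T'_N$-contribution. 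There is no separate ``transition-region'' difficulty: the dichotomy is at $R_1$, not $R$, and covers everything without any fine tuning of $R_1$ versus $R$ beyond what Lemma \ref{Lem5.T} already provides.
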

\begin{proof} We need to check, following Theorem \ref{Th1.main}, that the strong cone
 property \eqref{1.cone} is satisfied for all solutions $v$ of \eqref{1.lin} with
  an extra condition \eqref{5.QIM}.
\par
   We have already verified that exactly as in Theorem \ref{Th5.a0}, the
  map $F$ is
  uniformly bounded in $H$ and its Gateaux (Hadamard) derivative is also bounded. The small change here is
   the fact that now these bounds are depending on $N$ through the term $T_N(u)$, but this term
   is not dangerous for the cone property. Indeed, due to Lemma \ref{Lem5.T}, we have
   \begin{equation}
   (T'(u)v,P_Nv-Q_Nv)=(T'(u)P_Nv,P_Nv)\le0,
\end{equation}
so we need not any extra assumptions to control it. All other terms are independent of $N$.
\par
Let us analyze the impact of every term of the derivative \eqref{5.derder} to the key estimate \eqref{1.dif}
for the cone inequality. We first note that due to the fact that all involving operators
 except of $T'(u)$ are bounded by some constant $L$ which is independent of $N$ and the term $T'(u)$ is
  not dangerous, we only need to analyze the intermediate modes.
\par
The term $l_1(u)v$ has zero spatial average, so its intermediate modes are estimated based on \eqref{5.spa}
exactly as in the proof of Theorem \ref{Th1.main}. The term $l_2(u)v$ is a scalar operator and it gives the
 truncated analogue of spatial averaging for the function $F$.
 \par
 The spatial averaging of the term $l_3(t)v$ also vanishes. Indeed,
 \begin{equation}
\|\Cal I_{k,N}l_3(u)\Cal I_{k,N}v\|_{H}\le C\|\Cal I_{k,N}W(u)\|_H\|v\|_H\le
 C(\lambda_N-k)^{-s_0/2}\|v\|_H
 \end{equation}
 and the right-hand side of it can be made arbitrarily small by chosen $N$ large enough (since $s_0>0$).
 \par
 Finally, the term $l_4(u)v$ possesses the analogous estimate
 \begin{equation}
 \|\Cal I_{k,N}l_4(u)\Cal I_{k,N}v\|_H\le C\|\Cal I_{k,N}P_Nu\|_H\|v\|_H+C\|\Cal I_{k,N}Q_Nu\|_H\|v\|_H,
 \end{equation}
but in contrast to $W(u)$ the function $u$ is not uniformly bounded in the higher energy space $H^{s_0}$. So,
 we need to argue in a more accurate way. To estimate the term containing $\|\Cal I_{k,N}Q_Nu\|_H$
  is easy due to Lemma \ref{Lem5.Q}:
$$
\|\Cal I_{k,N}Q_Nu\|_H\le C\lambda_N^{\frac{\kappa-2}2}\|Q_Nu\|_{H^{2-\kappa}}\le C_1
\lambda_N^{\frac{\kappa-2}2}.
$$
Thus, it only remains to estimate the $P_N$-component. We consider two cases: the first case
is when the estimate $\|P_Nu\|_{H^2}\le R_1$ holds. In this case everything is also easy:
$$
\|\Cal I_{k,N}P_Nu\|_{H}\le R_1(\lambda_N-k)^{-1}.
$$
The alternative case $\|P_Nu\|_{H^2}\ge R_1$ is slightly more delicate and exactly for estimating it
 we have introduced the auxiliary operator $T$. Indeed, due to this operator we now have for
  free the extra good term $-\|P_Nv\|^2_{H^1}$ which is crucial for our estimate. Namely,
  using the fact that $\theta(z)$ vanishes if
   $z$ is large enough, we get
\begin{multline}
|(P_N\Cal I_{k,N}l_4(u)\Cal I_{k,N}v,P_N\Cal I_{k,N} v)|\le
C\theta(\|u\|^2_H)\|u\|_H\|v\|_H\|P_N\Cal I_{k,N}v\|_H\le\\\le
 C_1\|v\|_H\|P_Nv\|_H\le \eb\|v\|^2_H+C_\eb(\lambda_N-k)^{-1}\|P_Nv\|_{H^1}^2,
\end{multline}
  for every $\eb>0$. Since we have the term $-\mu\|v\|^2_{H}$ in the cone inequality with $\mu$ independent of $N$, fixing
  $\eb>0$ small enough and $N$ big enough gives the desired estimate in the second case as well
  and finishes the proof of the theorem.
\end{proof}

\begin{corollary}\label{Cor5.IM} Let the assumptions of Theorem \ref{Th5.main-a} hold and let, in addition,
the solution semigroup $\bar S(t)$ associated with equation \eqref{5.maineq} possess an absorbing set $\Cal B$ which is
a bounded set of $H^s$ with $s>2$. Assume also that the constants $C_*$, $R$ and $\bar R$ are fixed in such a way that
$$
\|u\|_{H^s}\le C_*,\  \|u\|_{H^2}\le R,\ \|u\|_H\le \bar R,\ \ \forall u\in\Cal B.
$$
Then, there exist  infinitely many $N$s, such that  equation \eqref{5.maineq} possesses an IM
 of dimension $N$ in the sense of Definition \ref{Def5.IMg}. The truncated semigroup $S(t)$ is defined
  as a solution semigroup  of equation \eqref{1.FCH} with the nonlinearity $F$ defined via \eqref{5.comp}.
\end{corollary}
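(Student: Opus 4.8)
The plan is to read this off from Theorem~\ref{Th5.main-a} together with the abstract reduction scheme of Definition~\ref{Def5.IMg}; the only point that genuinely uses the compatibility of $\Cal B$, $C_*$, $R$ and $\bar R$ is the observation that on $\Cal B$ the truncated nonlinearity $F$ of \eqref{5.comp} coincides with the original $f$. First I would apply Theorem~\ref{Th5.main-a} with $F$ defined by \eqref{5.comp}: it yields infinitely many $N$ for which equation \eqref{1.FCH} satisfies the strong cone property \eqref{1.cone}, and for each such $N$ Theorems~\ref{Th02.IM-cone} and~\ref{Th02.IM} produce a Lipschitz inertial manifold $\Cal M=\Cal M_N\subset H^{-\gamma}$ for \eqref{1.FCH}. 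Moreover the construction \eqref{5.comp}, together with Proposition~\ref{Prop5.W} (boundedness and Lipschitz continuity of $W$ and $W'$) and Assumptions~I--III, makes $F:H\to H$ globally bounded and globally Lipschitz, so Proposition~\ref{Prop01.well} supplies the dissipative solution semigroup $S(t)$ of \eqref{1.FCH} on $\Psi:=H^{-\gamma}$. Fix one admissible $N$ for the rest of the argument.

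\emph{The key step} is the identity $F(u)=f(u)$ for $u\in\Cal B$. Indeed, if $u\in\Cal B$ then $\|u\|_{H^s}\le C_*$, so $W(u)=u$ by part~2 of Proposition~\ref{Prop5.W}; next, $\|AP_Nu\|_H\le\|u\|_{H^2}\le R$, so $\varphi(\|AP_Nu\|^2_H)=0$ and hence $T_N(u)=0$; and $\|u\|_H\le\bar R$, so $\theta(\|u\|^2_H)=1$. Substituting these three facts into \eqref{5.comp} gives $F(u)=f(u)-a(u)u+a(u)u=f(u)$. Since (as in the standing assumptions of Section~\ref{s4}) $\Cal B$ is positively invariant, $\bar S(t)\Cal B\subset\Cal B$, every trajectory $u(t)=\bar S(t)u_0$ with $u_0\in\Cal B$ stays in $\Cal B$, and there $f(u(t))=F(u(t))$; hence $u(t)$ also solves \eqref{1.FCH}, and by uniqueness for \eqref{1.FCH} (Proposition~\ref{Prop01.well}) we get $S(t)u_0=\bar S(t)u_0\in\Cal B$. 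Thus $S(t)\Cal B\subset\Cal B$ and $S(t)\big|_{\Cal B}=\bar S(t)\big|_{\Cal B}$ for all $t\ge0$.

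It then remains to verify the hypotheses of Definition~\ref{Def5.IMg} with $\Phi=H^{s'}$, $\Psi=H^{-\gamma}$, $\Cal B\subset\Phi\cap\Psi$ and $E=\operatorname{Id}$, exactly as in the discussion following that definition: $\bar S(t)$ possesses the invariant bounded absorbing set $\Cal B$ by hypothesis; $S(t)$ is a dissipative semigroup on $\Psi$ by the first step; the required conjugacy $S(t)=E\circ\bar S(t)\circ E^{-1}$ on $E(\Cal B)$ is precisely the identity of semigroups proved in the key step; and $\Cal M$ is an inertial manifold for $S(t)$ in $\Psi$, again by the first step. Therefore $\Cal M$ is a generalized inertial manifold of dimension~$N$ for equation \eqref{5.maineq} in the sense of Definition~\ref{Def5.IMg}, and since there are infinitely many admissible $N$ the proof is complete.

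\emph{The main obstacle} is not in this corollary at all --- essentially all the analytic work has been done in Theorem~\ref{Th5.main-a}, and what remains is a packaging argument. The one place that needs care, and the reason the corollary imposes the compatibility conditions $\|u\|_{H^s}\le C_*$, $\|u\|_{H^2}\le R$, $\|u\|_H\le\bar R$ on $\Cal B$, is the key step: one must ensure that all three cut-offs built into \eqref{5.comp} --- the Fourier truncation $W$, the cut-off $\varphi$ inside $T_N$, and the cut-off $\theta$ --- are simultaneously switched off on $\Cal B$, so that the truncated and the original dynamics literally coincide on the absorbing set and Definition~\ref{Def5.IMg} applies verbatim.
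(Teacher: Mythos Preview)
Your proof is correct and follows exactly the paper's approach: the paper's own argument is the one-liner that this is an immediate corollary of Theorem~\ref{Th5.main-a} together with the fact that, by construction, $F(u)=f(u)$ for all $u\in\Cal B$. You have simply written out in full the verification that all three cut-offs in \eqref{5.comp} are inactive on $\Cal B$ and that the conjugacy in Definition~\ref{Def5.IMg} holds, which is precisely what the paper leaves implicit.
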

Indeed, this is an immediate corollary of Theorem \ref{Th5.main-a} and the fact that, by the construction,
 $F(u)=f(u)$ for all $u\in\Cal B$.
\par
To conclude this section, we discuss the $C^{1+\eb}$-smoothness of the obtained IMs.

\begin{lemma}\label{Lem5.smooth} Let the nonlinearity $f$ satisfy Assumptions I-II and let $F$ be
 constructed via \eqref{5.comp}. Then,
 \begin{equation}\label{5.F-dif}
\|F(u_1)-F(u_2)-F'(u_1)(u_1-u_2)\|_{H}\le C\|u_1-u_2\|^\eb_{H^s}\|u_1-u_2\|_H, \ u_1,u_2\in H^s,
 \end{equation}
 where the constant $C$ may depend on $N$ but is independent of $u_1,u_2$. Moreover, if $\kappa>0$
  is such that $s_0\le2-\kappa$ and $u\in H^{s}$, $s\ge2-\kappa$ satisfies
\begin{equation}\label{5.forget}
\|Q_N u\|_{H^{2-\kappa}}\le \bar C
\end{equation}
for some positive constant $\bar C$, then
 the following estimate holds:
 \begin{equation}\label{5.ex-sm}
\|F'(u)v\|_{H^{s_0}}\le C\|v\|_{H^{s_0}},
 \end{equation}
 where the constant $C$ is independent of $u\in H^{s}$, but depends on $\bar C$.
\end{lemma}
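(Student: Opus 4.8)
The plan is to prove both inequalities by decomposing $F$ according to its explicit form \eqref{5.comp}, $F(u)=f(W(u))-a(W(u))W(u)+\theta(\|u\|_H^2)a(W(u))u+T_N(u)$, with the matching decomposition $F'(u)=l_1(u)+l_2(u)+l_3(u)+l_4(u)+T_N'(u)$ taken from \eqref{5.derder}, and to estimate the five pieces separately. I will use throughout that $W(H)$ is a fixed bounded subset of $H^{s_0}$ and that $W'(u)$ is uniformly bounded in $\Cal L(H^\kappa,H^\kappa)$ for every $\kappa$ (Proposition \ref{Prop5.W}), the two Lipschitz-type estimates $\|W(u_1)-W(u_2)\|_H\le C\|u_1-u_2\|_H$ and $\|W(u_1)-W(u_2)\|_{H^{s_0}}\le C\|u_1-u_2\|_{H^{s_0}}\le C\|u_1-u_2\|_{H^s}$ (the last step using $s_0\le s$), and, crucially, the \emph{uniform H\"older} variant $\|W'(u_1)-W'(u_2)\|_{\Cal L(H^\kappa,H^\kappa)}\le C\|u_1-u_2\|^\eb_{H^s}$, obtained by interpolating the global boundedness of $W'$ against the Lipschitz estimate of Proposition \ref{Prop5.W}(4); all of these come from boundedness of $\phi,\phi'$ and \eqref{5.sm-eat}. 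We also already know from the proofs of Theorems \ref{Th5.a0} and \ref{Th5.main-a} that $F$ and $F'$ are globally bounded in $H$.

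For \eqref{5.F-dif} I would first treat the summands that depend on $u$ only through $W(u)$: $f(W(u))$, $a(W(u))W(u)$ and the scalar factor $a(W(u))$ in the remaining two terms. Here the arguments $W(u_1),W(u_2)$ and the segment joining them stay in a fixed bounded subset of $H^{s_0}$, on which $f'$ and $a'$ are uniformly $C^\eb$ and $f,a,a'$ are bounded (Assumptions I and II). Writing the Taylor remainder of $f$, and of $v\mapsto a(v)v$, in the usual integral form $\int_0^1[g'(W(u_2)+\kappa(W(u_1)-W(u_2)))-g'(W(u_1))]\,d\kappa\,(W(u_1)-W(u_2))$ and inserting the $C^\eb$-bound produces $C\|W(u_1)-W(u_2)\|_{H^{s_0}}^\eb\|W(u_1)-W(u_2)\|_H$ plus, for $a(v)v$, a genuinely quadratic leftover term that is absorbed into the same mixed bound because $\|W(u_1)-W(u_2)\|_H$ stays bounded on $W(H)$; pulling back through $W$ via the estimates of the previous paragraph yields $C\|u_1-u_2\|_{H^s}^\eb\|u_1-u_2\|_H$ for this group.

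The two terms carrying an explicit \emph{unbounded} factor ($u$, resp. $AP_Nu$) require the ``flat at infinity'' structure of the cut-offs. Here I would note that $G_3(u):=\theta(\|u\|_H^2)a(W(u))u$ and $G_3'$ vanish identically for $\|u\|_H>2\bar R$, that $T_N(u)=-\tfrac12AP_Nu+R_N(u)$ with $R_N(u):=(\varphi(\|AP_Nu\|^2)+\tfrac12)AP_Nu$ compactly supported in $P_NH$, and that the purely linear part $-\tfrac12AP_Nu$ of $T_N$ contributes nothing to the second difference. Both $G_3'$ and $R_N'$ are then \emph{globally} bounded in $\Cal L(H,H)$ and \emph{globally} H\"older-$\eb$ with respect to $\|\cdot\|_{H^s}$: on the bounded convex supports $\{\|u\|_H\le2\bar R\}$, resp. $\{\|AP_Nu\|\le R_1\}$, all norms entering the estimate are bounded, and the $C^\eb$-regularity of $a'$, $a\circ W$, of $W'$ (its uniform H\"older variant) and the smoothness of $\theta,\varphi$ give the H\"older bound with a uniform constant; since $\theta,\varphi$ are flat at the outer boundary, the estimate extends past it by comparing with a boundary point on the segment. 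Feeding this into the integral form of the remainder gives $C_N\|u_1-u_2\|_{H^s}^\eb\|u_1-u_2\|_H$ for these pieces too. The principal nuisance of this first half is exactly this bookkeeping — producing the mixed ``$H^s$-H\"older times $H$-Lipschitz'' bound \emph{uniformly} over the unbounded set $u_1,u_2\in H^s$ — which is what forces the separation of the linear pieces and the use of the flat-at-infinity structure rather than mere local boundedness.

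For \eqref{5.ex-sm} I would run through $l_1,\dots,l_4,T_N'$ in \eqref{5.derder}. Since $W(u)$ lies in a bounded set of $H^{s_0}$, Assumption II(c) gives $\|f'(W(u))\|_{\Cal L(H^{s_0},H^{s_0})}\le C$, so together with $\|W'(u)\|_{\Cal L(H^{s_0},H^{s_0})}\le C$ (Proposition \ref{Prop5.W}) and $|a(W(u))|\le C$ the terms $l_1,l_2$ are bounded $H^{s_0}\to H^{s_0}$ by an $N$-independent constant; the rank-one term $l_3(u)v=-(a'(W(u)),W'(u)v)W(u)$ is bounded because its scalar factor is $O(\|v\|_H)=O(\|v\|_{H^{s_0}})$ (using $\|a'(W(u))\|_H\le C$, $\|W'(u)v\|_H\le C\|v\|_H$ and $s_0>0$) while $\|W(u)\|_{H^{s_0}}\le C$; and $T_N'(u)v$ is bounded in $H^{s_0}$ since it lives in the finite-dimensional $P_NH$ and $\|AP_Nu\|\le R_1$ wherever $\varphi'\ne0$. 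The only place the hypothesis of the lemma is used is $l_4(u)v$, a rank-one operator pointing along $u$ itself: I would split $u=P_Nu+Q_Nu$, bound $\|Q_Nu\|_{H^{s_0}}\le C\|Q_Nu\|_{H^{2-\kappa}}\le C\bar C$ using precisely the assumption $s_0\le2-\kappa$ and \eqref{5.forget}, and bound $\|P_Nu\|_{H^{s_0}}\le\lambda_N^{s_0/2}\|P_Nu\|_H\le 2\bar R\,\lambda_N^{s_0/2}$ using that the scalar coefficient of $l_4$ vanishes unless $\theta$ or $\theta'$ is nonzero, i.e. unless $\|u\|_H\le2\bar R$. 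Collecting these bounds gives \eqref{5.ex-sm} with a constant depending on $N$ and $\bar C$. The expected obstacle here is conceptual rather than computational: $F'$ is bounded on $H^{s_0}$ only after $\|Q_Nu\|_{H^{2-\kappa}}$ is controlled, which is exactly why, as noted in Remark \ref{Rem02.sm}, the sharper smoothing property that this lemma feeds into is available only on the inertial manifold.
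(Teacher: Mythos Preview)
Your proposal is correct and follows essentially the same approach as the paper: both reduce \eqref{5.F-dif} to showing $F'\in C^\eb(H^s,\Cal L(H,H))$ term by term (using the same interpolation trick for $W'$), and both handle \eqref{5.ex-sm} by noting that only $l_4$ is problematic and splitting $u=P_Nu+Q_Nu$ with the support of $\theta$ controlling the first piece and \eqref{5.forget} the second. The only organizational difference is that for the $l_4$-type term in \eqref{5.F-dif} the paper isolates the auxiliary maps $\Psi_1(u)=\theta(\|u\|^2_H)u$ and $\Psi_2(u)=2\theta'(\|u\|^2_H)u(u,\cdot)$ and shows directly that they have globally bounded Gateaux derivatives (hence are globally H\"older), whereas you argue via the compact support and flatness of $\theta,\varphi$ together with a segment-boundary comparison; these are the same mechanism packaged slightly differently.
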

\begin{proof} Let us first check estimate \eqref{5.F-dif}. Analogously to \eqref{5.int-inc},
it is sufficient to verify that $F'\in C^\eb(H^s,\Cal L(H,H))$. Let us verify this property for every term in
\eqref{5.derder} separately. For the first term, due to Assumption II and Proposition \ref{Prop5.W}, we have
\begin{multline}
\|f'(W(u_1))W'(u_1)v-f'(W(u_2))W'(u_2)v\|_H\le\\\le \|f'(W(u_1))-f'(W(u_2)\|_{\Cal L(H,H)}
\|W'(u_1)\|_{\Cal L(H,H)}\|v\|_H+\\+\|f'(W(u_2)\|_{\Cal L(H,H)}\|W'(u_1)-W'(u_2)\|_{\Cal L(H,H)}\|v\|_H\le\\\le
 C\|W(u_1)-W(u_2)\|_{H^{s_0}}^\eb\|v\|_H+C\|W'(u_1)-W'(u_2)\|_{\Cal L(H,H)}\|v\|_H.
\end{multline}
Using the Proposition \ref{Prop5.W} again, we infer
\begin{multline}\label{5.trick}
\|W'(u_1)-W'(u_2)\|_{\Cal L(H,H)}= \|W'(u_1)-W'(u_2)\|_{\Cal L(H,H)}^{\eb}
\|W'(u_1)-W'(u_2)\|_{\Cal L(H,H)}^{1-\eb}\le\\\le C\|u_1-u_2\|_{H^s}^\eb(\|W'(u_1)\|_{\Cal L(H,H)}+
\|W'(u_2)\|_{\Cal L(H,H)})^{1-\eb}\le C\|u_1-u_2\|^\eb_{H^s}.
\end{multline}
Thus, since $s_0<s$, we have checked that
$$
\|f'(W(u_1))W'(u_1)-f'(W(u_2))W'(u_2)\|_{\Cal L(H,H)}\le C\|u_1-u_2\|_{H^{s_0}}^\eb+C\|u_1-u_2\|_{H^s}^\eb
\le C\|u_1-u_2\|_{H^s}^\eb,
$$
where the constant $C$ is independent of $u_1,u_2\in H^s$.
\par
The H\"older continuity of the terms $a(W(u))W'(u)$ and $\theta(\|u\|^2_H)a(W(u))$ can be
established analogously using Assumption II, and the term $T'_N(u)$ is also straightforward
 since it is finite-dimensional. So, it only remains to estimate $l_4(u)$. To estimate these terms,
 we actually only need to verify the uniform H\"older continuity of maps $\Psi_1:u\to\theta(\|u\|^2_H)u$ and
 $\Psi_2:u\to 2\theta'(\|u\|_{H}^2)u(u,\cdot)$   as maps from $H$ to $H$ and $\Cal L(H,H)$ and
 respectively. Let us start with the first map.
\par
Since the function $\theta$ is smooth and has a finite support, the map $\Psi_1(u)$ is at least Gateaux
differentiable and its derivatives is given by
$$
\Psi'_1(u)v:=\theta(\|u\|^2_H)v+2\theta'(\|u\|_H^2)(u,v)u
$$
and, therefore,
$$
\|\Psi'_1(u)\|_{\Cal L(H,H)}\le \max_{z\in\R_+}\{|\theta(z)|\}+2\max_{z\in\R_+}\{z|\theta'(z)|\}\le C.
$$
Since $\|\Psi_1(u)\|_{H}\le C$, we end up with
$$
\|\Psi_1(u_1)-\Psi_1(u_2)\|_{H}\le C\|u_1-u_2\|^\eb_{H}\le C\|u_1-u_2\|_{H^s}^\eb,
$$
where the constant $C$ is independent of $u_1$ and $u_2$. Let us now look at the second map $\Psi_2$. Analogously,
its Gateaux derivative reads
$$
\Psi_2'(u)[w,v]=2\theta'(\|u\|^2_H)u(w,v)+2\theta'(\|u\|^2_H)w(u,v)+4\theta''(\|u\|^2_H)(w,u)(u,v)u,
$$
and using the fact that $\theta$ has a finite support, we get
$$
\|\Psi_2'(u)[w,v]\|_H\le C\|w\|_H\|v\|_H,
$$
where $C$ is independent of $u,v,w\in H$. Since $\Psi_2$ is bounded as a map from $H$ to $\Cal L(H,H)$, we
 infer from here that
 $$
\|\Psi_2(u_1)-\Psi_2(u_2)\|_{\Cal L(H,H)}\le C\|u_1-u_2\|_{H^s}^\eb.
 $$
 The obtained estimates, together with Assumption II and
 Proposition \ref{Prop5.W}, give
$$
\|l_4(u_1)-l_4(u_2)\|_{\Cal L(H,H)}\le C\|u_1-u_2\|_{H^s}^\eb
$$
and finish the proof of estimate \eqref{5.F-dif}.
\par
Let us verify estimate \eqref{5.ex-sm}. This estimate is an almost immediate corollary of Assumption II c)
 and Proposition \ref{Prop5.W}. The only problem which appears here is related with the term $l_4(u)$.
 Indeed, arguing as before, we get
$$
\|l_4(u)\|_{\Cal L(H^{s_0},H^{s_0})}\le C\(\theta(\|u\|^2_H)\|u\|_H+C|\theta'(\|u\|^2_H)|\)\|u\|_{H^{s_0}},
$$
where $C$ is independent of $u$.   To handle this term, we write
 $$
 \|u\|_{H^{s_0}}\le \|P_Nu\|_{H^{s_0}}+\|Q_Nu\|_{H^{s_0}}\le C_N\|u\|_{H}+\|Q_Nu\|_{H^{s_0}}.
 $$
 Therefore, since $\theta$ has a finite support, we end up with
 $$
 \|l_4(u)\|_{\Cal L(H^{s_0},H^{s_0})}\le C_N(1+\|Q_Nu\|_{H^{s_0}}).
 $$
Since $s_0\le 2-\kappa$, the right-hand side is bounded due to condition \eqref{5.forget}. This
finishes the proof of estimate \eqref{5.ex-sm} as well as the lemma.
\end{proof}

\begin{corollary}\label{Cor5.IMsm} Let the assumptions of Theorem \ref{Th5.main-a} and Lemma \ref{Lem5.smooth} hold and
let, in addition $s_0<2$ and $s_0<s<s_0+2$. Then  there exists an infinite sequence of $N$s
such that problem  \eqref{5.maineq} possesses an $N$-dimensional IM which is $C^{1+\eb_N}$-smooth
for some $\eb_N>0$.
\end{corollary}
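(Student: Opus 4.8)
The plan is to combine the Lipschitz IM already produced by Theorem~\ref{Th5.main-a} with the abstract smoothness criterion of Theorem~\ref{Th02.IMsm}; the only genuinely new work is to verify the second-order difference estimate \eqref{02.fdif} for the truncated nonlinearity $F$ in a function space in which the parabolic smoothing is strong enough \emph{on the manifold}. First I would fix, using $s_0<2$ and $s<s_0+2$, an exponent $\kappa>0$ so small that simultaneously $s_0\le 2-\kappa$ and $s\le 2+s_0-\kappa$. By Theorem~\ref{Th5.main-a}, for an infinite sequence of $N$'s the equation \eqref{1.FCH} with $F$ given by \eqref{5.comp} satisfies the strong cone property, hence by Theorem~\ref{Th02.IM-cone} possesses a Lipschitz IM $\Cal M_N$ over $H_{N,+}$, the graph of a globally Lipschitz map $\Bbb M$; by Corollary~\ref{Cor5.IM} this is an IM for \eqref{5.maineq} in the sense of Definition~\ref{Def5.IMg} with the trivially smooth $E=\operatorname{Id}$, so it remains only to prove $\Bbb M\in C^{1+\eb_N}$ for some $\eb_N>0$.

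The heart of the argument is a short bootstrap that upgrades the smoothing property on $\Cal M_N$. All trajectories relevant for the construction of $\Cal M_N$ satisfy \eqref{5.QIM}, i.e. $\|Q_Nu(t)\|_{H^{2-\kappa}}\le C_2$ uniformly in $t\ge0$; this is precisely the regime in which, by \eqref{5.q-est} and Lemma~\ref{Lem5.Q}, the cone property needs to be --- and has been --- checked. Since $s_0\le 2-\kappa$, estimate \eqref{5.ex-sm} of Lemma~\ref{Lem5.smooth} then gives $\|F'(u)\|_{\Cal L(H^{s_0},H^{s_0})}\le C$ along these trajectories, i.e. the additional hypothesis \eqref{01.esm} holds in exactly the regime that matters. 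By the argument indicated in Remark~\ref{Rem01.e-sm}, the smoothing estimate \eqref{01.dsm} for differences of two such solutions now improves to the $H^{2+s_0-\beta}$-norm for every $\beta>0$; with our choice of $\kappa$ this controls, in the $H^s$-norm, the difference of any two trajectories lying on $\Cal M_N$.

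With the $H^s$-smoothing in hand I would invoke Theorem~\ref{Th02.IMsm} in the form explained in Remark~\ref{Rem02.sm}: since the smoothing property holds in $H^s$ with $s<s_0+2$, the space $H^{2-\kappa}$ in \eqref{02.fdif} may be replaced by $H^s$, and \eqref{02.fdif} only needs to be verified for $u_1,u_2\in\Cal M_N$ --- which is where the bound on $\|Q_Nu\|_{H^{2-\kappa}}$ is available. But estimate \eqref{5.F-dif} of Lemma~\ref{Lem5.smooth} is exactly \eqref{02.fdif} with $H^{2-\kappa}$ replaced by $H^s$ and $\delta=\eb$ (and one may freely shrink $\eb$). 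Theorem~\ref{Th02.IMsm} therefore yields $\Bbb M\in C^{1+\eb_N}$; the exponent $\eb_N$ is the minimum of $\eb$ and a quantity fixed by the cone exponents for this $N$, and the $N$-dependence is harmless because $N$ has been frozen before the smoothness argument begins (it enters through the $N$-dependent constant in \eqref{5.F-dif}, coming from the $T_N$-term in \eqref{5.comp}).

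The main obstacle, and the reason for the bracketing hypotheses $s_0<2$ and $s<s_0+2$, is the clash of function spaces: Theorem~\ref{Th02.IMsm} demands a Taylor-type estimate on $F$ in a space where the smoothing property holds, yet the unconditional smoothing \eqref{01.dsm} stops just below $H^2$ while $s$ may be nearly $s_0+2$. The resolution is irreducibly two-stage --- one must first use the a priori bound on $Q_Nu$ in $H^{2-\kappa}$ on the manifold (Lemma~\ref{Lem5.Q}), then convert it via \eqref{5.ex-sm} into the operator bound \eqref{01.esm}, and only then does the stronger $H^s$-smoothing, and with it the applicability of \eqref{5.F-dif} through Theorem~\ref{Th02.IMsm}, become available. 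The remaining work (keeping track of which constants may depend on $N$, and shrinking $\eb$ and $\kappa$ to the required smallness) is bookkeeping rather than a real difficulty.
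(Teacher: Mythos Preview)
Your proposal is correct and follows exactly the route the paper intends: the paper's own proof is the single sentence ``this result follows from Lemma~\ref{Lem5.smooth}, Theorem~\ref{Th02.IMsm} and Remark~\ref{Rem02.sm}'', and you have faithfully unpacked how these three ingredients combine --- in particular, why the extra hypotheses $s_0<2$ and $s<s_0+2$ are precisely what allows one to choose $\kappa>0$ with $s_0\le 2-\kappa$ and $s\le 2+s_0-\kappa$, so that \eqref{5.ex-sm} feeds into Remark~\ref{Rem01.e-sm} and Remark~\ref{Rem02.sm} to make \eqref{5.F-dif} the required instance of \eqref{02.fdif}.
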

Indeed, this result follows from Lemma \ref{Lem5.smooth}, Theorem \ref{Th02.IMsm} and Remark \ref{Rem02.sm}.

\section{Spatial averaging in the case of periodic boundary conditions}\label{s5}

In this section, we discuss the spatial averaging Assumption III in the most usual
(from the point of view of applications) case where $A$ is the Laplacian $A=-\Dx$ in the 3D
 domain $\Omega=(-\pi,\pi)^3$ endowed with periodic boundary conditions. In this case the eigenvalues
  $\lambda_n$ of $A$ are naturally parameterised by triples $\vec n:=(q,l,m)$ of integer numbers:
  \begin{equation}
\lambda_{\vec n}=q^2+l^2+m^2,\ \ \ e_{\vec n}:=e^{ix.\vec n}=e^{ix_1q+ix_2l+ix_3m}.
  \end{equation}
Then, the Fourier series \eqref{01.F} become the classical Fourier expansions. We will use the notation
$\{\lambda_{\vec n}\}_{\vec n\in\Bbb Z^3}$ for this parametrization keeping the notation
 $\{\lambda_n\}_{n\in\Bbb N}$ for the  parametrization in the non-decreasing order used in previous sections.
\par
 Note also that all
 $\lambda_n$ are integer, so the distance from non-identical eigenvalues is at least one and due to
 the Gauss theorem
 about sums of squares there are no spectral gaps of size more than 3, see \cite{M-PS88,Z14}
  for more details. Thus,
 in general, the spectral gap conditions are {\it not satisfied} in all of examples considered below.
 \par
 We also recall that due to the Weyl asymptotic $\lambda_n\sim Cn^{2/3}$, so
 the key condition \eqref{5.sm-eat}
 is satisfied if and only if
\begin{equation}
 s>s_0+\frac32.
\end{equation}
There is also a small problem here related with possible zero eigenvalue which corresponds
 to $\vec n=(0,0,0)$. This can be overcome in two alternative ways. First, we may consider $A=1-\Dx$ instead
  of $A=-\Dx$ which removes the problem up to the nonessential shift of the spectrum. This is typically done, say,
  for reaction-diffusion equations. Alternatively, we may work in spaces with zero mean which is natural for
  Navier-Stokes or Cahn-Hilliard type problems. In this case the problem does not arise at all.
\par
The spatial averaging method takes its origin in the following number theoretic result which claims that the sums of
 3 squares of integers are distributed irregularly enough.

\begin{proposition}\label{Lem6.sp} Let
\begin{equation}
\Cal C^k_N:=\{\vec l\in\Bbb Z^3\,:\, N-k\le|\vec l|^2\le N+k\},\ \ \Cal B_r:=\{\vec l\in\Bbb Z^3\,:\,
|\vec l|\le r\}.
\end{equation}
Then, for every $k>0$ and $r>0$, there exist infinitely many $N\in\Bbb N$ such that
\begin{equation}\label{6.spav}
\(\Cal C^k_{N}-\Cal C^k_N\)\cap \Cal B_r=\{0\}.
\end{equation}
\end{proposition}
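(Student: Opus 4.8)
The plan is to reduce the statement to a counting question about integer points lying on a fixed finite family of affine planes, and then to invoke the classical fact that a positive definite integral binary quadratic form represents only a set of integers of density zero.

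First I would encode the obstruction algebraically. Suppose $\vec l_1,\vec l_2\in\Cal C^k_N$ with $\vec m:=\vec l_1-\vec l_2\in\Cal B_r$ and $\vec m\ne0$. From the identity $|\vec l_1|^2-|\vec l_2|^2=(\vec l_1-\vec l_2)\cdot(\vec l_1+\vec l_2)=2\vec m\cdot\vec l_2+|\vec m|^2$ together with $\big||\vec l_1|^2-|\vec l_2|^2\big|\le 2k$ (both squared norms lie in $[N-k,N+k]$), one gets $|2\vec m\cdot\vec l_2|\le 2k+|\vec m|^2\le 2k+r^2$. Hence $\vec l_2$ lies on one of the affine lattice planes
\[
P_{\vec m,c}:=\{\vec l\in\Bbb Z^3:\ \vec m\cdot\vec l=c\},\qquad
0\ne\vec m\in\Bbb Z^3,\ |\vec m|\le r,\ c\in\Bbb Z,\ |c|\le k+\tfrac{r^2}2,
\]
and there are only finitely many such planes, their number bounded in terms of $k$ and $r$ only. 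Consequently, if $N$ is chosen so that $\Cal C^k_N\ne\varnothing$ and $\Cal C^k_N$ is disjoint from the (finite) union of all these planes, then the only difference $\vec l_1-\vec l_2$ with $\vec l_1,\vec l_2\in\Cal C^k_N$ that lands in $\Cal B_r$ is $0$, which is exactly \eqref{6.spav}.

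Next I would show that, for each individual plane $P=P_{\vec m,c}$ in the list, the set of \emph{forbidden} $N$, namely those $N$ for which $[N-k,N+k]$ meets the value set $V_P:=\{|\vec l|^2:\ \vec l\in P\cap\Bbb Z^3\}$, has density zero in $\Bbb N$. Writing $P\cap\Bbb Z^3=\{\vec l_0+x\vec a+y\vec b:\ (x,y)\in\Bbb Z^2\}$ for a basis $\vec a,\vec b$ of the direction lattice, the function $\vec l\mapsto|\vec l|^2$ becomes the inhomogeneous quadratic polynomial $Q_P(x,y)=|\vec l_0|^2+2\vec l_0\cdot(x\vec a+y\vec b)+|x\vec a+y\vec b|^2$, whose degree–two part $|x\vec a+y\vec b|^2$ is a positive definite integral binary form $q(x,y)$. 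Completing the square over $\Bbb Q$ and clearing denominators gives $D^2\big(Q_P(x,y)-c_0\big)\in q(\Bbb Z^2)$ for a fixed $D\in\Bbb Z_{>0}$ and $c_0\in\Bbb Q$, so $V_P$ is contained in a fixed affine image of $q(\Bbb Z^2)$. By the classical theorem of Landau (for $x^2+y^2$) and Bernays (for arbitrary positive definite binary forms), $q(\Bbb Z^2)$ has density zero; hence so does $V_P$, and hence so does $\{N:\ [N-k,N+k]\cap V_P\ne\varnothing\}$ since inflating a zero-density set by the fixed amount $k$ preserves zero density. Taking the union over the finitely many planes, the set of forbidden $N$ still has density zero, so the admissible $N$ form a set of density one, in particular an infinite set. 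Finally one arranges $\Cal C^k_N\ne\varnothing$ as well: this excludes at most the integers of the form $4^a(8b+7)$ (by Gauss's three-square theorem), a set of density $\le\tfrac16$, so infinitely many admissible $N$ remain.

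The main obstacle is the number-theoretic input in the third step: the zero-density statement for values of positive definite binary quadratic forms is not elementary — it is essentially the Landau--Ramanujan phenomenon — and it cannot be circumvented by a crude lattice-point count. Indeed, each plane $P$ carries $\asymp T$ integer points of squared norm $\le T$, so counting these points only bounds the number of forbidden $N$ in $[T,2T]$ by $O(T)$, not $o(T)$; it is precisely the non-elementary fact that these $\asymp T$ points assume merely $o(T)$ distinct norms that yields a zero-density set of forbidden $N$. The secondary technical point is the passage from the affine, inhomogeneous polynomial $Q_P$ to a genuine integral binary quadratic form, which is handled by completing the square over $\Bbb Q$ and tracking denominators as indicated above.
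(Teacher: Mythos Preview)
Your argument is correct. The paper itself does not prove this proposition at all; it simply cites Mallet-Paret and Sell \cite{M-PS88}. Your proof --- reduce the failure of \eqref{6.spav} to the event that $\Cal C^k_N$ meets one of finitely many affine lattice planes $P_{\vec m,c}$, parametrize each plane to obtain an inhomogeneous positive definite binary quadratic polynomial, complete the square to embed its value set into $q(\Bbb Z^2)$ for an integral positive definite binary form $q$, and invoke the Landau--Bernays density-zero theorem --- is precisely the argument of \cite{M-PS88} (see also the exposition in \cite{Z14}). The only minor addition is your final remark ensuring $\Cal C^k_N\ne\varnothing$ via the three-square theorem, which is harmless and correctly handled by the density bookkeeping.
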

The proof of this proposition is given in \cite{M-PS88}.
\par
With a slight abuse of notations, we redefine the projector $\Cal I_{k,N}$ as follows:
\begin{equation}
\Cal I_{k,N}v:=\sum_{\vec n\in \Cal C^k_N}(v,e_{\vec n})e_{\vec n}.
\end{equation}
Being pedantic, we should write $\Cal I_{k,N'}$ in the left-hand side of this formula, where $N'=N'(N)$
is defined by
$$
N':=\max\{M\in\Bbb N, \lambda_M\le N\}.
$$
However, the difference between $N$ and $N'$ is not essential for us and to simplify the notations, we
 ignore it. The next proposition is also crucial for the spatial averaging machinery.
\par
\begin{proposition}\label{Prop6.mult} Let $\Cal N_{\psi}:L^2(\Omega)\to L^2(\Omega)$ be the operator of
point-wise multiplication on a function $\psi\in H^{s_0}$ for some $s_0>\frac32$:
\begin{equation}\label{6.mult}
(\Cal N_\psi v)(x):=\psi(x)v(x).
\end{equation}
Then, the operator $\Cal N_{\psi}$ satisfies the spatial averaging property in the following sense: for every $k>0$ and
every $\delta>0$ there exists an infinitely many $N\in\Bbb N$ such that
\begin{equation}\label{6.sp}
\|\Cal I_{k,N}\Cal N_\psi\Cal I_{k,N}v-a\Cal I_{k,N}v\|_{L^2}\le\delta \|v\|_{L^2}, \ v\in L^2,
\end{equation}
where $a=\<\psi\>:=\frac1{2\pi^3}\int_{(-\pi,\pi)^3}\psi(x)\,dx$.
\end{proposition}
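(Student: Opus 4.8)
The plan is to reduce the statement to a Fourier-shift computation and then invoke the number-theoretic Proposition~\ref{Lem6.sp}. First I would split $\psi$ into its mean and its oscillating part, $\psi=a+\tilde\psi$ with $a=\<\psi\>$ and $\<\tilde\psi\>=0$. Since $\Cal I_{k,N}$ is an orthogonal projector, $\Cal I_{k,N}^2=\Cal I_{k,N}$, hence $\Cal I_{k,N}\Cal N_\psi\Cal I_{k,N}-a\Cal I_{k,N}=\Cal I_{k,N}\Cal N_{\tilde\psi}\Cal I_{k,N}$, and the assertion becomes $\|\Cal I_{k,N}\Cal N_{\tilde\psi}\Cal I_{k,N}\|_{\Cal L(L^2,L^2)}\le\delta$ for infinitely many $N$. (One works throughout with complex-valued functions; the restriction to the real space $H$ is harmless.) Note also that $H^{s_0}\hookrightarrow L^\infty$ for $s_0>\frac32$, so $\Cal N_{\tilde\psi}$ is a bounded operator on $L^2$ from the outset.

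Next I would decompose $\tilde\psi$ by frequency: $\tilde\psi=\psi_{\le r}+\psi_{>r}$, where $\psi_{\le r}:=\sum_{0<|\vec l|\le r}\hat\psi_{\vec l}\,e_{\vec l}$ collects the finitely many nonzero low modes and $\psi_{>r}$ is the remainder. For the high-frequency part, Cauchy--Schwarz gives $\|\psi_{>r}\|_{L^\infty}\le\sum_{|\vec l|>r}|\hat\psi_{\vec l}|\le\|\psi\|_{H^{s_0}}\bigl(\sum_{|\vec l|>r}|\vec l|^{-2s_0}\bigr)^{1/2}$, and since $s_0>\frac32$ the series over the nonzero $\vec l\in\Bbb Z^3$ converges (integral comparison in $\R^3$, equivalently the Weyl asymptotics), so its tail tends to $0$ as $r\to\infty$. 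Hence I may fix $r=r(\delta)$ so large that $\|\psi_{>r}\|_{L^\infty}\le\delta$; then $\|\Cal I_{k,N}\Cal N_{\psi_{>r}}\Cal I_{k,N}\|_{\Cal L(L^2,L^2)}\le\|\psi_{>r}\|_{L^\infty}\le\delta$ for every $N$.

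It then remains to kill the finite low-frequency sum. Multiplication by $e_{\vec l}$ acts on Fourier coefficients as the shift $\vec n\mapsto\vec n+\vec l$, so $\Cal I_{k,N}\Cal N_{e_{\vec l}}\Cal I_{k,N}$ can be nonzero only if there exists $\vec n$ with $\vec n\in\Cal C^k_N$ and $\vec n-\vec l\in\Cal C^k_N$, that is, only if $\vec l\in\Cal C^k_N-\Cal C^k_N$. Now apply Proposition~\ref{Lem6.sp} with the given $k$ and this $r$: there are infinitely many $N$ with $(\Cal C^k_N-\Cal C^k_N)\cap\Cal B_r=\{0\}$. For each such $N$, every $\vec l$ occurring in $\psi_{\le r}$ satisfies $0<|\vec l|\le r$, hence $\vec l\notin\Cal C^k_N-\Cal C^k_N$, so $\Cal I_{k,N}\Cal N_{e_{\vec l}}\Cal I_{k,N}=0$ and therefore $\Cal I_{k,N}\Cal N_{\psi_{\le r}}\Cal I_{k,N}=0$. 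Combining the two contributions, for these infinitely many $N$ we obtain $\|\Cal I_{k,N}\Cal N_{\tilde\psi}\Cal I_{k,N}\|=\|\Cal I_{k,N}\Cal N_{\psi_{>r}}\Cal I_{k,N}\|\le\delta$, which is \eqref{6.sp}.

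The argument has essentially no hard part: the content is carried entirely by the number-theoretic Proposition~\ref{Lem6.sp}, which produces the resonance-free shells $\Cal C^k_N$. The only genuinely analytic input is the uniform $L^\infty$-smallness of the high-frequency tail $\psi_{>r}$, which is precisely where the hypothesis $s_0>\frac32$ enters (and, in the applications to \eqref{5.maineq}, where the requirement \eqref{5.sm-eat} on $s$ comes in).
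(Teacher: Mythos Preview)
Your proof is correct and follows essentially the same route as the paper: split off the mean, use Proposition~\ref{Lem6.sp} to annihilate the low-frequency part $\psi_{\le r}$ on the intermediate shell, and control the tail $\psi_{>r}$ in $L^\infty$. The only minor difference is that you estimate $\|\psi_{>r}\|_{L^\infty}$ directly via Cauchy--Schwarz on the Fourier coefficients (using $\sum_{|\vec l|>r}|\vec l|^{-2s_0}<\infty$), whereas the paper goes through the interpolation inequality $\|\psi_{>r}\|_{L^\infty}\le C\|\psi_{>r}\|_{L^2}^\kappa\|\psi_{>r}\|_{H^{s_0}}^{1-\kappa}$; both are standard and equivalent here.
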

\begin{proof} Although this result is well-known, its proof is crucial for understanding the
spatial averaging technique, so we give some details below following mainly \cite{Z14}.
\par
The multiplication $\psi(x)v(x)$ is a convolution in Fourier modes, so the corresponding
 Fourier coefficients $[\psi v]_{\vec m}$, $\vec m\in\Bbb Z^3$ satisfy
\begin{equation}
[\psi v]_{\vec m}=\sum_{\vec l\in\Bbb Z^3} \psi_{\vec m-\vec l}\, v_{\vec l}
\end{equation}
and, due to condition \eqref{6.spav},
\begin{equation}
\Cal I_{k,N}\((\psi-\<\psi\>) \Cal I_{k,N}v\)=\Cal I_{k,N}\(\psi_{>r}\Cal I_{k,N}v\),
\end{equation}
where $\psi_{>r}(x):=\sum_{\vec l\notin \Cal B_r}\psi_{\vec l} e^{i\vec l.x}$. Therefore,
\begin{equation}
\|\Cal I_{k,N}\((\psi-\<\psi\>) \Cal I_{k,N}v\)\|_H\le
\|\(\psi_{>r}\Cal I_{k,N}v\)\|_H\le \|\psi_{>r}\|_{L^\infty}\|v\|_H.
\end{equation}
Thus, we only need to check that
\begin{equation}
\lim_{r\to\infty}\|\psi_{>r}\|_{L^\infty}=0.
\end{equation}
To verify this property, we use the interpolation inequality
$$
\|\psi_{>r}\|_{L^\infty}\le C\|\psi_{>r}\|_{L^2}^{\kappa}\|\psi_{>r}\|_{H^{s_0}}^{1-\kappa}
$$
for the properly chosen $\kappa=\kappa(\alpha)\in(0,1)$ (here we have used that $s_0>\frac32$),
together with the standard inequality $\|\psi_{>r}\|_{H^{s_0}}\le C\|\psi\|_{H^{s_0}}$. Thus, we have
$$
\|\psi_{>r}\|_{L^\infty}\le C\|\psi_{>r}\|_{L^2}^\kappa\|\psi\|_{H^{s_0}}^{1-\kappa}\le
C r^{-\kappa s_0}\|\psi_{>r}\|_{H^{s_0}}^\kappa\|\psi\|_{H^{s_0}}^{1-\kappa}\le
 Cr^{-\kappa s_0}\|\psi\|_{H^{s_0}}
$$
and the proposition is proved.
\end{proof}
At the next step, we consider the particular case where the function $\psi$ has zero mean. Then the
 class of operators with spatial averaging property can be essentially extended.

\begin{proposition}\label{Prop6.0-mean} Let $A_1,A_2$ be two linear operators which commute with the
 operator $A:=-\Dx$ with periodic boundary conditions (more precisely, we need
 the commutation of them with spectral projectors $\Cal I_{k,N}$)
  and let
 \begin{equation}
A_1\in\Cal L(H^{-\beta},H),\ \ A_2\in \Cal L(H,H^{-\beta}),\
 \end{equation}
 for some $\beta\in[0,1]$. Assume also that $\psi\in H^{s_0}$ for some $s_0>\frac32$ and $\<\psi\>=0$.
 Then the operators
 $$
\Cal N_{A_1,\psi,A_2}:=A_1\circ\Cal N_\psi\circ A_2 \ \ \text{and}\ \ \Cal N_{A_1,A_2\psi}:=A_1\circ\Cal N_{A_2\psi}
 $$
 satisfy the spatial averaging property \eqref{6.sp} with $a=0$.
\end{proposition}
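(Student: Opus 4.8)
The plan is to reduce both statements to the zero-mean spatial averaging already established in Proposition~\ref{Prop6.mult}, using that $A_1$ and $A_2$ commute with the band projectors $\Cal I_{k,N}$ and that on the thin spectral band $\Cal C^k_N$, on which every eigenvalue lies in $[N-k,N+k]$, these operators cost at most a bounded factor. For $\Cal N_{A_1,\psi,A_2}=A_1\circ\Cal N_\psi\circ A_2$ the commutation gives the operator identity $\Cal I_{k,N}\Cal N_{A_1,\psi,A_2}\Cal I_{k,N}=A_1\,\Cal I_{k,N}\Cal N_\psi\Cal I_{k,N}\,A_2$. Given $v\in L^2$, I would set $w:=A_2\Cal I_{k,N}v=\Cal I_{k,N}A_2v$; this is supported on the band, and since $A_2\in\Cal L(H,H^{-\beta})$ one has $\|w\|_{L^2}\le(N+k)^{\beta/2}\|w\|_{H^{-\beta}}\le C(N+k)^{\beta/2}\|v\|_{L^2}$. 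For the values of $N$ furnished by Proposition~\ref{Lem6.sp} with a radius $r$ still to be chosen, the condition $(\Cal C^k_N-\Cal C^k_N)\cap\Cal B_r=\{0\}$ forces, exactly as in the proof of Proposition~\ref{Prop6.mult} and because $\langle\psi\rangle=0$, the identity $\Cal I_{k,N}\Cal N_\psi w=\Cal I_{k,N}(\psi_{>r}w)$, whence $\|\Cal I_{k,N}\Cal N_\psi w\|_{L^2}\le\|\psi_{>r}\|_{L^\infty}\|w\|_{L^2}$ (here $\psi\in H^{s_0}\subset L^\infty$). Applying $A_1\in\Cal L(H^{-\beta},H)$ to the band-supported function $\Cal I_{k,N}(\psi_{>r}w)$ and using $\|\cdot\|_{H^{-\beta}}\le(N-k)^{-\beta/2}\|\cdot\|_{L^2}$ on the band, I obtain
\begin{equation}
\|\Cal I_{k,N}\Cal N_{A_1,\psi,A_2}\Cal I_{k,N}v\|_{L^2}\le C\Big(\frac{N+k}{N-k}\Big)^{\beta/2}\|\psi_{>r}\|_{L^\infty}\,\|v\|_{L^2},
\end{equation}
with $C$ depending only on $A_1,A_2$ and $k$.

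Restricting to $N\ge3k$ (still infinitely many admissible values) bounds the ratio by a constant, so the left-hand side is $\le C'\|\psi_{>r}\|_{L^\infty}\|v\|_{L^2}$. Since $\psi\in H^{s_0}$ with $s_0>\tfrac32$, the interpolation estimate in the proof of Proposition~\ref{Prop6.mult} gives $\|\psi_{>r}\|_{L^\infty}\to0$ as $r\to\infty$; choosing $r$ so that $C'\|\psi_{>r}\|_{L^\infty}<\delta$ and then the corresponding infinite sequence of $N\ge3k$ from Proposition~\ref{Lem6.sp} yields \eqref{6.sp} with $a=0$ for $\Cal N_{A_1,\psi,A_2}$. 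For $\Cal N_{A_1,A_2\psi}=A_1\circ\Cal N_{A_2\psi}$ the same scheme applies with $A_2\psi$ playing the role of the multiplier. First, since $A_2$ commutes with $A=-\Dx$ it preserves each eigenspace of $A$, in particular the line of constants and hence the subspace of zero-mean functions, so $\langle A_2\psi\rangle=0$; the same block structure shows $A_2\in\Cal L(H^{s_0},H^{s_0-\beta})$, hence $A_2\psi\in H^{s_0-\beta}$. Commutation gives $\Cal I_{k,N}\Cal N_{A_1,A_2\psi}\Cal I_{k,N}=A_1\Cal I_{k,N}\Cal N_{A_2\psi}\Cal I_{k,N}$; the zero-mean trick gives $\Cal I_{k,N}\Cal N_{A_2\psi}\Cal I_{k,N}v=\Cal I_{k,N}\big((A_2\psi)_{>r}\Cal I_{k,N}v\big)$; the Sobolev product bound $H^{s}\cdot L^2\hookrightarrow H^{-\beta}$, valid whenever $s+\beta>\tfrac32$, controls the $H^{-\beta}$-norm of $(A_2\psi)_{>r}\Cal I_{k,N}v$ by $C\|(A_2\psi)_{>r}\|_{H^s}\|v\|_{L^2}$; and since $A_1\in\Cal L(H^{-\beta},H)$ this gives $\|\Cal I_{k,N}\Cal N_{A_1,A_2\psi}\Cal I_{k,N}v\|_{L^2}\le C\|(A_2\psi)_{>r}\|_{H^s}\|v\|_{L^2}$. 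Picking $s\in(\tfrac32-\beta,\ s_0-\beta)$ (possible since $s_0>\tfrac32$) ensures $\|(A_2\psi)_{>r}\|_{H^s}\to0$ as $r\to\infty$, and one concludes exactly as in the first case.

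The only genuinely delicate point is the bookkeeping of powers of $N$ for the first operator: pushing $w$ through $A_2$ from $H^{-\beta}$ into $L^2$ costs a factor $(N+k)^{\beta/2}$, while pushing the result through $A_1$ from $L^2$ back into $H^{-\beta}$ gains $(N-k)^{-\beta/2}$, and since these two exponents coincide and $\beta\le1$ their product $\big((N+k)/(N-k)\big)^{\beta/2}$ is uniformly bounded (indeed tends to $1$). Hence nothing is lost there, and — as it must be, since no spectral gaps are available in this setting — all of the smallness comes from the zero-mean cancellation combined with $\|\psi_{>r}\|_{L^\infty}\to0$, that is, from the mechanism already isolated in Proposition~\ref{Prop6.mult}. (One could equally run the first operator with the same Sobolev product estimate used for the second, avoiding the band factors altogether.) The secondary point is that $A_2\psi$ may be strictly less regular than $\psi$, which is precisely why for $\Cal N_{A_1,A_2\psi}$ one multiplies within the Sobolev scale, landing in $H^{-\beta}$, rather than by an $L^\infty$ function; the assumption $s_0>\tfrac32$ leaves just enough room, namely $s_0-\beta>\tfrac32-\beta$, to choose the exponent $s$ above.
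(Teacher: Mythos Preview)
Your proof is correct. For the second operator $\Cal N_{A_1,A_2\psi}$ your argument coincides with the paper's: both place the multiplier $A_2\psi$ in an $H^s$ space with $s>\tfrac32-\beta$ and use the product bound $H^s\cdot L^2\hookrightarrow H^{-\beta}$ (the paper writes this out via explicit H\"older and Sobolev exponents rather than stating it abstractly). For the first operator $\Cal N_{A_1,\psi,A_2}$ your route is genuinely different. The paper shows directly, via the Kato--Ponce inequality, that multiplication by $\psi_{>r}$ is small as an operator $H^{-\beta}\to H^{-\beta}$; this then composes with $A_1\in\Cal L(H^{-\beta},H)$ and $A_2\in\Cal L(H,H^{-\beta})$ without any $N$-dependence at all. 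You instead exploit that everything lives on the band $\Cal C^k_N$ to convert between $L^2$ and $H^{-\beta}$ norms at the price of factors $(N\pm k)^{\pm\beta/2}$, observe that these cancel up to a bounded ratio, and then invoke only the $L^\infty$-smallness of $\psi_{>r}$ from Proposition~\ref{Prop6.mult}. Your approach is more elementary---it avoids Kato--Ponce entirely and makes the role of the spectral localization explicit---while the paper's approach is cleaner in that the key estimate is intrinsically $N$-free, at the cost of importing a commutator-type product inequality as a black box. Your parenthetical remark that one could also run the first operator with a Sobolev product estimate is essentially the paper's route in disguise.
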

\begin{proof} Let us start with the first operator.
Arguing as in the proof of Proposition \ref{Prop6.mult} and using that $\Cal I_{k,N}$ commute with $A_1$ and
 $A_2$, we see that it is sufficient to show that
$$
\lim_{r\to\infty}\|\Cal N_{A_1,\psi_{>r},A_2}\|_{\Cal L(H,H)}=0.
$$
In turn, to this end, we only need to show that
$$
\lim_{r\to\infty}\|\Cal N_{\psi_{>r}}\|_{\Cal L(H^{-\beta},H^{-\beta})}=0.
$$
To check this property, we will use the following version of the Kato-Ponce inequality,
see \cite{BCD11,KP88}:
$$
\|\psi w\|_{H^{\beta}}\le C\|\psi\|_{L^\infty}\|w\|_{H^{\beta}}+C\|\psi\|_{H^{\beta,q}}\|w\|_{L^p},
$$
where $\frac12=\frac1q+\frac1p$. We fix the exponents
$\frac1p=\frac12-\frac\beta3$, $\frac1q=\frac\beta3$ in
 order to have the Sobolev embeddings $H^\beta\subset L^p$
 and $H^{s'}\subset H^{\beta,q}$ for all $s'>\frac32$. This gives us the estimate
 \begin{equation}
\|\psi w\|_{H^\beta}\le C\|\psi\|_{H^{s'}}\|w\|_{H^\beta}
 \end{equation}
 and, therefore, taking $s'\in(3/2,s_0)$ and using the standard trick with adjoint operator, we have
$$
\|\Cal N_{\psi_{>r}}\|_{\Cal L(H^{-\beta},H^{-\beta})}\le C\|\psi_{>r}\|_{H^{s'}}\le
 C r^{s'-s_0}\|\psi\|_{H^{s_0}}
$$
which finishes the proof of the proposition for the operator $\Cal N_{A_1,\psi,A_2}$.
\par
Let us now study the second operator $\Cal N_{A_1,A_2\psi}$. Using again that $A_1$, $A_2$
commute with $\Cal I_{k,N}$ and arguing as in the proof of Proposition \ref{Prop6.mult}, we see that,
we only need to verify that
$$
\lim_{r\to\infty}\|\Cal N_{A_2(\psi_{>r})}\|_{\Cal L(H,H^{-\beta})}=0.
$$
To verify this, we use the Sobolev embedding $L^q\subset H^{-\beta}$ for $\frac1q=\frac12+\frac\beta3$,
$H^\mu\subset L^p$ for $\frac1p=\frac12-\frac\mu3$, where $\frac1q=\frac12+\frac1p$,
i.e., $\mu=\frac32-\beta$, together with H\"older's inequality. This gives
\begin{multline}
\|A_2(\psi_{>r})v\|_{H^{-\beta}}\le C\|A_2(\psi_{>r})v\|_{L^q}\le
C\|A_2\psi_{>r}\|_{L^p}\|v\|_H\le C\|A_2\psi_{>r}\|_{H^{3/2-\beta}}\|v\|_{L^2}\le\\\le
C\|\psi_{>r}\|_{H^{3/2}}\|v\|_H\le Cr^{3/2-s_0}\|\psi\|_{H^{s_0}}\|v\|_H
\end{multline}
and the proposition is proved.
\end{proof}
We conclude this section by verifying the spatial averaging property as well as other properties stated in
Assumptions I-II for a number of concrete nonlinearities related with our applications to  reaction-diffusion,
 Cahn-Hilliard and Navier-Stokes equations.

\begin{example}\label{Ex6.RDE} Let us consider the local scalar nonlinearity $f(u)$ for some $f\in C^4(\R,\R)$.
This will correspond to the case of
reaction-diffusion equation \eqref{5.maineq} with $\gamma=0$. In this case the derivative $f'(u)v$ is
 a multiplication operator on a function $\psi=f'(u)$. Thus, according to Proposition \ref{Prop6.mult},
 the spatial averaging assumption (Assumption III) will be satisfied with $a:=\<f'(u)\>$ if we take
 $\frac32<s_0<2$. Using the fact that $H^{s_0}$ is an algebra if $s_0>\frac32$, we see that the
 other regularity assumptions (Assumptions I-II) are also automatically satisfied if
 \begin{equation}\label{6.smoothness}
 \frac32<s_0<2,\ \ s_0+\frac32<s<s_0+2<4.
\end{equation}
 In this case, we take $A=1-\Dx$ in order to remove zero eigenvalue.
 \par
 Thus, to verify the existence of an IM of smoothness $C^{1+\eb}$ for this type of the nonlinearity
  (according to Corollary \ref{Cor5.IMsm}) it is enough to verify that the corresponding
   equation \eqref{5.maineq}
  possesses an absorbing ball in the space $H^s$ where $s$ satisfies
   \eqref{6.smoothness}. This will be discussed
  in the next section.
\end{example}
\begin{example}\label{Ex6.CH} Let us modify slightly the previous example to adapt it to
 the case of Cahn-Hilliard
type equations. Namely, we will consider the space $H=\{u\in L^2,\<u\>=0\}$ and consider the nonlinearity
\begin{equation}
f(u)-\<f(u)\>.
\end{equation}
The extra non-local term $\<f(u)\>$ has 1 dimensional range and does not affect the spectral
averaging property as well
 as other regularity properties of the nonlinearity. Thus, to get the existence
 of $C^{1+\eb}$-smooth IMs, we only need to get
  the absorbing set in $H^s$ satisfying \eqref{6.smoothness}.
\end{example}

\begin{example}\label{Ex6.NS} We now consider the Navier-Stokes type nonlinearities. We assume that
\begin{equation}\label{6.divfree}
H:=\{u\in [L^2(\Omega)]^3,\ \<u\>=0,\ \divv u=0\}
\end{equation}
and denote by $P$ the Leray orthoprojector from $[L^2(\Omega)]^3$ to $H$. It is well-known that in the
case of periodic boundary conditions, the Leray projector $P$ commutes with the Laplacian and, therefore,
the Stokes operator $A=-P\Dx$ is just a restriction of the Laplacian to the space $H$ of divergence-free
 vector fields. Thus, the results of this section on spatial averaging and, in particular, Proposition
  \ref{Prop6.0-mean} remain valid for the Stokes operator as well.
  \par
  Let us now consider a special class of modified Navier-Stokes nonlinearities. First, in the spirit of
  Leray-$\alpha$ model, we define
  $$
  \bar u:=(1-\alpha\Dx)^{-\bar\gamma}u
  $$
  for some $\bar\gamma\ge0$ and then we consider the nonlinearity
  \begin{equation}\label{6.NSnon}
f(u):=P(-\Dx)^{-\gamma}[(u\cdot\Nx)\bar u]=P(-\Dx)^{-\gamma}[(u\cdot\Nx)(1-\alpha\Dx)^{-\bar\gamma}u]
  \end{equation}
  for the corresponding $\bar\gamma,\gamma\ge0$. In order to have zero order nonlinearity, we need to add
  extra condition $\gamma+\bar\gamma\ge\frac12$. Since the situation where this inequality is strict can
  be only simpler, we will assume from now on that
\begin{equation}
\gamma+\bar\gamma=\frac12.
\end{equation}
As we will see below, the limit case $\bar\gamma=0$, $\gamma=\frac12$ corresponds to
 hyperviscous Navier-Stokes equation and another limit case $\gamma=0$, $\bar\gamma=\frac12$
 gives us the so-called Leray-$\alpha$-Bardina model.
\par
Note that the gradient  also commutes with the Leray projector and with the Laplacian, so
the derivative
\begin{multline}\label{6.N-der}
f'(u)v=P(-\Dx)^{-\gamma}[(u\cdot\Nx)(1-\alpha\Dx)^{-\bar\gamma}v]+\\+P(-\Dx)^{-\gamma}[(v\cdot\Nx)
(1-\alpha\Dx)^{-\bar\gamma}u]:=B(u,v)+B(v,u)
\end{multline}
can be written as a sum of operators considered in Proposition \ref{Prop6.0-mean} with $\beta=2\gamma$.
Moreover, the spatial
averaging of every such a term is equal to zero due to the assumption that $H$ consists of
 functions with zero mean. Thus, spatial averaging Assumption III is satisfied if $s_0>\frac32$.
\par
Let us verify the regularity assumptions (Assumptions I-II) for the Navier-Stokes nonlinearity $f$
given by \eqref{6.NSnon}. To this end, we recall that, we have actually proved in Proposition
 \ref{Prop6.0-mean} that
\begin{equation}\label{6.mainest}
\|f'(u)v\|_H\le C\|u\|_{H^{s_0}}\|v\|_H.
\end{equation}
 Moreover, since the map $u\to f'(u)$ is a {\it linear} continuous
  map from $H^{s_0}$ to $\Cal L(H,H)$, its H\"older continuity (as well as even
   $C^\infty$-smoothness) is also an immediate corollary of \eqref{6.mainest}. Thus, we have
    verified properties a) and b) of Assumption II. To verify Assumption I, it is enough to note that $f$ is a homogeneous quadratic form,
    so by Euler identity,
    $$
    f(u)=\frac12 f'(u)u
    $$
    and Assumption I also follows from \eqref{6.mainest}. Thus, we only need to verify property c)
    of Assumption II, namely, that $f'(u)$ is a bounded operator from $H^{s_0}$ to $H^{s_0}$. To this end,
    we will use again the Kato-Ponce formula together with the proper Sobolev embeddings. Namely,
\begin{multline}\label{6.KP1}
    \|B(u,v)\|_{H^{s_0}}\le C\|(u\cdot\Nx)(1-\alpha\Dx)^{-\bar\gamma}v\|_{H^{s_0-2\gamma}}\le\\\le
     C\|u\|_{L^\infty}\|v\|_{H^{s_0-2\gamma-2\bar\gamma+1}}+
     \|u\|_{H^{s_0-2\gamma,p}}\|v\|_{H^{1-2\bar\gamma,q}},
\end{multline}
where $\frac1p+\frac1q=\frac12$. The first term in the right-hand side of \eqref{6.KP1} is under
 control due to embedding $H^{s_0}\subset L^\infty$ for $s_0>\frac32$. To estimate the second
  one, we fix $\frac1p=\frac12-\frac{2\gamma}3$. Then, $H^{s_0}\subset H^{s_0-2\gamma,p}$ and
  $\frac1q=\frac{2\gamma}3$. Therefore, $s_0>\frac32$ implies that
  $$
  \frac1q>\frac12-\frac{s_0-(1-2\bar\gamma)}3
  $$
  and $H^{s_0}\subset H^{1-2\bar\gamma,q}$. Finally, \eqref{6.KP1} implies that
  $$
  \|B(u,v)\|_{H^{s_0}}\le C\|u\|_{H^{s_0}}\|v\|_{H^{s_0}}
  $$
  and property c) of Assumption II is also verified.
\par
 Thus, for the existence of $C^{1+\eb}$-smooth IM for such
    nonlinearities it is sufficient to verify the existence of an absorbing ball in the space $H^s$.
     This will be discussed in the
     next section.
\end{example}

\section{Applications}\label{s6}
In this section we consider the applications of our general theory to several classes of equations. Note that
 the regularity and spatial averaging assumptions for the nonlinearities considered are already verified in the previous section,
 so it only remains to check the dissipativity in the proper spaces.

 \subsection{Scalar Reaction-Diffusion equation}\label{ss7.1} Let us consider the equation
 \begin{equation}\label{7.RDS}
\Dt u=\Dx u-u-f(u)+g,\ \ u\big|_{t=0}=u_0, \ g\in L^2((-\pi,\pi)^3)
 \end{equation}
endowed with the periodic boundary conditions. This equation is of the form \eqref{5.maineq} with
 $\gamma=0$ and $Au:=-\Dx u+u$. Let us pose the following conditions on the scalar function $f$:
 \begin{equation}\label{7.f}
\begin{cases}
1.\ f\in C^4(\R,\R),\\
2.\ f(u)u\ge -C,\ \ u\in\R,\\
3.\ f'(u)\ge-K,\ \ u\in\R.
\end{cases}
 \end{equation}
Then, as well-known (see, e.g., \cite{BV92,S10,T97}),  problem \eqref{7.RDS} is well-posed in the phase
 space $H=L^2((-\pi,\pi)^2)$ and generates a dissipative semigroup $\bar S(t):H\to H$:
 \begin{equation}\label{7.dis}
\|u(t)\|_{H}\le C\|u(0)\|_{H}e^{-\kappa t}+C(\|g\|_{H}+1).
 \end{equation}
 Moreover, the following smoothing property holds:
 \begin{equation}
\|u(t)\|_{H^2}\le C\frac{t+1}t \(e^{-\kappa t}\|u_0\|_{H}+\|g\|_H+1\)
 \end{equation}
 and therefore the invariant bounded absorbing set $\Cal B\subset H^2$ exists.
\par
However, we need a bit more regularity, namely, the absorbing ball in the space $H^s$ with $3<s<4$.
 To get this, we either need to require $g\in H^2$ which looks a bit restrictive or to use the
  standard  trick with introducing the auxiliary function $G\in H^2$ as a solution of the
  following elliptic problem
  \begin{equation}
\Dx G-G+g=0.
  \end{equation}
  Obviously, the solution of this problem exists and introducing the new variable $v=u-G$, we get
   the equivalent equation for $v$:
  \begin{equation}
\Dt v=\Dx v-v-f(v+G).
  \end{equation}
  The advantage of this equation is that, due to the fact that $H^2$ is algebra, we have the control of the
  $H^2$-norm of $f(v+G)$ on the $H^2$-absorbing ball and then from linear parabolic smoothing property,
  we get the desired absorbing ball for $v$ in the space $H^{4-\eb}$ for all $\eb>0$ which is enough
   for our purposes. Since the shift $u\to u+G$ does not affect the Assumptions I-III for the
    nonlinearity $f$
   in Example \ref{Ex6.RDE}, we end up with the following result.

   \begin{corollary}\label{Co7.RDS} Let $g\in H$ and the nonlinearity $f$ satisfy assumptions
    \eqref{7.f}. Then, there are infinitely many values of $N$, such that the
   reaction-diffusion equation \eqref{7.RDS} possesses  $C^{1+\eb_N}$-smooth IM in the sense of
   Definition \ref{Def5.IMg}. The truncated nonlinearity can be chosen in the form of \eqref{5.comp}
   with exponents $s_0$ and $s$ satisfying \eqref{6.smoothness}.
   \end{corollary}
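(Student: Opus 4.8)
The statement will follow by applying Corollaries~\ref{Cor5.IM} and~\ref{Cor5.IMsm} to equation~\eqref{7.RDS}, viewed as an instance of~\eqref{5.maineq} with $\gamma=0$ and $A=1-\Dx$. The spatial averaging principle and the regularity Assumptions~I--III for the local nonlinearity $f$ have already been checked in Example~\ref{Ex6.RDE} for any pair $(s_0,s)$ obeying~\eqref{6.smoothness}, and this verification depends only on the structure of $f$ (its derivative being a pointwise multiplication operator on the $C^4$ function $f'(u)$), hence is unaffected by replacing $u$ by $u+G$ for a fixed $G\in H^2$. Consequently the only thing left to establish is the existence of an invariant bounded absorbing ball $\Cal B\subset H^s$ for some $3<s<4$ for the solution semigroup $\bar S(t)$ of~\eqref{7.RDS}.

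First I would recall the classical $L^2$-theory. Multiplying~\eqref{7.RDS} by $u$ and using condition~2 of~\eqref{7.f} together with $\|u\|_{H^1}^2\ge\|u\|_H^2$ and Gronwall's lemma yields the dissipative estimate~\eqref{7.dis}; global well-posedness in $H=L^2((-\pi,\pi)^3)$ is then standard, see~\cite{BV92,T97}. A second energy estimate, obtained by testing with $-\Dx u$ and using condition~3 to bound $-(f(u),\Dx u)=-\int f'(u)|\Nx u|^2\le K\|\Nx u\|_H^2$, followed by the interpolation $\|\Nx u\|_H^2\le\eb\|\Dx u\|_H^2+C_\eb\|u\|_H^2$ and the already-proven $L^2$-bound, produces a dissipative estimate in $H^1$; rewriting~\eqref{7.RDS} in the point-wise elliptic form $Au=g-\Dt u-f(u)$ and estimating $\Dt u$ exactly as in the proof of Proposition~\ref{Prop01.well} then upgrades this to a smoothing estimate into $H^2$. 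Hence an invariant bounded absorbing ball $\Cal B_2\subset H^2$ exists.

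To pass above $H^2$ while keeping $g$ only in $L^2$, I would use the shift trick outlined in the text: let $G\in H^2$ solve the elliptic problem $\Dx G-G+g=0$ and set $v:=u-G$, which satisfies the homogeneous equation $\Dt v=\Dx v-v-f(v+G)$. On the absorbing ball $\Cal B_2$ the $H^2$-norm of $u=v+G$ stays bounded; since $H^2((-\pi,\pi)^3)$ is a Banach algebra (because $2>3/2$) and $f\in C^4$, the composition $f(v+G)=f(u)$ is uniformly bounded in $H^2$ along such trajectories. Applying the linear parabolic smoothing estimate to $\Dt v+(1-\Dx)v=-f(u(t))$ with right-hand side bounded in $H^2$ then gives, for $t\ge1$, a uniform bound for $v(t)$ --- and hence for $u(t)$ --- in $H^{4-\eb}$ for every $\eb>0$. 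This yields the desired invariant bounded absorbing ball in $H^{4-\eb}$.

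Finally, choosing $\eb$ small enough that $3<4-\eb<4$, fixing $s_0$ and $s$ satisfying~\eqref{6.smoothness} with this $4-\eb$ in the role of $s$, fixing the truncation parameters $C_*,R,\bar R$ so that $\Cal B\subset H^{4-\eb}$ lies in the corresponding balls of $H^s$, $H^2$ and $H$ respectively, and defining the truncated nonlinearity $F$ via~\eqref{5.comp}, all hypotheses of Corollaries~\ref{Cor5.IM} and~\ref{Cor5.IMsm} are satisfied; these corollaries then deliver, for infinitely many $N$, a $C^{1+\eb_N}$-smooth $N$-dimensional IM for~\eqref{7.RDS} in the sense of Definition~\ref{Def5.IMg}, with $F$ of the stated form. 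The one genuinely non-routine point in this argument is the bootstrap past $H^2$ from an $L^2$ external force; it is handled precisely by the $G$-shift together with the algebra property of $H^2$, so I do not expect any essential obstacle beyond standard parabolic estimates.
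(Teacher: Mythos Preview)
Your proof is correct and follows essentially the same approach as the paper: the dissipativity and $H^2$-smoothing are taken as standard, and the passage to $H^{4-\eb}$ is achieved via the $G$-shift combined with the algebra property of $H^2$ and linear parabolic smoothing, exactly as in the text preceding the corollary. You simply fill in a few more details on the $H^1$ and $H^2$ estimates that the paper delegates to references.
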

 \begin{remark} Of course, this result is well-known and has been first obtained in the
 pioneering paper \cite{M-PS88}, see also \cite{KZ15,Z14} for the smoothness of the manifold.
  In fact, $C^4$-smoothness
  here also can be relaxed, but we start from this example just in order to show that our
   general theory covers
   this classical result.
 \end{remark}

\subsection{Cahn-Hilliard type equations}\label{ss7.2} We now turn to more interesting problem related with
generalizations of the Cahn-Hilliard equation:
\begin{equation}\label{7.CH}
\Dt u+(-\Dx)^\gamma(-\Dx u+f(u)+g)=0,\ \ u\big|_{t=0}=u_0
\end{equation}
in $\Omega=(-\pi,\pi)^3$ endowed with periodic boundary conditions. The case $\gamma=1$ corresponds to
 the classical Cahn-Hilliard equation, see \cite{T97,EFZ04,MZ08} and references therein for more details. The case $0<\gamma<1$ is
  the so-called fractional Cahn-Hilliard equation which is of a big current interest, see \cite{ASS16} and
   references therein. The other choices of $\gamma>0$ are also interesting, for instance, $\gamma=2$ corresponds to the
    so-called $6$th order Cahn-Hilliard equation, see \cite{Mir14} and references therein.
\par
This equation has a natural (mass) conservation law:
$$
\frac d{dt}\<u(t)\>=0,
$$
so, without loss of generality, we may assume that $\<u\>=0$ and consider $A=-\Dx$ in the space
$H=\{u\in L^2((-\pi,\pi)^3),\ \<u\>=0\}$. Then zero eigenvalue disappears and the operator $A$ becomes positive
 definite. We also assume that $g\in H$. Then equation \eqref{7.CH} is equivalent to the following one:
 \begin{equation}\label{7.FCH}
\Dt (-\Dx)^{-\gamma}u-\Dx u+f(u)-\<f(u)\>+g=0,
 \end{equation}
 so the equation is indeed in the form of \eqref{5.maineq}. We pose exactly the same conditions \eqref{7.f}
  to the nonlinearity $f$. Then, verification of Assumptions I-III is also exactly the same as
  for the case of the reaction-diffusion equation (with the same values of $s_0$ and $s$) since the presence of the extra one-dimensional
  term $\<f(u)\>$ changes nothing. Thus, we only need to check the existence of the absorbing ball in $H^s$.
  Moreover, we only need this absorbing ball in $H^2$ since the further regularity can be obtained in
  a straightforward way using the linear parabolic regularity estimates
  (similarly to the case of a reaction-diffusion equation). So, we will briefly discuss below only the
  well-posedness and $H^2$-regularity and dissipativity of solutions of \eqref{7.FCH} in $H^2$. This is a
   straightforward generalization of the standard Cahn-Hilliard theory (for $\gamma=1$),
   see \cite{T97,EFZ04,MZ08} for more details.
\par
Similarly to the classical CH-equation, the natural phase space for problem \eqref{7.FCH}
is $H^{-\gamma}$ since exactly in this case we may utilize the monotonicity of the nonlinearity $f$ and get
 nice estimates in the same way as in Proposition \ref{Prop01.well}. Indeed, multiplying
 equation \eqref{7.FCH} on $u$, integrating over $x$ and using that $f(u).u\ge-C$, we get the
  following analogue of dissipative estimate \eqref{01.dis}:
  \begin{equation}\label{7.fu}
  \|u(t)\|^2_{H^{-\gamma}}+\int_t^{t+1}\|u(s)\|^2_{H^1}+|f(u(s)).u(s)|\,ds\le
   Ce^{-\kappa t}\|u_0\|^2_{H^{-\gamma}}+C(1+\|g\|_H^2)
  \end{equation}
for some positive constants $C$ and $\kappa$.
\par
   Moreover, writing the equation for differences $v(t)=u_1(t)-u_2(t)$
  of two solutions, multiplying it by $v(t)$, using that $f'(u)\ge-K$, and arguing exactly as in the proof
  of Proposition \ref{Prop01.well}, we get the global Lipschitz continuity \eqref{01.lip}. The existence
   of a solution can be obtained in a standard way using, e.g., the Galerkin approximations,
   see \cite{BV92,T97}.
  Thus, we have verified the global well-posedness and dissipativity of the solution
   semigroup $\bar S(t):H^{-\gamma}\to H^{-\gamma}$ associated with the equation \eqref{7.CH}.
\par
Let us now discuss the smoothing property. First, multiplying equation \eqref{7.FCH} by $t\Dt u$, we get
\begin{equation}\label{7.grad}
\frac d{dt}\(\frac12 t\|u\|^2_{H^1}+t(\Phi(u),1)+t(g,u)\)+t\|\Dt u(t)\|^2_{H^{-\gamma}}=
\frac12 \|u\|^2_{H^1}+(\Phi(u),1)+(g,u),
\end{equation}
where $\Phi(z):=\int_0^zf(s)\,ds$. Using the elementary inequality
$$
\Phi(u)\le f(u).u+\frac K2|u|^2
$$
and \eqref{7.fu} for estimating the terms in the right-hand side of \eqref{7.grad}, we end up with
\begin{equation}\label{7.h1}
\|u(t)\|^2_{H^1}+\int_t^{t+1}\|\Dt u(s)\|^2_{H^{-\gamma}}\,ds\le C\frac{t+1}{t}\(e^{-\kappa t}\|u_0\|^2_{H^{-\gamma}}+1+\|g\|^2_H\)
\end{equation}
for some positive $C$ and $\kappa$. This estimate gives us the absorbing ball for the semigroup
 $\bar S(t)$ in $H^1$, but to get the desired $H^2$-smoothing, we need more steps.

   At the next step, we differentiate equation \eqref{7.FCH}
by $t$ and denote $\theta(t):=\Dt u(t)$. Then multiplying the result by $t^2\theta(t)$,
using the fact that $f'(u)\ge-K$ and the estimate for the integral norm of $\theta(t)$ obtained in \eqref{7.h1}
analogously to \eqref{01.in-sm}, we get
\begin{equation}\label{7.dt}
\|\theta(t)\|_{H^{-\gamma}}^2\le C \frac{t^2+1}{t^2}\(e^{-\kappa t}\|u(0)\|^2_{H^{-\gamma}}+1+\|g\|_H^2\)
\end{equation}
for some positive $C$ and $\kappa$.
\par
Finally, analogously to \eqref{01.el-l}, we write our problem as an elliptic problem
$$
\Dx u(t)-f(u(t))+\<f(u(t))\>=A^{-\gamma}\theta(t)+g
$$
and multiply this equation by $\Dx u(t)$ followed by integration over $x$. Then, using the
obtained estimates \eqref{7.h1} and \eqref{7.dt} for $\theta(t)$ and the assumption $f'(u)\ge-K$, we arrive at
\begin{equation}
\|u(t)\|_{H^2}^2\le C \frac{t^2+1}{t^2}\(e^{-\kappa t}\|u(0)\|^2_{H^{-\gamma}}+1+\|g\|_H^2\)
\end{equation}
which gives us the desired existence of an absorbing ball in the space $H^2$. Since $H^2((-\pi,\pi)^3)$
is an algebra the further smoothing estimates are straightforward and we have proved the following result.

   \begin{corollary}\label{Co7.FCH} Let $g\in H$ and the nonlinearity $f$ satisfy assumptions
    \eqref{7.f}. Then, there are infinitely many values of $N$, such that the
   Cahn-Hilliard type  equation \eqref{7.CH} possesses  $C^{1+\eb_N}$-smooth IM in the sense of
   Definition \ref{Def5.IMg}. The truncated nonlinearity can be chosen in the form of \eqref{5.comp}
   with an extra term $\<f(W(u))\>$
   and  exponents $s_0$ and $s$ satisfying \eqref{6.smoothness}.
   \end{corollary}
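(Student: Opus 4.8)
The plan is to put \eqref{7.CH} into the abstract form of Sections \ref{s2}--\ref{s4} and then simply assemble the hypotheses of Corollary \ref{Cor5.IMsm}. Working in the zero-mean space $H=\{u\in L^2((-\pi,\pi)^3):\<u\>=0\}$ with $A=-\Dx$ (now positive definite, so the troublesome zero eigenvalue disappears) and nonlinearity $f(u)-\<f(u)\>$, equation \eqref{7.CH} becomes exactly \eqref{5.maineq} with the given $\gamma>0$. The spatial averaging and regularity properties (Assumptions I--III) for this nonlinearity have already been verified in Example \ref{Ex6.CH}: the rank-one correction $\<f(u)\>$ is harmless, and one may take $s_0\in(3/2,2)$ and $s\in(s_0+3/2,s_0+2)\subset(3,4)$, so in particular $s>2$ as required by Corollary \ref{Cor5.IM}. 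Thus the only thing still to supply is the dynamical input: global well-posedness of \eqref{7.CH} and an invariant bounded absorbing set $\Cal B$ which is bounded in $H^s$.

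For well-posedness and dissipativity in the natural phase space $H^{-\gamma}$ I would follow Proposition \ref{Prop01.well} line by line. Testing \eqref{7.FCH} with $u$ and using $f(u)u\ge-C$ gives the $H^{-\gamma}$ dissipative estimate together with integrability of $\|u\|_{H^1}^2$ and $|f(u)u|$; testing the difference equation with $v=u_1-u_2$, using $f'(u)\ge-K$ and the standard interpolation inequality yields global Lipschitz continuity in $H^{-\gamma}$; existence of solutions comes from the usual Galerkin scheme. This produces the dissipative semigroup $\bar S(t)$ on $H^{-\gamma}$.

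Next one bootstraps the regularity. Multiplying \eqref{7.FCH} by $t\Dt u$ and using $\Phi(u)\le f(u)u+\tfrac K2|u|^2$ (with $\Phi(z)=\int_0^z f$) gives a time-weighted $H^1$ bound; differentiating in time, setting $\theta=\Dt u$, multiplying by $t^2\theta$ and again using $f'(u)\ge-K$ gives $\|\theta(t)\|_{H^{-\gamma}}$; finally rewriting the equation pointwise in $t$ as the elliptic problem $\Dx u-f(u)+\<f(u)\>=A^{-\gamma}\theta+g$ and testing with $\Dx u$ produces the $H^2$ estimate, hence an absorbing ball in $H^2$. Since $H^2((-\pi,\pi)^3)$ is a Banach algebra, $f(u)$ and $\<f(u)\>$ stay bounded in $H^2$ along this ball, so the linear parabolic smoothing for $\Dt A^{-\gamma}u+Au=h$ bootstraps up to $H^{4-\eb}$ for any $\eb>0$ (using, as in Subsection \ref{ss7.1}, an auxiliary elliptic corrector to absorb the contribution of $g\in H$); one then obtains an absorbing ball bounded in $H^s$ for our $s\in(3,4)$, which is made invariant by the standard argument. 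With $C_*$, $R$, $\bar R$ chosen to dominate the $H^s$-, $H^2$- and $H$-norms on $\Cal B$, building the truncated nonlinearity $F$ via \eqref{5.comp} together with the extra term $\<f(W(u))\>$, and noting $F(u)=f(u)-\<f(u)\>$ on $\Cal B$, Corollary \ref{Cor5.IMsm} (packaging Theorem \ref{Th5.main-a}, Corollary \ref{Cor5.IM} and Theorem \ref{Th02.IMsm}) gives, for infinitely many $N$, an $N$-dimensional $C^{1+\eb_N}$-smooth IM in the sense of Definition \ref{Def5.IMg}.

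The genuinely delicate point is the chain of dissipative smoothing estimates from $H^{-\gamma}$ up to $H^s$ with $s$ slightly below $4$: the factor $A^{-\gamma}$ in front of $\Dt u$ together with the low regularity $u_0\in H^{-\gamma}$ forces the time-weighted ($t$, $t^2$, \dots) bookkeeping familiar from Cahn-Hilliard theory, the elliptic reformulation needs some care because of the nonlocal $\<f(u)\>$ term, and the final bootstrap step requires the $g$-corrector trick since $g$ is only assumed to lie in $H$. Everything else reduces to quoting Example \ref{Ex6.CH} and the abstract machinery already developed.
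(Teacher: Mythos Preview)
Your proposal is correct and follows essentially the same route as the paper: reduce \eqref{7.CH} to the abstract model \eqref{5.maineq} on the zero-mean space, quote Example \ref{Ex6.CH} for Assumptions I--III, and then supply the absorbing ball in $H^s$ with $3<s<4$ via exactly the sequence of estimates you describe (energy estimate in $H^{-\gamma}$, the $t\Dt u$ and $t^2\theta$ multipliers for $H^1$ and $\|\Dt u\|_{H^{-\gamma}}$, the elliptic rewriting tested against $\Dx u$ for $H^2$, and then the algebra-plus-linear-smoothing bootstrap with the $g$-corrector trick from Subsection \ref{ss7.1}). The paper's argument is the same in structure and in the choice of multipliers; your identification of the time-weighted smoothing chain as the only genuinely delicate step matches the paper's emphasis.
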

\begin{remark}\label{Rem7.CH} For the case $\gamma=1$ which corresponds to the classical
Cahn-Hilliard equation, this result has
 been established in \cite{KZ15}. However, for other values of $\gamma>0$ this result
 seems new. One of the main achievements of our approach is that we can treat all the
  cases $\gamma>0$ as well as $\gamma=0$
  from the unified point of view.
\end{remark}

\subsection{Modified Navier-Stokes equations}\label{ss7.3} We now turn to the other class of examples related
with Navier-Stokes equations which also fits our general theory. Namely, we will consider
the following class of modified 3D Navier-Stokes equations:
\begin{equation}\label{7.NS}
\begin{cases}
\Dt u+(u,\Nx\bar u)+(-\Dx)^{1+\gamma} u+\Nx p=g, \ \ u\big|_{t=0}=u_0,\\
\divv u=0,\ \ \bar u=(1-\alpha\Dx)^{-\bar\gamma}u.
\end{cases}
\end{equation}
The case $\gamma=\bar\gamma=0$ corresponds to the classical 3D Navier-Stokes problem. However, the
global well-posedness
 of this problem is out of reach of the modern theory and is actually one of the Millennium
 problems, see \cite{Fef06}
 and references therein, so some modifications/regularisations look unavoidable. Introducing the
  truncated variable $\bar u$ is in the spirit of Leray $\alpha$-regularization or
  the so-called Bardina model,
  see \cite{A13,BFR80,CHOT05} and references therein. The term $(-\Dx)^{1+\gamma}u$ with $\gamma>0$ gives an
  alternative popular type of regularization - the so-called hyperviscous regularization of the
   Navier-Stokes problem, see \cite{HLT10,LL06,OT07}.
   \par
   The IM theory requires extra assumptions on the exponents $\gamma$ and $\bar\gamma$ in
   comparison with well-posedness. For instance, for the 2D case the classical Navier-Stokes equations are
   globally well-posed, but for the existence of an IM for periodic boundary conditions, we still need
   $\bar \gamma\ge\frac12$, see \cite{HGT15}, and the existence of an IM for $\bar\gamma<\frac12$ is
    still an open problem.
\par
For problem \eqref{7.NS} the borderline for the IM theory is given by the condition
\begin{equation}\label{7.Img}
\gamma+\bar\gamma=\frac12,\ \gamma\in[0,\frac12].
\end{equation}
As we will see, under this assumption, equation \eqref{7.NS} can be reduced to our abstract
equation \eqref{5.maineq} and the existence of an IM follows from the general theory. By this reason,
in order
 to avoid technicalities, we restrict ourselves to consider the case of equality
  \eqref{7.Img} only. The case
  when $\gamma+\bar\gamma>\frac12$ is similar (but simpler since we have the extra regularity
   for the nonlinearity) and also fits our theory, but the case $\gamma+\bar\gamma <\frac12$ is
   out of reach of the
    theory and remains an open problem.
\par
 To embed this problem into a general theory developed above, we take as in Example \ref{Ex6.NS} the
 space $H$ as the space of divergent free vector fields defined by \eqref{6.divfree} and rewrite
 \eqref{7.NS} in the equivalent form
 \begin{equation}\label{7.NSmod}
A^{-\gamma}\Dt u+Au+A^{-\gamma}P[(u,\Nx)(1-\alpha\Dx)^{-\bar\gamma}u]=A^{-\gamma}g,\ \ u\big|_{t=0}=u_0,
 \end{equation}
 where $P$ is a Leray projector to the divergent free vector fields and $A$ is a Stokes
 operator which coincides in the
 case of periodic boundary conditions with the restriction of the minus Laplacian to
 the space $H$. We also assume that $g\in H$.
 \par
Equation \eqref{7.NSmod} has the form of \eqref{5.maineq} with the nonlinearity \eqref{6.NSnon}
considered in Example \ref{Ex6.NS}. As we have established there, this nonlinearity satisfies our
Assumptions I-III for the IM-existence theorem with exponents $s$ and $s_0$ satisfying
 \eqref{6.smoothness}. Thus, in order to get the desired existence of $C^{1+\eb}$ IM for
 problem \eqref{7.NS}, we only need to verify the well-posedness in the proper phase space
  (which in general need not
  to coincide with $H^{-\gamma}$) and the existence of an absorbing set, bounded in $H^s$ for some $3<s<4$.
\par
The well-posedness and regularity theory for the Navier-Stokes type equations of the form \eqref{7.NS}
is also well-understood nowadays, so we will restrict
 ourselves only to a brief exposition indicating the key features,
 see \cite{HLT10,ILT06,LL06,OT07} and references therein for more details.
 \par
 The natural phase space for problem \eqref{7.NS} is $H^{-\bar\gamma}$. This is related with the fact
  that the analogue of the energy estimate holds exactly in this space. Indeed, as known
  $$
  ((u,\Nx v),v)\equiv 0,\ \ u,v\in H^1,
  $$
  so the multiplication of  equation \eqref{7.NS} by $\bar u=(1-\alpha\Dx )^{-\bar\gamma}u$ gives
  \begin{equation}\label{7.en-NS}
  \frac12\frac d{dt}\|(1-\alpha\Dx)^{-\bar\gamma/2}u\|^2_H+
  \|(1-\alpha\Dx)^{-\bar\gamma/2}u\|_{H^{1+\gamma}}^2=(g,(1-\alpha\Dx )^{-\bar\gamma}u)
  \end{equation}
and applying the Gronwall inequality to this relation, we end up with the desired dissipative estimate
 in $H^{-\bar\gamma}$:
 \begin{equation}\label{7.NS-dis}
\|u(t)\|_{H^{-\bar\gamma}}^2+\int_t^{t+1}\|u(s)\|^2_{H^{1+\gamma-\bar\gamma}}\,ds \le Ce^{-\kappa t}\|u_0\|^2_{H^{-\bar\gamma}}+C(1+\|g\|^2_H)
 \end{equation}
for some positive $\kappa$ and $C$. We gave only formal derivation of this estimate, but it can be easily justified, say,
 by the Galerkin method.
\par
The restriction for the uniqueness of a solution and further regularity reads
\begin{equation}\label{7.crit}
2\gamma+\bar\gamma\ge \frac12.
\end{equation}
Indeed, let us indicate how get the uniqueness under this assumption. Let $u_1(t)$ and $u_2(t)$ be two
solutions of \eqref{7.NS} and let $v(t)=u_1(t)-u_2(t)$. Then, writing the equation on $v(t)$ and
 multiplying it by $\bar v:=(1-\alpha\Dx)^{-\bar\gamma}v$, we end up with
 \begin{multline}\label{7.unique}
\frac12\frac d{dt}\|(1-\alpha\Dx)^{-\bar\gamma/2}v\|^2_H+
\|(1-\alpha\Dx)^{-\bar\gamma/2}v\|^2_{H^{1+\gamma}}=\\=
-((v,\Nx)(1-\alpha\Dx)^{-\bar\gamma}u_1,(1-\alpha\Dx)^{-\bar\gamma}v).
 \end{multline}
As an elementary exercise on Sobolev's embeddings and H\"older inequality, one gets
$$
|((v,\Nx)(1-\alpha\Dx)^{-\bar\gamma}u_1,(1-\alpha\Dx)^{-\bar\gamma}v)|\le C\|v\|_{H^{1+\gamma-\bar\gamma}}
\|u_1\|_{H^{1+\gamma-\bar\gamma}}\|v\|_{H^{-\bar\gamma}}
$$
if the criticality assumption \eqref{7.crit} is satisfied (we left the details
 of this exercise to the reader). Inserting this estimate to \eqref{7.unique}, we arrive at
 \begin{equation}
\frac12\frac d{dt}\|(1-\alpha\Dx)^{-\bar\gamma/2}v\|^2_H\le C\|u_1(t)\|_{H^{1+\gamma-\bar\gamma}}^2
\|(1-\alpha\Dx)^{-\bar\gamma/2}v\|_H^2
 \end{equation}
and the Gronwall inequality applied to this relation together with the control of the proper norm
 of $u_1$ obtained in \eqref{7.NS-dis} gives the desired uniqueness.  Note that condition \eqref{7.crit}
  is weaker than our assumption \eqref{7.Img}, so we have verified that equation \eqref{7.NS} generates
   a dissipative semigroup $\bar S(t): H^{-\bar\gamma}\to H^{-\bar\gamma}$.
\par
Moreover, we see that under the condition \eqref{7.Img}, the considered equation has a critical
nonlinearity only in the case $\gamma=0$ and $\bar\gamma=\frac12$. The parabolic smoothing property for this case
is discussed in details in \cite{LS20} (see also \cite{GG18} for other end-point
 case $\gamma=\frac12$, $\bar\gamma=0$), so we will not present this analysis here. The case $\gamma>0$
  is {\it sub-critical} and the further regularity and the existence of absorbing balls in smoother
   spaces are standard corollaries of the {\it linear} parabolic smoothing estimates and
    bootstrapping arguments. So, the actual smoothness of the solution is restricted by the
     smoothness of the
     nonlinearity and external forces only. Under our assumption $g\in H$, we may guarantee
      the existence of
      the absorbing ball in $H^2$ only, but the trick with subtraction of an equilibrium described
       in subsection \ref{ss7.1} allows us to get the desired absorbing ball in $H^s$ with $s<4$. Thus, we
       have proved the following result.

   \begin{corollary}\label{Co7.NS} Let $g\in H$ and the exponents $\gamma$ and $\bar\gamma$
    satisfy \eqref{7.Img}. Then, there are infinitely many values of $N$, such that the
   Navier-Stokes type  equation \eqref{7.NS} possesses  $C^{1+\eb_N}$-smooth IM in the sense of
   Definition \ref{Def5.IMg}. The truncated nonlinearity can be chosen in the form
    of \eqref{5.naive} and \eqref{6.NSnon}
   and  exponents $s_0$ and $s$ satisfying \eqref{6.smoothness}.
   \end{corollary}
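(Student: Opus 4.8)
The plan is to recognize \eqref{7.NS} as a special case of the abstract model \eqref{5.maineq} and then to invoke the inertial-manifold machinery developed in Sections \ref{s2}--\ref{s4}; everything those results require and that has not already been established reduces to two items: a global well-posedness and dissipativity statement for the original (non-truncated) equation, and the existence of an invariant absorbing set bounded in $H^s$ for some $3<s<4$. Applying $A^{-\gamma}$ and the Leray projector to \eqref{7.NS} rewrites it in the form \eqref{7.NSmod}, which is exactly \eqref{5.maineq} with the Navier--Stokes nonlinearity $f(u)=P(-\Dx)^{-\gamma}[(u\cdot\Nx)(1-\alpha\Dx)^{-\bar\gamma}u]$ of Example \ref{Ex6.NS}. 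Under \eqref{7.Img} this nonlinearity is of zero order, its spatial average vanishes identically since $H$ consists of divergence-free vector fields with zero mean, and Assumptions I--III for it were already verified in Example \ref{Ex6.NS} for $s_0$ and $s$ satisfying \eqref{6.smoothness}. Hence Theorem \ref{Th5.a0} --- the vanishing-average case, for which the simple truncation \eqref{5.naive}, $F(u)=f(W(u))$, already yields the cone property --- applies, and it remains only to check its hypotheses on the semigroup $\bar S(t)$.

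For well-posedness and dissipativity I would work in the natural energy space $H^{-\bar\gamma}$. Multiplying \eqref{7.NS} by $\bar u=(1-\alpha\Dx)^{-\bar\gamma}u$ and using $((u,\Nx v),v)=0$ gives the identity \eqref{7.en-NS}, to which Gronwall's lemma applies to produce the dissipative estimate \eqref{7.NS-dis} in $H^{-\bar\gamma}$, together with an integrated bound on $\|u\|^2_{H^{1+\gamma-\bar\gamma}}$. Uniqueness and Lipschitz dependence then follow by writing the equation for $v=u_1-u_2$, testing against $(1-\alpha\Dx)^{-\bar\gamma}v$, and estimating the trilinear term by $C\|v\|_{H^{1+\gamma-\bar\gamma}}\|u_1\|_{H^{1+\gamma-\bar\gamma}}\|v\|_{H^{-\bar\gamma}}$ via Sobolev embeddings and H\"older's inequality --- an estimate valid precisely under the criticality condition \eqref{7.crit}, which is automatic under \eqref{7.Img} --- followed by a second application of Gronwall using the control of $u_1$ from \eqref{7.NS-dis}. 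Existence of solutions is obtained by the Galerkin scheme, exactly as for the classical regularized Navier--Stokes equations. This produces the dissipative semigroup $\bar S(t):H^{-\bar\gamma}\to H^{-\bar\gamma}$.

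The absorbing ball in $H^s$ is the only place where the argument genuinely splits. When $\gamma>0$ the nonlinearity is subcritical, so from \eqref{7.NS-dis} and the linear parabolic smoothing estimates for $A^{1+\gamma}$ a standard bootstrap produces an absorbing ball in $H^2$; since $g\in H$ alone does not allow one to pass above $H^2$ directly, I would use the equilibrium-subtraction trick of subsection \ref{ss7.1} --- replacing $u$ by $u-G$ for a time-independent $G$ solving the stationary linear part, so that the shifted nonlinearity keeps its form, $H^2$ remains an algebra, and one further round of linear smoothing yields an absorbing ball in $H^s$ for every $s<4$. The two endpoints require separate, already available, input: the genuinely critical Leray-$\alpha$-Bardina case $\gamma=0$, $\bar\gamma=\frac12$ is treated in \cite{LS20}, and the purely hyperviscous endpoint $\gamma=\frac12$, $\bar\gamma=0$ in \cite{GG18}. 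In every case one arrives at an invariant absorbing set $\Cal B$ bounded in $H^s$ with $3<s<4$.

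Finally, fixing the truncation constants $C_*$, $R$, $\bar R$ so that $\Cal B$ is contained in the corresponding balls of $H^s$, $H^2$ and $H$ makes \eqref{5.naive} coincide with $f$ on $\Cal B$; with the notation of Definition \ref{Def5.IMg} one takes $\Phi=H^{-\bar\gamma}$, $\Psi=H^{-\gamma}$, $E=\operatorname{Id}$, and the corollary of Theorem \ref{Th5.a0} delivers an $N$-dimensional IM for infinitely many $N$. The $C^{1+\eb_N}$-smoothness comes exactly as in Corollary \ref{Cor5.IMsm}: the estimates \eqref{5.F-dif} and \eqref{5.ex-sm} of Lemma \ref{Lem5.smooth} hold verbatim for the simpler truncation \eqref{5.naive} --- which contributes only the term $f'(W(u))W'(u)$ --- and the requirements $s_0<2$, $s_0<s<s_0+2$ hold by \eqref{6.smoothness}. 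The only genuinely delicate step in the whole argument is the regularity theory of \eqref{7.NS} at the critical endpoint $\gamma=0$, $\bar\gamma=\frac12$, which is why the proof defers there to \cite{LS20}; in the intermediate range $0<\gamma<\frac12$, where the new content lies, everything above is standard and the abstract theory does the rest.
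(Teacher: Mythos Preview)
Your proposal is correct and follows essentially the same route as the paper: rewrite \eqref{7.NS} as \eqref{7.NSmod}, invoke Example \ref{Ex6.NS} for Assumptions I--III with vanishing spatial average (hence the simple truncation \eqref{5.naive} and Theorem \ref{Th5.a0}), derive the energy estimate \eqref{7.en-NS} and uniqueness under \eqref{7.crit} in $H^{-\bar\gamma}$, bootstrap to an $H^s$ absorbing ball via subcriticality for $\gamma>0$ together with the equilibrium-subtraction trick, defer the critical endpoint $\gamma=0$ to \cite{LS20}, and conclude $C^{1+\eb}$-smoothness from Lemma \ref{Lem5.smooth} and Corollary \ref{Cor5.IMsm}. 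The only minor over-caution is treating the hyperviscous endpoint $\gamma=\frac12$, $\bar\gamma=0$ as requiring separate input: since $\gamma>0$ there, it falls under your subcritical bootstrap and the reference to \cite{GG18} is merely confirmatory, not essential.
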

\begin{remark}\label{Rem7.NS} The existence of a Lipschitz IM for \eqref{7.NS} in 2D-case with $\gamma=0$,
$\bar\gamma=1$ has been obtained in \cite{HGT15} based on verifying the spectral gap conditions. The
 IM in the
 3D case with $\gamma=0$ and $\bar\gamma=1$ has been constructed in \cite{K18} based on a  novel
 idea to use
 spatial averaging technique for Navier-Stokes type equations (in particular, the special form of the
  cut off function $W(u)$ which is crucial for this approach has been also suggested there).
  The end points $\gamma=0$, $\bar\gamma=\frac12$ and $\gamma=\frac12$, $\bar\gamma=0$ have been
   treated in
  \cite{LS20} and \cite{GG18} respectively. However, in the intermediate case
   $0<\gamma<\frac12$ our
   result seems new. In addition, to the best of our knowledge, the question about $C^{1+\eb}$-smoothness
    of the IMs for the modified Navier-Stokes equations has been never considered before. We also
    emphasize that all the previous partial results for IMS related with equation \eqref{7.NS} as well
     as the new ones are now obtained in a unified way as corollaries of a general theorem.
\end{remark}

\end{document}